\newtheorem{thm}{Theorem}[section]
\newtheorem{prop}[thm]{Proposition}
\newtheorem{lem}[thm]{Lemma}
\newtheorem{cor}[thm]{Corollary}
\theoremstyle{definition}
\newtheorem{defn}[thm]{Definition}
\newtheorem{rem}[thm]{Remark}
\newtheorem{prob}[thm]{Problem}
\newtheorem{cons}[thm]{Construction}
\newtheorem{setup}[thm]{Set up}
\newcommand{\abs}[1]{\lvert{#1}\rvert}
\newcommand{\bigabs}[1]{\bigl|{#1}\bigr|}
\renewcommand{\bar}[1]{\overline{#1}}
\newcommand{\bigset}[2]{ \bigl\{ \, {#1} \bigm| {#2} \, \bigr\} }
\renewcommand{\emptyset}{\varnothing}
\renewcommand{\setminus}{-}
\newcommand{\field}[1]{\mathbb{#1}}
\newcommand{\Z}{\field{Z}}
\newcommand{\R}{\field{R}}
\newcommand{\Q}{\field{Q}}
\newcommand{\N}{\field{N}}
\newcommand{\PP}{\field{P}}
\DeclareMathOperator{\CAT}{CAT}
\newcommand{\showcomments}{yes}
\newsavebox{\commentbox}
\begin{document}

\title{Distortion of surfaces in $3$--manifolds}

\author{Hoang Thanh Nguyen}
\address{Department of Mathematical Sciences\\
University of Wisconsin--Milwaukee\\
P.O.~Box 413\\
Milwaukee, WI 53201\\
USA}
\email{nguyen36@uwm.edu}

\date{\today}

\begin{abstract}
Let $g \colon S \looparrowright N$ be a properly immersed $\pi_1$--injective surface in a non-geometric $3$--manifold $N$.
We compute the distortion of $\pi_1(S)$ in $\pi_1(N)$ and show that how it is related to separability of $\pi_1(S)$ in $\pi_1(N)$. The only possibility of the distortion is linear, quadratic, exponential, and double exponential.
\end{abstract}

\subjclass[2000]{%
20F67, 
20F65}  
\keywords{graph manifold, mixed manifold, spirality, subgroup distortion}

\maketitle

\section{Introduction}
\label{sec:Introduction}

In geometric group theory, the distortion of a finitely generated subgroup $H$ in a finitely generated subgroup $G$ is a classical notion. Let $\mathcal{S}$ and $\mathcal{A}$ be finite generating sets of $G$ and $H$ respectively. The subgroup $H$ itself admits a word length metric, but it also inherits an induced metric from the group $G$. The distortion of $H$ in $G$ compares these metrics on $H$. In other words, we would like to know how the inclusion $H \hookrightarrow G$ preserves geometric properties of $H$. More precisely, the \emph{distortion} of $H$ in $G$ is the function
\[
   \Delta_{H}^G(n)
     = \max \bigset{\abs{h}_{\mathcal{A}}}{\text{$h\in{H}$ and $\abs{h}_{\mathcal{S}}\leq{n}$} }
\]
Up to a natural equivalence, the function $\Delta_{H}^{G}$ does not depend on the choice of finite generating sets $\mathcal{S}$ and $\mathcal{A}$.

This paper is devoted to understanding the large scale geometry of immersed surfaces in $3$--manifolds by using distortion of the surface group. In fact, the purpose is to address the following problem:
\begin{prob}
\label{prob:Wise}
Let $S \looparrowright N$ be a properly immersed $\pi_1$--injective surface in a $3$--manifold $N$. What is the distortion of $\pi_1(S)$ in $\pi_1(N)$? How does it relate to algebraic properties of $\pi_1(S) \le \pi_1(N)$, topological properties of the immersion and geometries of components in the JSJ decomposition? 
\end{prob}

Dani Wise observed that Problem~\ref{prob:Wise} is important in the study of cubulations of $3$--manifold groups. The goal of cubulation is to find a suitable collection of immersed surfaces and then study the action of the fundamental group of the $3$--manifold on the $\CAT(0)$ cube complex dual to the collection of immersed surfaces. Whenever the fundamental group acts properly and cocompactly, surfaces must be undistorted. 

A compact, orientable, irreducible $3$--manifold $N$ with empty or toroidal boundary is \emph{geometric} if its interior admits a geometric structure in the sense of Thurston. The answer to Problem~\ref{prob:Wise} is relatively well-understood in the geometric case. By Hass \cite{Hass87}, if $N$ is a Seifert fibered space then up to homotopy, the surface $S$ is either vertical (i.e, union of fibers) or horizontal (i.e, tranverses to fibers). In either case, $\pi_1(S)$ is undistorted in $\pi_1(N)$. If $N$ is a hyperbolic $3$--manifold, then by Bonahon-Thurston (\cite{Bonahon86}, \cite{Thurston79}) the distortion is linear when the surface is geometrically finite and the distortion is exponential when the surface is geometrically infinite.

By Geometrization Theorem, a non-geometric $3$--manifold can be cut into hyperbolic and Seifert fibered  ``blocks''  along a JSJ decomposition. It is called a \emph{graph manifold} if all the blocks are Seifert fibered, otherwise it is a \emph{mixed manifold}. 

An immersed surface $S$ in a non-geometric manifold $N$ is called \emph{properly immersed} if the preimage of $\partial N$ under the immersion is $\partial S$. Roughly speaking, if the surface $S$ is properly immersed $\pi_1$--injective in the non-geometric manifold $N$ then up to homotopy, the JSJ decomposition in the manifold into blocks induces a decomposition on the surface into \emph{pieces}. Each piece is carried in either a hyperbolic or Seifert fibered block. A piece in a Seifert fibered block is either vertical  or horizontal, and a piece in a hyperbolic block is either geometrically finite or geometrically infinite.
Yi Liu \cite{YiLiu2017} and Hongbin Sun \cite{Sun18} show that all information about virtual embedding can be obtained by examining  the \emph{almost fiber part} $\Phi(S)$, that is, the union of horizontal and geometrically infinite pieces. We remark that virtual embedding is equivalent to subgroup separability \cite{Scott78}, \cite{PrzytyckiWise14} (a subgroup $H \le G$ is called \emph{separable} if for any $g \in G-H$ there exists a finite index subgroup $K \le G$ such that $H \le K$ and $g \notin K$).

The following theorem is the main theorem in this paper which give a complete answer to Problem~\ref{prob:Wise}. The theorem states for ``clean surfaces'' which we discuss below, but we emphasize here that up to homotopy every properly immersed surface is also a clean surface.

\begin{thm}[Distortion of surfaces in non-geometric $3$--manifolds]
\label{thm:thesismain}
Let $g \colon S \looparrowright N$ be a clean surface in a non-geometric $3$--manifold $N$. Suppose that all Seifert fibered blocks of $N$ are non-elementary. Let $\Delta$ be the distortion of $\pi_1(S)$ in $\pi_1(N)$. There are four mutually exclusive cases:
\begin{enumerate}
    \item
    \label{item:thmmain1}
    If there is a component $S'$ of the almost fiber $\Phi(S)$ such that $S'$ contains a geometrically infinite piece and $\pi_1(S')$ is non-separable in $\pi_1(N)$ then $\Delta$ is double exponential.
\item
\label{item:thmmain2}
Suppose that $\phi(S)$ has no component satisfying (\ref{item:thmmain1}). If there is a component $S'$ of the almost fiber $\Phi(S)$ such that $S'$ contains a geometrically infinite piece then $\Delta$ is exponential.
\item 
Suppose that $\phi(S)$ has no component satisfying (\ref{item:thmmain1}) and (\ref{item:thmmain2}) (i.e, no component of $\phi(S)$ contains a geometrically infinite piece). If there is a component of the almost fiber $\Phi(S)$ containing two adjacent pieces then $\Delta$ is exponential if $\pi_1(S)$ is non-separable in $\pi_1(N)$ and $\Delta$ is quadratic if $\pi_1(S)$ is separable in $\pi_1(N)$.
\item In all other cases, $\Delta$ is linear.
\end{enumerate}
\end{thm}

We note that Theorem~\ref{thm:thesismain} generalizes the main theorem of Hruska-Nguyen in \cite{Hruska-Nguyen}. In the setting of a properly immersed surface in a graph manifold, Hruska and the author (\cite{Hruska-Nguyen}) show that when the surface is an almost fiber, i.e, horizontal, its distortion is always nontrivial. The distortion is quadratic if the fundamental group of the surface is separable in the fundamental group of the manifold and the distortion is exponential otherwise. 

For the definition of nonelementary Seifert fibered space, we refer the reader to Section~\ref{sec:manifolds}.  We note that the properly immersed condition of a surface is not general enough for the purpose of this paper since the almost fiber part $\Phi(S)$ is no longer a properly immersed surface in the $3$--manifold (in fact, it is typical that a boundary circle of the almost fiber part is mapped into a JSJ torus of $N$). We thus introduce the notion of \emph{clean surfaces} which generalizes the notion of properly immersed surfaces by allowing some boundary circles to be mapped into JSJ tori (see Definition~\ref{defn:clean surface}). Clean surfaces are general enough for the purpose of computing distortion in this paper (as the almost fiber part of a clean surface is again a clean surface and a properly immersed $\pi_1$--injective surface is also a clean surface).


We prove Theorem~\ref{thm:thesismain} by using the following strategy. 
We prove that the distortion of a clean surface $S$ in a non-geometric $3$--manifold $N$ depends only on the almost fiber part $\Phi(S)$ (see Theorem~\ref{thm:intro:distortionalmostfiber}) and then we compute the distortion of components of the almost fiber part $\Phi(S)$ in the manifold $N$ (see Theorem~\ref{thm:introductionUpperBound} and Theorem~\ref{thm:intro:graphmanifold}).

\begin{thm}
\label{thm:intro:distortionalmostfiber}
Let $g \colon S \looparrowright N$ be a clean surface in a non-geometric $3$--manifold $N$. We assume that every Seifert fibered block in $N$ is nonelementary. For each component $S_{i}$ of $\Phi(S)$, let $\delta_{S_{i}}$ be the distortion of $\pi_1(S_{i})$ in $\pi_1(N)$. Then the distortion of $H = \pi_1(S)$ in $G= \pi_1(N)$ satisfies
\[
f \preceq \Delta_{H}^{G} \preceq \bar{f}
\]
where
\[
f(n) := \max \bigset{\delta_{S_{i}}(n)}{\text {$S_{i}$ is a component of $\Phi(S)$}}
\] and $\bar{f}$ is the superadditive closure of $f$.
\end{thm}
For the definition of superadditive closure function, we refer the reader to Section~\ref{sec:preli}. We remark that a similar result was proved by Hruska for relatively hyperbolic groups (see Theorem~1.4 in \cite{Hruska2010}), but the conclusion here is stronger because in many cases $\pi_1(S)$ and $\pi_1(N)$ don't satisfy the hypothesis in Theorem~1.4 \cite{Hruska2010}. 

 In \cite{RW98}, Rubinstein-Wang introduce a combinatorial invariant called ``spirality'' and show that it is the obstruction to separability for horizontal surfaces in graph manifolds. Recently, Liu \cite{YiLiu2017} generalizes the work of Rubinstein--Wang to closed surfaces in closed non-geometric $3$--manifolds and Sun \cite{Sun18} generalizes the work of Liu to arbitrary finitely generated subgroups in arbitrary non-geometric $3$--manifolds.
In the setting of a clean almost fiber surface in a graph manifold, the following theorem follows immediately from the work of Hruska--Nguyen \cite{Hruska-Nguyen} and the theorems of Liu \cite{YiLiu2017} and Sun \cite{Sun18}.

\begin{thm}
\label{thm:intro:graphmanifold}
Let $S \looparrowright N$ be a clean almost fiber surface (i.e, $\Phi(S) =S$) in a graph manifold $N$. We assume that all Seifert fibered blocks of $N$ is non-elementary. Let $\Delta$ be the distortion of $\pi_1(S)$ in $\pi_1(N)$. Then
\begin{enumerate}
    \item $\Delta$ is linear if each component of the almost fiber part contains only one horizontal piece.
    \item
    Otherwise, $\Delta$ is quadratic if $\pi_1(S)$ is separable in $\pi_1(N)$, and exponential if $\pi_1(S)$ is non-separable in $\pi_1(N)$.
\end{enumerate}
\end{thm}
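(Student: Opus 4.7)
The plan is to derive Theorem~\ref{thm:intro:graphmanifold} as a direct consequence of three inputs already in place: the component-wise reduction in Theorem~\ref{thm:intro:distortionalmostfiber}, the Hruska--Nguyen distortion computation for a connected horizontal surface in a graph manifold, and the Liu--Sun characterization of $\pi_1$--separability via the vanishing of a spirality invariant.

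First, since the hypothesis $\Phi(S)=S$ says that every piece of $S$ is horizontal, Theorem~\ref{thm:intro:distortionalmostfiber} applies and gives $f \preceq \Delta \preceq \bar{f}$, where $f(n)=\max_i \delta_{S_i}(n)$ as $S_i$ ranges over the components of $S$. The proof reduces to computing each $\delta_{S_i}$ and then verifying that the resulting class (linear, quadratic, or exponential) is preserved under both taking the maximum over finitely many components and the superadditive closure. The latter is automatic since all three candidate classes are superadditive (or have superadditive closure in the same class).

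For a component $S_i$ containing only one horizontal piece, $g$ carries $S_i$ into a single Seifert fibered block $M$, so $\pi_1(S_i)$ corresponds to a finite cover of $M$ and is undistorted in $\pi_1(M)$ by a result of Hass. The vertex group $\pi_1(M)$ is in turn undistorted in $\pi_1(N)$, which is a standard fact about graph manifolds from the JSJ decomposition (quasiconvex action on the Bass--Serre tree). Composing, $\delta_{S_i}$ is linear, and case (1) follows. For a component $S_i$ containing two or more horizontal pieces, I apply the main theorem of \cite{Hruska-Nguyen}, which, as remarked earlier in the paper, extends from properly immersed to clean horizontal surfaces: $\delta_{S_i}$ is quadratic when $\pi_1(S_i)$ is separable in $\pi_1(N)$, and exponential otherwise.

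To tie the component-by-component separability to the hypothesis on $\pi_1(S)$ as a whole, I invoke Liu--Sun: for a clean almost fiber surface in a non-geometric $3$--manifold, separability of $\pi_1(S)$ in $\pi_1(N)$ is detected by the vanishing of the spirality character, and since spirality is a local invariant of each component $S_i$ of $\Phi(S)$, the separability of $\pi_1(S)$ in $\pi_1(N)$ is equivalent to the simultaneous separability of every $\pi_1(S_i)$ in $\pi_1(N)$. Plugging this back into $f \preceq \Delta \preceq \bar{f}$, when $\pi_1(S)$ is separable every $\delta_{S_i}$ is at most quadratic with at least one quadratic, so $\Delta$ is quadratic; when $\pi_1(S)$ is non-separable some $\delta_{S_i}$ is exponential, forcing $\Delta$ exponential, while the Hruska--Nguyen upper bound keeps $\bar{f}$ exponential.

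The main obstacle I foresee is the Liu--Sun step: one must match the spirality invariant of \cite{YiLiu2017} and \cite{Sun18} with the separability obstruction used in Hruska--Nguyen component-by-component, and verify that spirality distributes across the components of a possibly disconnected clean almost fiber surface so that separability really does decouple. Once that bookkeeping is set up, the three inputs fit together immediately and the bounds on $\bar{f}$ produce the four cases of the statement.
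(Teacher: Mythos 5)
Your proof matches the paper's own, which in Section~\ref{section:putting results} combines the Hruska--Nguyen distortion computation for clean horizontal surfaces in graph manifolds (Lemma~\ref{lem:distortioningraphmanifold}) with Liu--Sun's identification of separability with trivial spirality (Remark~\ref{rem:SUNLIU}). Two small points: a clean surface is by definition connected (Definition~\ref{defn:clean surface}), so $\Phi(S)=S$ here has a single component and both the appeal to Theorem~\ref{thm:intro:distortionalmostfiber} and the worry in your last paragraph about spirality ``distributing across components'' are moot; and in the one-piece case, $\pi_1(S)$ is \emph{not} a finite-index subgroup of the Seifert block group $\pi_1(M)$ (a horizontal surface is a virtual fiber, hence infinite index), though it is indeed undistorted in $\pi_1(M)$, which is all your argument needs.
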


To give a complete proof to Theorem~\ref{thm:thesismain}, it remains to compute the distortion of a clean almost fiber surface in a mixed manifold (see Theorem~\ref{thm:introductionUpperBound}). This computation is one of the main components of this paper. We note that a fibered $3$--manifold can be expressed as a mapping torus for a diffeomorphism of the fiber surface. The strategy in the proof of Theorem~\ref{thm:introductionUpperBound} is inspired from Hruska-Nguyen \cite{Hruska-Nguyen} and Woodhouse \cite{Woodhouse16}. However the techniques are different because unlike the setting of a Seifert block where the diffeomorphism of the fiber surface is trivial and the distortion of the fiber surface in the Seifert block is linear, the diffeomorphism of the fiber surface in a hyperbolic block is pseudo-Anosov and the distortion of the fiber surface in the hyperbolic block is exponential. In addition, the generalized definition of spirality by Liu and Sun in a mixed manifold is more elaborate. We use the generalization of Liu and Sun to compute the distortion and show that the distortion is determined by separability of the surface subgroup.

\begin{thm}
\label{thm:introductionUpperBound}
Let $S \looparrowright N$ be a clean almost fiber surface (i.e, $\Phi(S) =S$) in a mixed manifold $N$. We assume that all Seifert fibered blocks of $N$ is non-elementary. Suppose that $S$ contains at least one geometrically infinite piece. Then the distortion of $\pi_1(S)$ in $\pi_1(N)$ is exponential if $\pi_1(S)$ is separable in $\pi_1(N)$, and double exponential if $\pi_1(S)$ is non-separable in $\pi_1(N)$.
\end{thm}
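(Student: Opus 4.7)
The plan is to establish matching upper and lower bounds, splitting the analysis according to whether $\pi_1(S)$ is separable in $\pi_1(N)$. I will rely on two key ingredients. First, inside its ambient block, a horizontal piece in a Seifert block is undistorted and a geometrically infinite piece in a hyperbolic block has exponential distortion (Bonahon--Thurston). Second, Liu--Sun's generalized spirality invariant detects non-separability: $\pi_1(S)$ is separable in $\pi_1(N)$ if and only if the spirality vanishes, and otherwise there is a multiplicative ratio $r>1$ governing how the ``length in a piece'' is amplified when one crosses a JSJ torus into an adjacent piece.

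For the upper bound, I would take $w\in\pi_1(S)$ with $|w|_{\pi_1(N)}\le n$ and decompose it along a path of length $O(n)$ in the Bass--Serre tree dual to the JSJ decomposition, producing syllables $w_i$ in vertex groups. In the separable case, since spirality vanishes, gluings across JSJ tori do not amplify piece-lengths beyond a bounded multiplicative constant. Each syllable in a hyperbolic block contributes at most $\exp(Cn)$ to the $S$-length (by the exponential distortion of the geometrically infinite piece), while Seifert block syllables contribute linearly. Summing over $O(n)$ syllables yields $|w|_{\pi_1(S)}\le O(n\exp(Cn))$, which is exponential. In the non-separable case, each of the $O(n)$ JSJ crossings multiplies the effective length in the next piece by at most $r$, while each geometrically infinite piece contributes an exponential blow-up of its own. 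The induction along the Bass--Serre path then gives a bound of the form $r^{C_1 n}\cdot\exp(C_2 n)\le \exp(\exp(C_3 n))$, a double exponential.

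For the lower bound, the exponential bound (valid in both cases) comes from a Bonahon--Thurston style construction inside one geometrically infinite piece: choose curves in the virtual fiber whose word length is linear in the hyperbolic block but exponential in the fiber surface. To realize the double exponential bound in the non-separable case, I would adapt the spiraling construction of \cite{Hruska-Nguyen} to the mixed manifold setting: concatenate a Bonahon--Thurston curve of exponential $S$-length with $\Theta(n)$ explicit spiraling transitions across JSJ tori, each forced by the nontrivial spirality ratio $r$ to multiply $S$-length by $r$. The resulting loop has $N$-length $O(n)$ and $S$-length at least $r^{\Theta(n)}\exp(\Theta(n))=\exp(\exp(\Theta(n)))$.

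The main obstacle will be the upper bound in the non-separable case. We must combine two independent sources of amplification---the intrinsic exponential distortion of geometrically infinite pieces in their hyperbolic blocks, and the multiplicative spiraling across JSJ tori controlled by Liu--Sun's invariant---and verify that together they compose merely to a double exponential, not something worse. This requires careful bookkeeping inside the generalized spirality framework, which for mixed manifolds is considerably more elaborate than its graph manifold analogue treated in \cite{Hruska-Nguyen}. A secondary difficulty is ensuring the lower bound construction actually realizes the full double-exponential rate; this means the explicit spiraling loops must stay compatible with the exponentially-distorted directions in the hyperbolic blocks and must correspond to genuine elements of $\pi_1(S)$, which is exactly what the nontriviality of the Liu--Sun spirality provides.
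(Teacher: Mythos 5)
Your overall plan matches the paper's: split into upper/lower bounds, distinguish by separability, use the Liu--Sun spirality as the controlling invariant and a geometrically infinite piece as the source of the exponential. But there is a concrete gap in the double-exponential accounting, and it is fatal to both your upper and lower bound in the non-separable case.

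You write the non-separable upper bound as $r^{C_1 n}\cdot\exp(C_2 n)\le\exp(\exp(C_3 n))$ and the lower bound as $r^{\Theta(n)}\exp(\Theta(n))=\exp(\exp(\Theta(n)))$. Both of those products are in fact only single exponentials: $r^{cn}\cdot e^{c'n}=e^{(c\log r+c')n}$. So your upper-bound argument, as sketched, would actually prove an \emph{exponential} bound in the non-separable case (contradicting the theorem's double-exponential lower bound), and your lower-bound formula is simply a false equation. The two amplification mechanisms --- the spirality ratio across JSJ tori and the pseudo-Anosov stretch inside a hyperbolic block --- do not multiply; they \emph{compose}. After $j$ JSJ crossings the accumulated spirality drives the transverse height in the fiber direction (the $t_j$ of Lemma~\ref{lem:construct sequence}, or the distances $d(y_j,z_j)$ in Proposition~\ref{prop:mixedupperbound}) up to order $r^j$; the pseudo-Anosov (or, in the upper bound, the exponential estimate of Remark~\ref{rem:constants LandC}) is then applied \emph{to that exponentially large height}, producing surface length of order $\exp(r^j)$. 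With $j\le n/\rho$ this is $\exp(r^{O(n)})$, a genuine double exponential. This is precisely the effect captured by the recursion built from Lemmas~\ref{lem:Inequality1} and~\ref{lem: application rule of sines} and exploited in Claims~2 and 3 of Proposition~\ref{prop:mixedupperbound}, and on the lower-bound side by Remark~\ref{rem:exopentialgrowthsequence} feeding into Lemma~\ref{lem:pseudo} via Construction~\ref{cons:sequence x} and Lemma~\ref{lem:lowerdouble}.

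A secondary point: the paper does not decompose along the Bass--Serre tree into syllables; it works geometrically in $\tilde N$ (with Leeb's nonpositively-curved metric), choosing a path of length $\le Kn$ crossing $O(n)$ JSJ planes and following degeneracy slopes from each crossing point to $\tilde S$. This is why the crossing lemmas have the clean multiplicative/affine form needed for the recursion. Your Bass--Serre packaging is not wrong in spirit, but the quantity you need to track through the induction is the transverse height in the fiber direction, not a naive syllable length --- that is the variable that gets amplified by $\xi_j$ at each JSJ torus and then fed into the exponential of the pseudo-Anosov. Also note that in the separable case the relevant fact is that the \emph{cumulative products} $\prod\xi_{e_i}$ are uniformly bounded (Proposition~\ref{prop:virtualupper}), which is stronger than an individual per-crossing bound and is what keeps the height linear in $n$ and the final answer merely exponential.
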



As mentioned above, the strategy for constructing an action of $\pi_1(N)$ on a $\CAT(0)$ cube complex is to find a suitable collection of immersed surfaces and then consider the $\CAT(0)$ cube complex dual to this collection of surfaces. According to Hagen--Przytycki \cite{HagenPrzytycki15} and Tidmore \cite{Tidmore16}
the fundamental groups of chargeless
graph manifolds and chargeless mixed manifolds act cocompactly on $\CAT(0)$ cube complexes. The cubulations constructed by them are each dual to a collection of immersed surfaces, none of which contains a geometrically infinite piece or two adjacent horizontal pieces. It is clear from the corollary below that the cocompact cubulations of Hagen--Przytycki and Tidmore are canonical. For the purpose of obtaining a proper, cocompact cubulation, all surface subgroups must be of the type used by Hagen--Przytycki and Tidmore.

\begin{cor}
Let $G$ be the fundamental group of a non-geometric $3$--manifold. Let $\{H_1,H_2,\dots,H_k\}$ be a collection of codimension--$1$ subgroups of $G$. Let $X$ be the corresponding dual $\CAT(0)$ cube complex. If at least one $H_i$ is the fundamental group of a surface containing two adjacent horizontal pieces or a geometrically infinite piece, then the action of $G$ on $X$ is not proper and cocompact.
\end{cor}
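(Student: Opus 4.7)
The plan is to argue by contradiction. Suppose $G$ acts properly and cocompactly on $X$. By the Milnor--Schwarz lemma, $G$ is quasi-isometric to $X$ with its combinatorial metric. I would then appeal to the standard Sageev dictionary: each codimension-$1$ subgroup $H_i$ is commensurable with the stabilizer $\Stab_G(\hat{h}_i)$ of the corresponding dual hyperplane $\hat{h}_i \subset X$. Under a proper cocompact $G$-action on $X$, this stabilizer acts properly and cocompactly on $\hat{h}_i$ (hyperplanes of a given $G$-orbit are finitely many up to the stabilizer action). Since hyperplanes are convex subcomplexes of a $\CAT(0)$ cube complex, the inclusion $\hat{h}_i \hookrightarrow X$ is an isometric embedding. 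Combining these two Milnor--Schwarz equivalences shows that $H_i$ is undistorted in $G$.

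To reach the contradiction, I would invoke Theorem~\ref{thm:thesismain} with $H_i = \pi_1(S_i)$. If $S_i$ contains a geometrically infinite piece, then that piece lies in a component of the almost fiber part $\Phi(S_i)$, so Case~(1) or Case~(2) applies and $\Delta_{H_i}^G$ is at least exponential. If instead $S_i$ contains two adjacent horizontal pieces, then both pieces belong to $\Phi(S_i)$ (horizontal pieces always lie in the almost fiber), so the relevant component of $\Phi(S_i)$ contains two adjacent pieces and Case~(3) applies, giving distortion at least quadratic. In either situation $H_i$ is strictly distorted in $G$, contradicting the undistortedness established above.

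The main obstacle is the first step: one must invoke the folklore principle that under a proper cocompact action on a $\CAT(0)$ cube complex, each hyperplane stabilizer acts cocompactly on its convex hyperplane and is therefore undistorted in the acting group. Once this is in hand, the rest is pure bookkeeping, with no further quantitative estimates required. I would write up the argument by citing Sageev's original construction (together with the refinement identifying codimension-$1$ subgroups with hyperplane stabilizers up to commensurability), and then assemble the contradiction from Theorem~\ref{thm:thesismain} in a single short paragraph.
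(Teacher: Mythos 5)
Your argument is correct and is exactly the proof the paper has in mind: the introduction records Wise's observation that proper cocompact cubulations force the hyperplane subgroups to be undistorted, and Theorem~\ref{thm:thesismain} then shows that a clean surface with a geometrically infinite piece (Cases~(1)--(2)) or two adjacent horizontal pieces (Case~(3)) has distortion at least quadratic, hence strictly superlinear, giving the contradiction. The only item you glossed over is the hypothesis in Theorem~\ref{thm:thesismain} that all Seifert fibered blocks are nonelementary; this is harmless because one may pass to the double cover in which that holds, the surfaces elevate with the same piece types, and Proposition~\ref{prop:distortion2} shows distortion is unaffected by passing to finite-index sub- and over-groups.
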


\subsection{Overview}
In Section~\ref{sec:preli} we review some concepts in geometric group
theory. Section~\ref{sec:manifolds} is a review background
about $3$--manifolds and introduced the notion of clean surface. In Section~\ref{sec:almostfiber}, we give the proof of Theorem~\ref{thm:intro:distortionalmostfiber}. The proof of Theorem~\ref{thm:introductionUpperBound} is given in  Section~\ref{sec:distortion mixed}. In Section~\ref{section:putting results}, we discuss about Theorem~\ref{thm:intro:graphmanifold} and Theorem~\ref{thm:thesismain} by combining previous results.

\subsection{Acknowledgements} 
I would like to thank my adviser Chris Hruska for all his help and advice throughout this paper. I would also like to thank Yi Liu, Hongbin Sun, Dan Margalit, Prayagdeep Parija, and Daniel Gulbrandsen for helpful conversations. The author is grateful for the insightful and detailed critiques of the referee that have helped improve the exposition of this paper. The author especially appreciates the referee for pointing out a mistake in Section 5.2 in the earlier version.

\section{preliminaries}
\label{sec:preli}
In this section, we review some concepts in geometric group theory.

Let $(X,d)$ be a metric space, and $\gamma$ a path in $X$. We denote the length of $\gamma$ by $\abs{\gamma}$.

\begin{defn}
\label{def:equivalentfunction}
Let $\mathcal{F}$ be the collection of all functions from positive reals to positive reals. Let $f$ and $g$ be arbitrary elements of $\mathcal{F}$. The function $f$ is \emph{dominated} by a function $g$, denoted by
\emph{$f\preceq g$}, if there are positive constants $A$, $B$, $C$, $D$ and $E$ such that
\[
  f(x) \leq A\,g(Bx+C)+Dx+E \quad \text{for all $x$.}
\]
Functions $f$ and $g$ are \emph{equivalent},
denoted $f\sim g$, if $f\preceq g$ and $g\preceq f$.
\end{defn}

\begin{rem}
The relation $\preceq$ is an equivalence relation on the set $\mathcal{F}$. Let $f$ and $g$ be two polynomial functions with degree at least $1$ in $\mathcal{F}$ then it is not hard to show that they are equivalent if and only if they have the same degree. Moreover, all exponential functions of the form $a^{bx+c}$, where $a>1$, $b>0$ are equivalent.
\end{rem}

\begin{defn}[Subgroup distortion]
Let $H\leq{G}$ be a pair of finitely generated groups, and let $\mathcal{S}$ and $\mathcal{A}$ be finite generating sets of $G$ and $H$ respectively. The \emph{distortion} of $H$ in $G$ is the function
\[
   \Delta_{H}^G(n)
     = \max \bigset{\abs{h}_{\mathcal{A}}}{\text{$h\in{H}$ and $\abs{h}_{\mathcal{S}}\leq{n}$} }
\]
Up to equivalence, the function $\Delta_{H}^{G}$ does not depend on the choice of finite generating sets $\mathcal{S}$ and $\mathcal{A}$.
\end{defn}

It is well known that a group acting properly, cocompactly, and isometrically on a geodesic space is quasi-isometric to the space.
The following corollary of this fact allows us to compute distortion using the geometries of spaces in place of word metrics.

\begin{cor}
\label{cor:GeometricDistortion}
Let $X$ and $Y$ be compact geodesic spaces, and let $g\colon{(Y,y_0)} \to (X,x_0)$ be $\pi_1$--injective. We lift the metrics on $X$ and $Y$ to geodesic metrics on the universal covers $\tilde{X}$ and $\tilde{Y}$ respectively. Let $G=\pi_1(X,x_0)$ and $H=g_{*} \bigl( \pi_1(Y,y_{0}) \bigr)$.
Then the distortion $\Delta^G_H$ is equivalent to the function
\[
   f(n) = \max \bigset{d_{\tilde Y}(\tilde{y}_0,h (\tilde{y}_0))}{\text{$h \in H$ and $d_{\tilde X}(\tilde{x}_0,h (\tilde{x}_0)) \le n$}}.
\]
\end{cor}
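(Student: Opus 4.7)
The plan is to derive the corollary directly from the \v{S}varc--Milnor principle cited immediately before its statement. Since $X$ and $Y$ are compact geodesic spaces, the universal covers $\tilde X$ and $\tilde Y$ are themselves geodesic spaces on which $G = \pi_1(X,x_0)$ and $\pi_1(Y,y_0)$ act properly, cocompactly, and by isometries via deck transformations. The $\pi_1$--injectivity hypothesis identifies $\pi_1(Y,y_0)$ with $H$, so $H$ acts on $\tilde Y$ through this isomorphism. Applying \v{S}varc--Milnor to both actions, the orbit maps $g \mapsto g(\tilde x_0)$ and $h \mapsto h(\tilde y_0)$ are quasi-isometries from $(G,d_\mathcal{S})$ to $\tilde X$ and from $(H,d_\mathcal{A})$ to $\tilde Y$ respectively. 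Concretely, there are constants $K,C > 0$ (take the maximum of the constants from the two applications) so that for every $g \in G$ and every $h \in H$,
\[
\tfrac{1}{K}\abs{g}_\mathcal{S} - C \;\le\; d_{\tilde X}\bigl(\tilde x_0, g(\tilde x_0)\bigr) \;\le\; K\abs{g}_\mathcal{S} + C, \qquad
\tfrac{1}{K}\abs{h}_\mathcal{A} - C \;\le\; d_{\tilde Y}\bigl(\tilde y_0, h(\tilde y_0)\bigr) \;\le\; K\abs{h}_\mathcal{A} + C.
\]

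From here, the equivalence $\Delta^G_H \sim f$ reduces to an elementary two-sided comparison of the defining maxima. For $\Delta^G_H \preceq f$, I would take $h \in H$ with $\abs{h}_\mathcal{S} \le n$; then $d_{\tilde X}(\tilde x_0, h(\tilde x_0)) \le Kn + C$, so by definition of $f$ the distance $d_{\tilde Y}(\tilde y_0, h(\tilde y_0)) \le f(Kn+C)$, which via the word-length bound for $\mathcal{A}$ gives $\abs{h}_\mathcal{A} \le K f(Kn + C) + KC$. Taking the maximum over such $h$ yields the required inequality $\Delta^G_H(n) \le K f(Kn+C) + KC$. For the reverse direction, $f \preceq \Delta^G_H$, I would instead take $h \in H$ with $d_{\tilde X}(\tilde x_0, h(\tilde x_0)) \le n$; then $\abs{h}_\mathcal{S} \le Kn + KC$, so $\abs{h}_\mathcal{A} \le \Delta^G_H(Kn + KC)$, and a final application of the upper \v{S}varc--Milnor bound on $\tilde Y$ converts this into a bound on $d_{\tilde Y}(\tilde y_0, h(\tilde y_0))$ of the required form.

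There is really no hard step here; the only thing to watch is the bookkeeping of quasi-isometry constants, which must be chosen uniformly on both sides so that the linear adjustments $Bx + C$ and multiplicative factor $A$ in Definition~\ref{def:equivalentfunction} can absorb them. The one conceptual point worth stating carefully in the write-up is that $H$ genuinely acts on $\tilde Y$: the cover $\tilde Y$ carries the deck action of $\pi_1(Y,y_0)$, and because $g_*$ is injective the induced map $H \to \pi_1(Y,y_0)$ is a well-defined isomorphism, so the expression $h(\tilde y_0)$ in the statement of $f(n)$ is unambiguous. Once this is observed, the computation above completes the proof.
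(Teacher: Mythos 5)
Your proposal is correct and is essentially the argument the paper intends: the paper states this corollary immediately after citing \v{S}varc--Milnor with no further proof, leaving the reader to unwind exactly the two quasi-isometry comparisons you carry out. Your bookkeeping of the two-sided constant transfers and the observation that $H$ acts on $\tilde Y$ through the isomorphism $g_*$ are both on point and fill in precisely what the paper leaves implicit.
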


The following propositions is routine, and we leave the proof as an exercise for the reader.

\begin{prop}
\label{prop:distortion}
Let $K',K$ and $G'$ be finitely generated subgroups of a finitely generated group $G$ such that $K' \le G'$ and $K' \le K$. Suppose that $K'$ is undistorted in $K$ and $G'$ is undistorted in $G$ Then $\Delta_{K'}^{G'} \preceq \Delta_{K}^{G}$.
\end{prop}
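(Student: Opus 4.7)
The plan is to derive the inequality by a direct chain of word-length comparisons across the four groups $K'\le K\le G$ and $K'\le G'\le G$; the argument is essentially bookkeeping with word metrics once the generating sets are arranged correctly. Since $\Delta_H^G$ is defined only up to the equivalence $\sim$ of Definition~\ref{def:equivalentfunction}, I may choose finite generating sets $\mathcal{S}_{K'}\subseteq \mathcal{S}_K\subseteq \mathcal{S}_G$ and $\mathcal{S}_{K'}\subseteq \mathcal{S}_{G'}\subseteq \mathcal{S}_G$ compatible with the subgroup inclusions; for instance, start with arbitrary finite generating sets of the four groups and take successive unions. With this nesting, any word over a subgroup's generators is also a word over the supergroup's generators, so for $g$ in a smaller group we automatically have $|g|_{\text{larger}}\le |g|_{\text{smaller}}$.

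Now fix $n\ge 0$ and $h\in K'$ with $|h|_{\mathcal{S}_{G'}}\le n$. The nested-generator observation immediately gives $|h|_{\mathcal{S}_G}\le |h|_{\mathcal{S}_{G'}}\le n$, and since $h$ also lies in $K\le G$, the definition of $\Delta_K^G$ yields $|h|_{\mathcal{S}_K}\le \Delta_K^G(n)$. The hypothesis that $K'$ is undistorted in $K$ provides constants $A,B\ge 0$ with $|h|_{\mathcal{S}_{K'}}\le A\,|h|_{\mathcal{S}_K}+B$ for every $h\in K'$, and substituting gives $|h|_{\mathcal{S}_{K'}}\le A\,\Delta_K^G(n)+B$. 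Taking the maximum over all such $h$ yields $\Delta_{K'}^{G'}(n)\le A\,\Delta_K^G(n)+B$, which is precisely $\Delta_{K'}^{G'}\preceq \Delta_K^G$ in the sense of Definition~\ref{def:equivalentfunction}.

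There is no serious obstacle here; the only step that requires any thought is the compatible choice of generating sets in the first paragraph, needed to pass cleanly from $|h|_{\mathcal{S}_{G'}}$ down to $|h|_{\mathcal{S}_G}$ and from $|h|_{\mathcal{S}_K}$ down to $|h|_{\mathcal{S}_{K'}}$ without tracking extra multiplicative constants at each step. I also note in passing that the proof above uses only the undistortion of $K'$ in $K$; the second hypothesis that $G'$ is undistorted in $G$ is not actually needed for this direction, since $|h|_{\mathcal{S}_G}\le |h|_{\mathcal{S}_{G'}}$ follows from the nesting alone. If one preferred to avoid nesting the generating sets and instead worked with arbitrary fixed generators, the same argument goes through using that a generating set of a subgroup is represented by bounded-length words in the ambient generating set, which again introduces at most a linear distortion absorbed by $\preceq$.
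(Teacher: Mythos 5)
Your proof is correct; the paper leaves this proposition as an exercise, and your chain $|h|_{\mathcal{S}_{G'}}\le n \Rightarrow |h|_{\mathcal{S}_G}\le n \Rightarrow |h|_{\mathcal{S}_K}\le\Delta_K^G(n) \Rightarrow |h|_{\mathcal{S}_{K'}}\le A\,\Delta_K^G(n)+B$ is exactly the intended routine argument. Your side remark is also accurate: the hypothesis that $G'$ is undistorted in $G$ is never used, since passing from $|h|_{\mathcal{S}_{G'}}$ to $|h|_{\mathcal{S}_G}$ needs only that $G'$ is a finitely generated subgroup of $G$, which gives at most a linear stretch absorbed by $\preceq$.
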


\begin{prop}
\label{prop:distortion2}
Let $G$, $H$, $K$ be finitely generated groups with $K \le H \le G$.
\begin{enumerate}
   \item If $H$ is a finite index subgroup of $G$ then $\Delta_{K}^{H} \sim \Delta_{K}^{G}$.
   \item If $K$ is a finite index subgroup of $H$ then $\Delta_{K}^{G} \sim \Delta_{H}^{G}$.   \qed
\end{enumerate}
\end{prop}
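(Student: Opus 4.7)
The plan is to reduce both parts of the proposition to the single fact that, for a finitely generated group $B$ with a finite-index subgroup $A$ and finite generating sets $\mathcal{S}_A, \mathcal{S}_B$, there exist constants $C \ge 1$ and $D \ge 0$ such that
\[
\tfrac{1}{C}\abs{a}_{\mathcal{S}_A} - D \le \abs{a}_{\mathcal{S}_B} \le C\abs{a}_{\mathcal{S}_A} + D
\]
for every $a \in A$. The upper bound is automatic, since each $s \in \mathcal{S}_A$ is an $\mathcal{S}_B$-word of bounded length. The lower bound is where the finite-index hypothesis enters: fix a finite transversal $T \subseteq B$ for $A$ containing $1$, and rewrite any $\mathcal{S}_B$-word for $a$ as an $\mathcal{S}_A$-word by successively pushing coset representatives to the right.

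For part (1), I would fix finite generating sets $\mathcal{S}_G, \mathcal{S}_H, \mathcal{S}_K$ and apply the above bound to the finite-index inclusion $H \le G$ with constants $C, D$. Because $K \subseteq H$, every $k \in K$ satisfies
\[
\abs{k}_{\mathcal{S}_H} \le C\abs{k}_{\mathcal{S}_G} + CD \quad \text{and} \quad \abs{k}_{\mathcal{S}_G} \le C\abs{k}_{\mathcal{S}_H} + D.
\]
Substituting these into the defining formulas for $\Delta_K^H$ and $\Delta_K^G$ and maximizing over $k$ yields both $\Delta_K^H \preceq \Delta_K^G$ and $\Delta_K^G \preceq \Delta_K^H$, hence the equivalence.

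For part (2), I would apply the same word-length bound, now to the finite-index inclusion $K \le H$, obtaining constants $C' \ge 1$ and $D' \ge 0$ with $\abs{k}_{\mathcal{S}_K} \le C'\abs{k}_{\mathcal{S}_H} + D'$ for all $k \in K$. I also need to pass between arbitrary $h \in H$ and associated $k \in K$. To do so, I would fix a finite transversal $T$ for $K$ in $H$ with $1 \in T$, and set $M := \max_{t \in T}\bigl(\abs{t}_{\mathcal{S}_G} + \abs{t}_{\mathcal{S}_H}\bigr)$. Writing $h = kt$ with $k \in K$ and $t \in T$, both $\abs{h}_{\mathcal{S}_G} - \abs{k}_{\mathcal{S}_G}$ and $\abs{h}_{\mathcal{S}_H} - \abs{k}_{\mathcal{S}_H}$ have absolute value at most $M$. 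Hence for $h \in H$ with $\abs{h}_{\mathcal{S}_G} \le n$ one computes
\[
\abs{h}_{\mathcal{S}_H} \le \abs{k}_{\mathcal{S}_H} + M \le C' \abs{k}_{\mathcal{S}_K} + D' + M \le C'\,\Delta_K^G(n+M) + D' + M,
\]
which yields $\Delta_H^G \preceq \Delta_K^G$. The reverse direction $\Delta_K^G \preceq \Delta_H^G$ follows at once, since for $k \in K$ with $\abs{k}_{\mathcal{S}_G} \le n$ we have $\abs{k}_{\mathcal{S}_H} \le \Delta_H^G(n)$ and therefore $\abs{k}_{\mathcal{S}_K} \le C'\Delta_H^G(n) + D'$.

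There is no genuine obstacle here; the proof is a bookkeeping exercise in word-length estimates. The only point worth noting is that the finite-index hypothesis enters differently in the two parts: in part (1) it is needed to ensure that $\mathcal{S}_H$- and $\mathcal{S}_G$-lengths are comparable on $K$, while in part (2) it is used to produce a bounded-length transversal allowing one to replace $\abs{h}$-estimates by $\abs{k}$-estimates.
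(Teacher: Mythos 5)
The paper leaves this proposition as an exercise with no proof supplied, so there is nothing to compare against line by line; your argument is the standard one and it is correct. The key geometric input — that inclusion of a finite-index subgroup is a bi-Lipschitz equivalence of word metrics (equivalently, a quasi-isometry by Milnor--\v{S}varc) — is applied to $H\le G$ in part (1) and to $K\le H$ in part (2), and the transversal bookkeeping in part (2) is exactly what is needed to pass between $\abs{h}$ and $\abs{k}$ estimates.

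One small inconsistency worth fixing on revision: in part (2) you define $C',D'$ so that $\abs{k}_{\mathcal{S}_K}\le C'\abs{k}_{\mathcal{S}_H}+D'$, but the middle step of the displayed chain, $\abs{k}_{\mathcal{S}_H}+M\le C'\abs{k}_{\mathcal{S}_K}+D'+M$, uses the \emph{opposite} inequality $\abs{k}_{\mathcal{S}_H}\le C'\abs{k}_{\mathcal{S}_K}+D'$ (which is the automatic direction, since each generator of $K$ is a word in $\mathcal{S}_H$, and would typically come with different constants). Both inequalities are true, so nothing breaks — the relation $\preceq$ is insensitive to which constants you use — but as written the two occurrences of $C',D'$ denote different pairs of constants, and a careful reader will pause. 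Relatedly, your closing remark that finite index in part (2) enters only through the transversal is not quite right: the transversal handles the forward direction $\Delta_H^G\preceq\Delta_K^G$, but the reverse direction $\Delta_K^G\preceq\Delta_H^G$ genuinely uses the finite-index bound $\abs{k}_{\mathcal{S}_K}\le C'\abs{k}_{\mathcal{S}_H}+D'$, which fails for infinite-index $K$.
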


\begin{lem}[Proposition~9.4 \cite{Hruska2010}]
\label{lem:HruskaProp9.4}
Let $G$ be a finitely generated group with a word length metric $d$. Suppose $H$ and $K$ are subgroups of $G$. For each constant $r$ there is a constant $r' = r'(G,d,H,K)$ so that in the metric space $(G,d)$ we have 
\[
\mathcal{N}_{r}(H) \cap \mathcal{N}_{r}(K) \subset \mathcal{N}_{r'}(H \cap K)
\]
\end{lem}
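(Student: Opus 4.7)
The plan is to argue by contradiction using a pigeonhole argument that exploits the local finiteness of the word metric. Suppose no such $r'$ exists. Then there is a sequence of elements $g_n \in \mathcal{N}_r(H) \cap \mathcal{N}_r(K)$ with $d(g_n, H \cap K) \to \infty$. By the definition of the neighborhoods, for each $n$ we can write $g_n = h_n s_n = k_n t_n$ with $h_n \in H$, $k_n \in K$, and $\abs{s_n}_d, \abs{t_n}_d \le r$.

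The key observation is that the ball of radius $r$ in $(G,d)$ is finite, since $G$ is finitely generated. By pigeonhole, I may pass to an infinite subsequence (still indexed by $n$) along which $s_n = s$ and $t_n = t$ are constant elements of $G$. Setting $x = st^{-1}$, the relation $h_n s = k_n t$ gives $h_n^{-1} k_n = x$ for every $n$; in particular $k_n = h_n x$. Comparing $n$ with the base index $1$, I obtain
\[
   k_n k_1^{-1} = h_n x x^{-1} h_1^{-1} = h_n h_1^{-1},
\]
so the element $h_n h_1^{-1}$ lies in both $H$ and $K$, i.e.\ in $H \cap K$.

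It then remains to bound the distance from $g_n$ to this element of $H \cap K$. Using left-invariance of the word metric,
\[
   d(g_n, h_n h_1^{-1}) = \abs{g_n^{-1} h_n h_1^{-1}}_d = \abs{s^{-1} h_n^{-1} h_n h_1^{-1}}_d = \abs{s^{-1} h_1^{-1}}_d \le r + \abs{h_1}_d,
\]
which is a constant independent of $n$. This contradicts $d(g_n, H \cap K) \to \infty$ and proves the lemma, with $r'$ depending only on $G$, the generating set used to define $d$, and the finitely many choices of $(s,t)$ and corresponding base elements $h_1$ arising from the pairs of representatives in the $r$-ball. The only mild subtlety is that the resulting $r'$ depends on the subgroups $H, K$ (through the initial representatives), which is exactly what the statement allows; no further obstacle is expected.
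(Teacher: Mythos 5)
Your proof is correct. Note that the paper does not supply its own proof of this lemma---it is stated with a citation to Proposition~9.4 of Hruska's paper on relatively hyperbolic groups---and the pigeonhole argument you give (finitely many offset pairs $(s,t)$ in the $r$--ball; two elements of $\mathcal{N}_{r}(H)\cap\mathcal{N}_{r}(K)$ sharing the same offset pair differ by an element of $H\cap K$; take the maximum of $r+\abs{h_1}$ over the finitely many nonempty classes) is essentially the same elementary argument as in that reference. A small stylistic point: since your closing paragraph already extracts the explicit constant $r'$, the initial contradiction framing is unnecessary and the direct version is cleaner, but this does not affect correctness.
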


\begin{defn}
A function $f \colon \N \to \N$ is \emph{superadditive} if 
\[
f(a+b) \ge f(a) + f(b) \, \, \, \text{for all $a,b \in \N$}
\]
The \emph{superadditive closure} of a function $f \colon \N \to \N$ is the function defined by the formula
\[
\bar{f}(n) = \max \bigset{f(n_1)+ \cdots + f(n_{\ell})}{\ell \ge 1 \text{\,\,and $n_1 +\cdots +n_{\ell} = n$}}
\]
\end{defn}
\begin{rem}
\label{rem:superadditive}
The following facts are easy to verify. We leave it as an exercise to the reader.
\begin{enumerate}
    \item Suppose that $f_{i} \sim g_{i}$ with $i =1,\dots,\ell$. Let $f(n) = \max \bigset{f_{i}(n)}{i=1,\dots,\ell}$ and $g(n) = \max \bigset{g_{i}(n)}{i=1,\dots,\ell}$. Then $f \sim g$.
    \item If $f$ and $g$ are superadditive and $f\sim g$ then $\bar{f} \sim \bar{g}$.
\end{enumerate}
\end{rem}

\section{Surfaces in non-geometric 3-manifolds}
\label{sec:manifolds}
In this section, we review backgrounds of surfaces in $3$-manifolds. Throughout this paper, a $3$--manifold is alway assumed to be compact, connected, orientable, irreducible with empty or toroidal boundary. A surface is always compact, connected and orientable and not a $2$--sphere $S^2$.

\begin{defn}
Let $M$ be a compact, orientable, irreducible $3$--manifold with empty or toroidal boundary. The $3$--manifold $M$ is \emph{geometric} if its interior admits a geometric structure in the sense of Thurston which are $3$--sphere, Euclidean $3$--space, hyperbolic $3$-space, $S^{2} \times \R$, $\mathbb{H}^{2} \times \R$, $\widetilde{SL}(2,\R)$, Nil and Sol. Otherwise, $M$ is called \emph{non-geometric}. By the Geometrization Theorem, a non-geometric $3$--manifold can be cut into hyperbolic and Seifert fibered ``blocks'' along a JSJ decomposition. It is called a \emph{graph manifold} if all the blocks are Seifert fibered, otherwise it is a \emph{mixed manifold}. A Seifert fibered space is called \emph{nonelementary} if it is a circle bundle over a hyperbolic $2$--orbifold. 
\end{defn}

A non-geometric $3$--manifold $M$ always has a double cover in which all Seifert fibered blocks are nonelementary. In this section, we will always assume that all Seifert fibered blocks in a non-geometric $3$--manifold are nonelementary.

\begin{defn}
Let $M$ be a Seifert fibered space, and $S \looparrowright M$ is a properly immersed $\pi_1$--injective surface. The surface $S$ is called \emph{horizontal} if it intersects transversely to the Seifert fibers, \emph{vertical} if it is a union of the Seifert fibers.
\end{defn}

The definition of geometrically finite surface below is one of many equivalent forms. We refer the reader to \cite{Bowditch93} for detail discussions.
\begin{defn}
Let $g \colon S \looparrowright M$ be a properly immersed $\pi_1$--injective surface in a hyperbolic $3$--manifold $M$. The surface $S$ is called \emph{geometrically finite} if $\pi_1(S)$ is undistorted subgroup of $\pi_1(M)$, \emph{geometrically infinite} if $S$ is not a geometrically finite surface. 
\end{defn}

\begin{defn}
A properly immersed $\pi_1$--injective surface $g \colon S \looparrowright M$ is called \emph{virtual fiber} if after applying a homotopy relative to boundary, $g$ can be lifted to some finite cover $M_{S}$ of $M$ that fibers over the circle such that $g$ lifts to a fiber. In fact, $M_{S}$ is the mapping torus 
\[
M_{S} = \frac{S \times [0,1]}{(x,0) \sim (\phi(x),1)}
\] for some homeomorphism $\phi$ of $S$.
\end{defn}

\begin{rem}
\label{rem:virtual fiber}
Horizontal surfaces in Seifert fibered spaces and geometrically infinite surfaces in hyperbolic manifolds are all virtual fiber. In particular, if $g \colon S \looparrowright M$ is a horizontal surface in a nonelementary Seifert fibered space $M$ then we may choose $\phi$ as the identity map of $S$ 
(see Lemma~2.1 \cite{RW98}). By Subgroup Tameness Theorem (a combination of Tameness Theorem \cite{Agol04}, \cite{CalegariGabai06} and Canary's Covering Theorem \cite{Canary96}), if $g \colon S \looparrowright M$ is geometrically infinite surface in a hyperbolic manifold $M$ then we may choose $\phi$ as a pseudo-Anosov homeomorphism of $S$ stabilizing each component of $\partial S$, fixing periodic points on $\partial S$. 
In addition, the finite cover map $M_{S} \to M$ takes $S \times \{0\}$ to the image $g(S)$, and $g$ lifts to an embedding $g' \colon S \hookrightarrow M_{S}$ (up to homotopy) where $g'(S)$ is the surface fiber $S \times \{0\}$ in $M_{S}$.
\end{rem}

\begin{defn}
\label{defn:essential}
A properly immersed surface $g \colon (B,\partial B) \looparrowright (M,\partial M) $ is called \emph{essential} if it is not homotopic (relative to $\partial B$) to a map $B \to \partial M$ and the induced homomorphism $g_{*} \colon \pi_1(B) \to \pi_1(M)$ is injective. A loop in the surface $S$ is an \emph{essential curve} if it is neither nullhomotopic or homotopic into the boundary of $S$.
\end{defn}

\begin{rem}
\label{rem:distortionwellknow}
The distortion of a horizontal surface subgroup in a Seifert fibered space group is linear (see \cite{Hruska-Nguyen}) and the distortion of a geometrically infinite surface subgroup in a hyperbolic manifold group is exponential (by Subgroup Tameness Theorem). 
\end{rem}

\begin{defn}[Clean Surface]
\label{defn:clean surface}
Let $N$ be a non-geometric $3$--manifold, and $\mathcal{T}$ the union of JSJ tori. Let $S$ be a compact, orientable, connected surface. Let $g \colon S \looparrowright N$ be an immersion such that $S$ and $\mathcal{T}$ intersects transversely. The immersion is called \emph{clean surface} in $N$ if the following holds.
\begin{enumerate}
    \item $g(\partial{S}) \subset \mathcal{T} \cup \partial{N}$
    \item $g(S-\partial S) \cap \partial N = \emptyset$
    \item $S$ intersects the JSJ tori of $N$ in a minimal finite collection $\mathcal{T}_{g}$ of disjoint essential curves of $S$.
    \item The complementary components of the union of curves in $\mathcal{T}_{g}$ are essential subsurfaces (in the sense of Definition~\ref{defn:essential}) of $S$, called \emph{pieces} of $S$. Each piece of $S$ is mapped into either a hyperbolic block or Seifert fibered block of $N$. Each piece of $S$ in a hyberbolic block is either geometrically finite or geometrically infinite. Each piece of $S$ in a Seifert fibered block is either horizontal or vertical.
    \item Let $N' \to N$ be the covering space corresponding to the subgroup $\pi_1(S)$ of $\pi_1(N)$. The immersion $g$ lifts to an embedding $S \to N'$.
\end{enumerate}
\end{defn}

\begin{defn}
The \emph{almost fiber part} $\Phi(S)$ of $S$ is the union of all the horizontal or geometrically infinite pieces mapped into Seifert fibered or hyperbolic blocks of $N$ respectively. The surface $S$ is called \emph{almost fiber} if $\Phi(S) =S$. 
\end{defn}

\begin{rem}
\begin{enumerate}
    \item Any properly immersed $\pi_1$--injective surface $g \colon S \looparrowright N$ with $S$ compact, orientable, connected and not homeomorphic to $S^{2}$ is homotopic to a clean surface. 
    \item Each component of the almost fiber part of a clean surface is a clean almost fiber surface.
\end{enumerate}
\end{rem}

Rubinstein-Wang \cite{RW98} introduces a combinatorial invariant to characterize the virtual embedding of a horizontal surface $S$ in graph manifold $N$ (i.e, after applying a homotopy, the immersion lifts to an embedding of $S$ in some finite cover of $N$). In \cite{YiLiu2017}, Liu generalizes the invariant of Rubinstein-Wang, which he calls \emph{spirality} (this concept has also been called ``dilation'' in \cite{Hruska-Nguyen}), to surfaces in closed $3$--manifold $N$, and proves that spirality is the obstruction to the surface being virtually embedded. Recently, Sun \cite{Sun18} generalizes Liu's work to separability of arbitrary finitely generated subgroup in non-geometric $3$--manifolds.

There are two equivalent definitions of spirality given by \cite{YiLiu2017}. Liu first defines spirality by partial dilations and a principal $\Q^{\times}$--bundle over $\Phi(S)$. Liu then gives a combinatorial formula (Formula~4.5 in Secttion~4.2 \cite{YiLiu2017}) and shows that spirality can be computed by this formula. The definition of spirality below is from Section~4.2 in \cite{YiLiu2017} that also can be seen in Section~3.3 \cite{Sun18}.

\begin{defn}[Spirality]
\label{defn:DilationSlopes}
Let $g \colon S \looparrowright N$ be a clean surface in a non-geometric $3$--manifold $N$. With respect to $\mathcal{T}_{g}$, let $\Gamma(\Phi(\mathcal{T}_{g}))$ be the dual graph of $\Phi(S)$. 
For each vertex $v$ of $\Gamma(\Phi(\mathcal{T}_{g}))$, let $B_{v}$ be the piece of $S$ corresponding to the vertex $v$, and let $M_{v}$ be the block of $N$ such that $B_{v}$ is mapped into $M_{v}$. We choose a mapping torus \[
M_{B_v} = \frac{B_{v} \times [0,1]}{(x,0) \sim (\phi_{v}(x),1)}
\] as in Remark~\ref{rem:virtual fiber}. 
For each directed edge $e$ in $\Gamma(\Phi(\mathcal{T}_{g}))$ with $v$ as its initial vertex. Let $c_{e}$ be the circle boundary of $B_{v}$ corresponding to $e$. Let $T_{e}$ be the boundary torus of $M_{v}$ containing $c_{e}$. Let $T'_{e}$ be the boundary torus of $M_{B_v}$ containing $c_{e}$. We associate to $c_e$ a nonzero integer 
$h_e = [T_{e}': T_e]$
where $[-:-]$ denotes the covering degree. Let $-e$ denote $e$ with the orientation reversed. Let 
\[
\xi_{e} = h_{e}\bigl /h_{-e}
\]
There is a natural homomorphism $w \colon H_{1}(\Phi(S);\Z) \to \Q^{\times}$ defined as follows.
For any directed $1$--cycle $\gamma$ in $\Phi(S)$ dual to a cycle of directed edges $e_1, \dots ,e_{n}$ in $\Gamma(\Phi(\mathcal{T}_{g}))$, the \emph{spirality} of $\gamma$ is the number 
\[
w(\gamma) = \prod_{i=1}^{n}\xi_{e_i}
\]
We say the \emph{spirality of $S$ is trivial} if $w$ is a trivial homomorphism. The \emph{governor} of $g$ with respect to the chosen mapping torus $M_{B_{v}}$ is the maximum of values $\xi_{e}$ with $e$ varying over all directed edges in the graph $\Gamma(\Phi(\mathcal{T}_{g}))$.
\end{defn}

\begin{rem}
\label{rem:SUNLIU}
\begin{enumerate}
    \item It is shown by Yi Liu in \cite{YiLiu2017} that the homomorphism $w$ does not depend on the choice of mapping torus $M_{B_v}$. Moreover, Yi Liu shows that if $N$ is a closed manifold, and $S$ is a closed surface then $\pi_1(S)$ is separable in $\pi_1(N)$ if and only if the spirality of $S$ is trivial (see Theorem~1.1 \cite{YiLiu2017}). Recent work of Sun (see Theorem~1.3 in \cite{Sun18}) allows us to say that fundamental group of a clean surface $S$ in a non-geometric $3$--manifold $N$ is separable if and only if the spirality of $S$ is trivial.
    \item When $N$ is a graph manifold and $S$ is horizontal, properly immersed then the notion of spirality in Definition~\ref{defn:DilationSlopes} was previously studied by Rubinstein-Wang \cite{RW98}.
\end{enumerate}
\end{rem}

The proof of the following proposition is essentially the same as Proposition~4.15 in \cite{Hruska-Nguyen}.
\begin{prop}
\label{prop:virtualupper}
For each $\gamma \subset \Phi(S)$ as in Definition~\ref{defn:DilationSlopes}, we define 
\[
\Lambda_{\gamma} =\max \bigset{\prod_{i=j}^{k} \xi_{e_i}}{1 \le j \le k \le n}
\]
If the spirality of $S$ is trivial, then there exists a positive constant $\Lambda$ such that $\Lambda_{\gamma} \le \Lambda$ for all all directed $1$--cycle $\gamma$ in $\Phi(S)$.
\end{prop}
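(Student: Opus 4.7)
The plan is to reduce the bound on partial products along a cycle to the finiteness of the dual graph $\Gamma(\Phi(\mathcal{T}_g))$, by constructing a vertex-valued ``potential'' whose coboundary is exactly the edge labeling $\xi$. More precisely, pick a base vertex $v_0$ of $\Gamma(\Phi(\mathcal{T}_g))$ and define a function $\Psi \colon V\bigl(\Gamma(\Phi(\mathcal{T}_g))\bigr) \to \Q^\times$ by choosing, for each vertex $v$, a directed edge-path $f_1, \dots, f_\ell$ from $v_0$ to $v$ and setting $\Psi(v) = \prod_{i=1}^{\ell} \xi_{f_i}$, with $\Psi(v_0) = 1$. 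Triviality of spirality tells us that $w \colon H_1(\Phi(S);\Z) \to \Q^\times$ is trivial, so the product of $\xi$ around any closed directed cycle in $\Gamma(\Phi(\mathcal{T}_g))$ arising from a $1$--cycle in $\Phi(S)$ equals $1$; this makes $\Psi$ independent of the chosen path and hence well-defined. Consequently, for every directed edge $e$ from $u$ to $w$, one has the identity $\xi_e = \Psi(w)/\Psi(u)$.

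Now take any directed $1$--cycle $\gamma \subset \Phi(S)$ dual to a cycle of directed edges $e_1, \dots, e_n$, traversing the vertex sequence $v_0, v_1, \dots, v_n = v_0$ of $\Gamma(\Phi(\mathcal{T}_g))$. For each $1 \le j \le k \le n$ the partial product telescopes as
\[
\prod_{i=j}^{k} \xi_{e_i} \;=\; \prod_{i=j}^{k} \frac{\Psi(v_i)}{\Psi(v_{i-1})} \;=\; \frac{\Psi(v_k)}{\Psi(v_{j-1})}.
\]
Because $\Gamma(\Phi(\mathcal{T}_g))$ is a finite graph, the set $\{\Psi(v) : v \in V(\Gamma(\Phi(\mathcal{T}_g)))\}$ is a finite subset of $\Q^\times$, and therefore the quantity
\[
\Lambda \;=\; \frac{\max_{v} \Psi(v)}{\min_{v} \Psi(v)}
\]
is a finite positive constant depending only on $g$, not on $\gamma$. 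The displayed telescoping then gives $\Lambda_\gamma \le \Lambda$ uniformly for every $\gamma$.

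The only subtlety I anticipate is verifying that $\Psi$ is genuinely well-defined: I need to know that any two directed edge-paths in $\Gamma(\Phi(\mathcal{T}_g))$ from $v_0$ to $v$ differ by a closed walk that can be realized as (a sum of) directed $1$--cycles in $\Phi(S)$, so that trivial spirality forces the two corresponding products of $\xi$ to agree. This is a standard fact about the surjection from $\pi_1$ of a $2$--complex onto $\pi_1$ of its dual graph, and it is implicit in the argument of Proposition~4.15 of \cite{Hruska-Nguyen} from which the proof is adapted; I would fill in this step by realizing each combinatorial loop in $\Gamma(\Phi(\mathcal{T}_g))$ as a transverse loop in $\Phi(S)$ meeting the JSJ annuli in the prescribed order, and then invoking triviality of $w$ on its homology class.
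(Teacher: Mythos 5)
Your proof is correct and follows the same line as Proposition 4.15 of Hruska--Nguyen, which this paper defers to: define a potential $\Psi$ on vertices of the dual graph so that $\xi_e=\Psi(w)/\Psi(u)$ and telescope, using the finiteness of the graph for the uniform bound. You also correctly isolate the one point that needs justification, namely that trivial spirality forces the $\xi$--product around every closed walk in $\Gamma(\Phi(\mathcal{T}_g))$ to equal $1$: this holds because the collapse map $\Phi(S)\to\Gamma(\Phi(\mathcal{T}_g))$ is $\pi_1$--surjective (the pieces are connected), so every cycle of directed edges in the dual graph is realized by a directed $1$--cycle in $\Phi(S)$, and products over immediate backtracks are $\xi_e\cdot\xi_{-e}=1$.
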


\begin{defn}
\label{defn:flowline}
Let $F$ be a compact, orientable connected surface with non-empty boundary and $\chi(F) < 0$. Let $\varphi \colon F \to F$ be a orientation preserving homeomorphism fixing $\partial{F}$ setwise. Let $M_{F} = F \times [0,1] \bigl / (x,0) \sim (\varphi(x),1)$. 
Projection of $F \times [0,1]$ onto the second factor induces a map $\sigma: M_{F} \to S^1$ which is a fibration with fiber $F$. The foliation of $F \times [0,1]$ by intervals has image in $M_{F}$ a one-dimensional foliation which we denote by $\mathcal{L}$ called the \emph{suspension flow} on $M_{F}$.
\end{defn}

In the following, when we say $\varphi$ \emph{fixes periodic points} on $\partial F$, we mean all the periodic points of $\varphi$ on $\partial F$ are fixed points of $\varphi$. We note that if $\varphi$ is a pseudo-Anosov, then after passing to a power $\varphi^{m}$ of $\varphi$ for some sufficiently large integer $m$, the map $\varphi^{m}$ fixes all periodic points of $\varphi$ on $\partial F$.

\begin{defn}[Degeneracy slope]
\label{defn:degeneracyslope}
If the map $\varphi$ in Definition~\ref{defn:flowline} fixes periodic points on $\partial F$ then on each boundary component of $M_{F}$, there exists a closed leaf (of the suspension flow), and different closed leaves in the same boundary component are parallel to each other. We will call any such leaf a \emph{degeneracy slope}.
Each boundary component $c$ of $F$ is mapped into a boundary torus of $M_{F}$, we fix a degeneracy slope on this torus, and denoted it by $\mathbf{s}_{cF}$.
\end{defn}
Let $f \colon F \times \R \to F \times \R$ be the homeomorphism given by $f(x,t) = \bigl (\varphi(x),t+1 \bigr )$. We denote $\langle f \rangle$ be the infinite cyclic group generated by $f$ and $\hat{M}_{F} = F \times \R$. We note that the quotient space $F \times \R \bigl / \langle f \rangle $ is the mapping torus $M_{F}$. Let the triple $\bigl (\hat{M}_{F},\theta^{1},\theta^{2} \bigr )$ be the pullback bundle of the fibration $\sigma \colon M_{F} \to S^1$ by the infinite cyclic covering map $\R \to S^1$ where $\theta^{2} \colon F \times \R \to \R$ is the projection on the second factor and $\theta^{1}$ is the quotient map $F \times \R \to F \times \R \bigl / \langle f \rangle $. 
The universal cover $\tilde{M}_{F}$ is identified with $\tilde{F} \times \R$. For each integer $n$, the subspace $\tilde{F} \times \{n\}$ of $\tilde{M}_{F} = \tilde{F} \times \R$ is called a \emph{slice} of $\tilde{M}$.
We have the following lemma.

\begin{lem}
\label{lem:height}
Let $M_{F}$ be the mapping torus of a orientation preserving homeomorphism $\varphi$ of a compact orientable connected surface $F$ with nonempty boundary and $\chi(F) <0$. We assume that $\varphi$ fixes $\partial{F}$ setwise and $\varphi$ fixes periodic points on $\partial{F}$. Equip $M_{F}$ with a length metric, and let $d$ be the metric on $\tilde{M}_{F}$ induced from the metric on $M_{F}$. There are positive constants $L$ and $C$ such that for any $x$ in the slice $\tilde{F} \times \{n\}$ and $y$ in the slice  $\tilde{F} \times \{m\}$ then 
\[
\abs{m-n} \le L\,d(x,y) + C
\]
\end{lem}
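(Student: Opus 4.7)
The plan is to exploit the fibration $\sigma \colon M_F \to S^1$ directly. Taking $S^1 = \R/\Z$ with total length one, $\sigma$ is a smooth map defined on the compact manifold $M_F$, hence $K$--Lipschitz for some $K > 0$ with respect to the chosen length metric. Lifting $\sigma$ through the universal covering $\R \to S^1$ (which is a local isometry), one obtains a map $\tilde\sigma \colon \tilde M_F \to \R$ on the universal cover of $M_F$; the standard covering-space argument (covering maps between length spaces are $1$--Lipschitz) shows that $\tilde\sigma$ is again $K$--Lipschitz with respect to the lifted length metric on $\tilde M_F$.

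The next step is to identify $\tilde\sigma$ with the natural projection $\pi_\R \colon \tilde F \times \R \to \R$ under the identification $\tilde M_F = \tilde F \times \R$ from the paragraph preceding the lemma. This identification is essentially forced by the setup: the intermediate cover $\hat M_F = F \times \R$ is the pullback of $\sigma$ along $\R \to S^1$, so $\theta^2$ agrees with the lift of $\sigma$ to $\hat M_F$; passing further to the universal cover $\tilde F \times \R \to F \times \R$ is a covering in the fiber $F$ only and preserves the $\R$--coordinate. Hence $\pi_\R$ coincides with $\tilde\sigma$ up to an additive constant depending on the choice of basepoints, so $\pi_\R$ is also $K$--Lipschitz.

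Given $x \in \tilde F \times \{n\}$ and $y \in \tilde F \times \{m\}$, by construction $\pi_\R(x) = n$ and $\pi_\R(y) = m$, so
\[
|n - m| \;=\; |\pi_\R(x) - \pi_\R(y)| \;\le\; K\,d(x,y),
\]
which yields the estimate with $L = K$; the constant $C$ in the statement only serves to absorb the additive basepoint ambiguity in identifying $\tilde\sigma$ with $\pi_\R$, and could in principle be taken to be zero. There is no serious obstacle here: once one observes that the $\R$--coordinate on $\tilde M_F = \tilde F \times \R$ is a globally defined Lipschitz function pulled back from the base $S^1$ of the fibration, the inequality is immediate. Note in particular that the hypotheses on $\varphi$ fixing $\partial F$ setwise and fixing periodic points on $\partial F$ play no role in this argument; they are needed elsewhere, for the definition of the degeneracy slope.
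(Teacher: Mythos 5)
Your argument is correct and takes a genuinely different route from the paper's. The paper works at the level of fundamental groups: it picks a lift $\tilde x_0$ of a basepoint $x_0\in\partial F$, approximates $x,y$ by nearby orbit points $x',y'$, invokes the \v Svarc--Milnor lemma to compare $d(x',y')$ with word length in $\pi_1(M_F)$, and then uses that the quotient homomorphism $\sigma_*\colon\pi_1(M_F)\to\Z$ from the short exact sequence $1\to\pi_1(F)\to\pi_1(M_F)\to\Z\to1$ is Lipschitz on word length; this chain of comparisons produces both the multiplicative constant $L$ and the additive constant $C$ (the latter absorbing the $2\epsilon$ approximation error). You instead pass to the universal covers directly and observe that the projection $\pi_\R\colon\tilde F\times\R\to\R$ is a lift of the fibration map $\sigma$, hence Lipschitz with the same constant, so the slice index difference is controlled immediately with $C=0$. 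Your route is more elementary and avoids any group-theoretic machinery, which is a real simplification. One small caveat: the lemma posits only a length metric on $M_F$, and your first step --- that $\sigma$ is $K$--Lipschitz --- is automatic for a smooth (or bi-Lipschitz--Riemannian) metric but is not literally a consequence of the words "length metric"; the paper's \v Svarc--Milnor argument is agnostic about this because it relates $d$ to the word metric without ever needing $\sigma$ itself to be Lipschitz as a map of metric spaces. In the contexts where the lemma is applied the metric is bi-Lipschitz to a Riemannian one, so this is harmless, but it is worth flagging which extra regularity your proof uses. Your observation that the hypotheses on $\varphi$ (fixing $\partial F$ setwise and fixing periodic boundary points) are irrelevant to this lemma is also accurate: the paper's proof does not use them either.
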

\begin{proof}
Let $x_{0}$ be a point on a boundary circle of $\partial{F}$ and $\tilde{x}_{0}$ be a lift of $x_{0}$ in $\tilde{M}_{F}$. We fix generating sets $\mathcal{A}$ and $\mathcal{B}$ of $\pi_1(F,x_0)$ and $\pi_1(M_{F},x_0)$ respectively.
We remark that there is a positive constant $\epsilon >0$ such that for any integer $k$ and for any $z$ in the slice $\tilde{F} \times \{k\}$ of $\tilde{M}_{F}$, there exists $z'$ in the slice $\tilde{F} \times \{k\}$ such that $z'$ is a lift of the base point $x_0$ and $d(z,z') \le \epsilon$. 
Choose $x'$ in the slice $\tilde{F} \times \{n\}$ and $y'$ in the slice $\tilde{F} \times \{m\}$ so that $x'$ and $y'$ are lifts of $x_0$ with $d(x,x') \le \epsilon$ and $d(y,y') \le \epsilon$.

Let $\sigma \colon M_{F} \to S^1$ be the projection of the bundle $M_{F}$. It follows that we have the short exact sequence:
\[
1 \to \pi_1(F,x_0) \to \pi_1(M_{F},x_0) \to \Z \to 1
\]
Since $\sigma_{*}$ is a homomorphism, it is easy to see that there exists $L' >0$ such that 
\[
\abs{\sigma_{*}(g) - \sigma_{*}(g')} \le L'\,\abs{g-g'}_{\mathcal{B}}
\] for all $g, g' \in \pi_1(M_{F},x_0)$. 

Since $\pi_1(M_{F},x_0)$ acts geometrically on $\tilde{M}_{F}$, it follows that there exist constants $A \ge 1$ and $B \ge 0$ such that 
\[
\abs{g-g'}_{\mathcal{B}} \le A \,d(g(\tilde{x}_0),g'(\tilde{x}_0)) + B
\] for all $g, g' \in \pi_1(M_{F},x_0)$.
It follows that $\abs{m-n} \le L'\,A \,d(x',y') + L'\,B$ since $x'$ and $y'$ are lifts of $x_0$. Since $d(x',y') \le d(x,y) + 2\epsilon$, it follows that 
\[
\abs{m-n} \le Ld(x,y) + C
\] where $L= L'\,A$ and $C = L'\,B + 2L'\,A\epsilon$.
\end{proof}

The following lemma can be seen in the proof of Theorem~11.9 in \cite{FLP}.
\begin{lem}
\label{lem:pseudo}
Let $B$ be a surface with nonempty boundary with $\chi(B) <0$. Let $\varphi \colon B \to B$ be a pseudo-Anosov homeomorphism fixing the boundary $\partial{B}$ setwise. Let $\alpha$ be a geodesic such that $\alpha(0)$ and $\alpha(1)$ belong to a boundary circle of $B$ and $\alpha$ is not homotoped to a boundary circle. For any $n \in \N$, let $\gamma_{n}$ be the geodesic connecting the two endpoints of a lift of $\varphi^{n}(\alpha)$ in the universal cover $\tilde{B}$, and let $\beta_{n}$ be the shortest path in $\tilde{B}$ joining two boundary lines containing the endpoints of $\gamma_{n}$. Then 
\begin{enumerate}
   \item $\limsup\limits_{n\rightarrow \infty}\ln{\bigl (\bigabs{\gamma_{n}}_{\tilde{B}} \bigr )} \bigl / n = \lambda >1$
   \item 
   \label{item:superexponentialgrowth}
   $\limsup\limits_{n\rightarrow \infty}\ln d \bigl (\beta_{n}(0),\gamma_{n}(0) \bigr ) \bigl / n = 0$ and $\limsup\limits_{n\rightarrow \infty}\ln d \bigl (\beta_{n}(1),\gamma_{n}(1) \bigr ) \bigl / n = 0$. 
\end{enumerate}
\end{lem}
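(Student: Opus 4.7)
The plan is to exploit the invariant measured foliations and singular flat structure associated to the pseudo-Anosov $\varphi$. Let $(\mathcal{F}^{s},\mu^{s})$ and $(\mathcal{F}^{u},\mu^{u})$ be the stable and unstable foliations with common dilatation $\lambda>1$; they equip $B$ with a singular Euclidean metric in which $\varphi$ is locally affine, stretching $\mathcal{F}^{u}$-leaves by $\lambda$ and contracting $\mathcal{F}^{s}$-leaves by $\lambda^{-1}$. Because $B$ is compact, this flat metric is quasi-isometric to any other length metric on $B$, so the asymptotic growth rates computed below transfer to the metric on $\tilde{B}$ induced from $M_{F}$.

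For claim (1), the hypothesis that $\alpha$ is essential (not homotopic rel $\partial B$ into a boundary circle) forces the transverse measure $h(\alpha)=\int_{\alpha}d\mu^{s}$ to be strictly positive. The $\varphi$-transformation rules $h(\varphi^{n}\alpha)=\lambda^{n}h(\alpha)$ and $v(\varphi^{n}\alpha)=\lambda^{-n}v(\alpha)$, together with the fact that flat length is comparable to the sum of horizontal and vertical transverse measures, give $|\varphi^{n}(\alpha)|\asymp \lambda^{n}h(\alpha)+\lambda^{-n}v(\alpha)$. Since $|\gamma_{n}|_{\tilde{B}}=|\varphi^{n}(\alpha)|_{B}$, this yields $\limsup \ln|\gamma_{n}|/n = \ln\lambda > 0$, which is the desired exponential growth.

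For claim (2), the crucial input is that $\varphi$ fixes periodic points on $\partial B$, so each restriction $\varphi|_{c}$ is a circle homeomorphism with rational rotation number. I would choose a lift $\tilde\varphi$ of $\varphi$ to $\tilde{B}$ that preserves the boundary line $\ell_{0}$ through $\tilde\alpha(0)$, and set $\gamma_{n} := \tilde\varphi^{n}(\tilde\alpha)$. Then $\tilde\varphi|_{\ell_{0}}$ is a lift of a rational-rotation-number circle homeomorphism, so its iterates displace points at most linearly along $\ell_{0}$; in particular $d_{\ell_{0}}\bigl(\tilde\alpha(0),\gamma_{n}(0)\bigr)\le Cn$. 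The point $\beta_{n}(0)$ is the foot on $\ell_{0}$ of the shortest path to $\ell_{1}^{(n)}:=\tilde\varphi^{n}(\ell_{1})$. Although $\ell_{1}^{(n)}$ may recede from $\ell_{0}$ at exponential rate $\lambda^{n}$, working in flat coordinates where $\tilde\varphi$ acts by the diagonal matrix $\operatorname{diag}(\lambda,\lambda^{-1})$ shows that the tangential drift of $\beta_{n}(0)$ along $\ell_{0}$ is only polynomial in $n$. Combined with the linear bound on $\gamma_{n}(0)$, this gives $d\bigl(\beta_{n}(0),\gamma_{n}(0)\bigr)$ bounded by a polynomial in $n$, so $\limsup \ln d\bigl(\beta_{n}(0),\gamma_{n}(0)\bigr)/n=0$. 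The argument for $\gamma_{n}(1)$ is symmetric.

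The main technical obstacle will be establishing the sub-exponential drift of $\beta_{n}(0)$: one must show that the common-perpendicular foot on $\ell_{0}$ moves only polynomially along $\ell_{0}$ as the opposite line $\ell_{1}^{(n)}$ recedes exponentially. This reduces to tracking the horizontal coordinate of $\beta_{n}(0)$ in the flat charts and observing that the exponential factor from $\tilde\varphi$ on the opposite line cancels against the corresponding coordinate of $\gamma_{n}(0)$, leaving only a polynomial residue controlled by the rational boundary rotation number. This is essentially the calculation carried out in the proof of Theorem~11.9 of \cite{FLP}.
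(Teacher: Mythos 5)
The paper gives no argument of its own for this lemma; the entire ``proof'' is the preceding sentence, which points to the proof of Theorem~11.9 in \cite{FLP}. So there is no in-text derivation against which to measure your proposal, and both you and the author ultimately credit the same FLP computation. That said, your sketch fills in substantially more than the paper does, and most of it is sound. Your part~(1) argument is essentially complete: the transverse-measure transformation rules $h(\varphi^n\alpha)=\lambda^n h(\alpha)$, $v(\varphi^n\alpha)=\lambda^{-n}v(\alpha)$, together with the quasi-isometry between the singular-flat and hyperbolic metrics on the compact surface $B$, give $\limsup_n\ln\bigabs{\gamma_n}_{\tilde B}/n=\ln\lambda$. (The paper's ``$=\lambda>1$'' is a slip; what appears is the logarithm of the dilatation, which is positive because $\lambda>1$.)

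Two caveats. First, your argument for part~(2) invokes ``$\varphi$ fixes periodic points on $\partial B$'', which is not a hypothesis of the lemma as stated; it is, however, baked into how $\varphi$ is chosen in Remark~\ref{rem:virtual fiber} and the Set~up of Section~\ref{sec:distortion mixed}, so you are using information the paper has available but did not restate in the lemma. Second, and more importantly, the crux of part~(2) is left at the level of a plausibility argument. The sentence about the exponential factor on the receding line ``cancelling against the corresponding coordinate of $\gamma_n(0)$, leaving only a polynomial residue'' is not a derivation---it is precisely the statement one must prove, namely that the foot of the common perpendicular from $\ell_0$ to $\tilde\varphi^n(\ell_1)$ drifts sub-exponentially along $\ell_0$. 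You acknowledge this and defer to FLP~11.9, which is the identical move the paper makes, so there is no gap relative to the paper; but a self-contained proof would have to actually carry out that flat-coordinate estimate rather than gesture at it. If you do want to write it out, note also that your linear bound on the boundary displacement can be sharpened: once one passes to a power of $\varphi$ that fixes a point on each boundary circle, the lift along $\ell_0$ has uniformly bounded (not merely linear) displacement, which makes the bookkeeping in part~(2) cleaner.
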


\subsection{Metrics on non-geometric 3-manifolds}
\label{rem:metriconmixedmanifold}
Since we compute the distortion of a surface subgroup in non-geometric $3$--manifold group by using geometry of their universal covers (see Corollary~\ref{cor:GeometricDistortion}), we need to discuss the metrics on non-geometric $3$--manifolds that we are going to use. We note that the choice of length metrics does not affect the distortion, so we will choose a convenient metric. 

{\bf Metrics on mixed $3$--manifolds:}
In the rest of this paper, if we are working on the setting of mixed manifolds, the following metric is the metric we will talk about. If $N$ is a mixed manifold, it is shown by Leeb \cite{Leeb95} that $N$ admits a smooth Riemannian metric $d$ of nonpositive sectional curvature with totally geodesic boundary such that $\mathcal{T}$ is totally geodesic and the sectional curvature is strictly negative on each hyperbolic component of $N\setminus \mathcal{T}$. 

{\bf Metrics on simple graph manifolds:}  A \emph{simple graph manifold} $N$ is a graph manifold with the following properties: Each Seifert component is a trivial circle bundle over an orientable surface of genus at least 2. The intersection numbers of fibers of adjacent Seifert components have absolute value 1. It was shown by Kapovich and Leeb that any graph manifold $N$ has a finite cover $\hat{N}$ that is a simple graph manifold \cite{KL98}.

In the rest of this paper, if we are working on the setting of simple graph manifolds, the following metric (described by Kapovich--Leeb \cite{KL98}) will be the metric we will talk about. If $N$ is a simple graph manifold, on each Seifert fibered block $M_{i} = F_{i} \times S^1$ we choose a hyperbolic metric on $F_{i}$ and then equip $M_{i}$ with the product metric $d_{i}$. There is a length metric $d$ on $N$ with the following properties. There is $K>0$ such that for each Seifert fibered block $M_{i}$, we have
\[
\frac{1}{K}d_{i}(x,y) \le d(x,y) \le Kd_{i}(x,y)
\] for all $x$ and $y$ in $M_{i}$.

\begin{rem}
\label{rem:constant rho}
There exists a positive lower bound $\rho$ for the distance between any two distinct JSJ planes in $\tilde{N}$.
\end{rem}

\section{Distortion of surfaces is determined by the almost fiber part}
\label{sec:almostfiber}
The goal in this section is to show that the distortion of the fundamental group of a surface $S$ in the fundamental group of a non-geometric $3$--manifold $N$ can be determined by looking at the distortion of the almost fiber part $\Phi(S)$.

\begin{thm}
\label{thm:distortionalmostfiber}
Let $g \colon S  \looparrowright N$ be a clean surface in a non-geometric $3$--manifold $N$. 
For each component $S_{i}$ of $\Phi(S)$, let $\delta_{S_{i}}$ be the distortion of $\pi_1(S_{i})$ in $\pi_1(N)$. Then the distortion of $H = \pi_1(S)$ in $G= \pi_1(N)$ satisfies
\[
f \preceq \Delta_{H}^{G} \preceq \bar{f}
\]
where
\[
f(n) := \max \bigset{\delta_{S_{i}}(n)}{\text {$S_{i}$ is a component of $\Phi(S)$}}
\] and $\bar{f}$ is the superadditive closure of $f$.
\end{thm}

\begin{rem}
The definition of $f$ depends on choices of generating sets for $\pi_1(N)$ and each $\pi_1(S_{i})$. In general it is unknown whether $f \sim \bar{f}$ for an arbitrary distortion function $f$. But in Section~\ref{sec:distortion mixed} we will see this is true because each function $\delta_{S_{i}}$ is either linear, quadratic, exponential or double exponential.
\end{rem}
We use the convention that $f(n)=0$ if $\Phi(S) = \emptyset$. Note that the zero function is equivalent to a linear function by Definition~\ref{def:equivalentfunction}. Therefore we obtain the following corollary.

\begin{cor}
\label{cor:linear distortion}
Let $g \colon S \looparrowright N$ be a clean surface in a non-geometric $3$--manifold $N$. 
If the almost fiber part $\Phi(S)$ is empty then the distortion of $\pi_1(S)$ in $\pi_1(N)$ is linear.
\end{cor}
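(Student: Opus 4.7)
The plan is to obtain this corollary as a direct consequence of Theorem~\ref{thm:distortionalmostfiber}. When $\Phi(S)=\emptyset$, the function $f$ appearing there is a maximum over the empty collection of component distortions; by the convention stated in the preamble to the corollary, we set $f\equiv 0$. Its superadditive closure $\bar f$ is then also identically zero, since every decomposition $n=n_1+\cdots+n_\ell$ contributes $0+\cdots+0=0$ to the maximum. Applying the upper bound in Theorem~\ref{thm:distortionalmostfiber} yields $\Delta_H^G \preceq \bar f \equiv 0$.

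The remaining step is to note that the zero function lies in the equivalence class of linear functions with respect to $\sim$ from Definition~\ref{def:equivalentfunction}. Indeed the domination relation $f\preceq g$ permits an additive term $Dx+E$ on the right, so $\Delta_H^G \preceq 0$ unwinds to an honest affine upper bound $\Delta_H^G(x)\le A\cdot 0 + Dx + E = Dx+E$. For the matching lower bound, any distortion function of an infinite finitely generated subgroup is bounded below by a linear function: writing each generator of $H$ as a word of length at most $K$ in the generators of $G$ gives $|h|_{\mathcal S}\le K\,|h|_{\mathcal A}$, so for $h\in H$ with $|h|_{\mathcal A}=m$ we have $|h|_{\mathcal S}\le Km$, and hence $\Delta_H^G(Km)\ge m$. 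Combining the two bounds shows $\Delta_H^G$ is equivalent to the identity, i.e.\ linear.

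There is no genuine obstacle here; the corollary is purely a bookkeeping consequence of Theorem~\ref{thm:distortionalmostfiber} together with the observation that the zero function is equivalent to the identity under $\sim$. All the substantive work has already been packaged into the statement of Theorem~\ref{thm:distortionalmostfiber}, and the degenerate case $\Phi(S)=\emptyset$ simply collapses the bound to a linear one.
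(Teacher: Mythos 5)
Your proof is correct and takes essentially the same route as the paper: plug $f\equiv 0$ into Theorem~\ref{thm:distortionalmostfiber} and observe that the zero function is equivalent to a linear function under the equivalence relation of Definition~\ref{def:equivalentfunction}. (The separate lower bound argument via generators is redundant, since $0\sim\mathrm{id}$ already implies $\mathrm{id}\preceq \bar f \preceq \Delta_H^G$ by transitivity, but it is not wrong.)
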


Regarding Theorem~\ref{thm:distortionalmostfiber}, the proof $f \preceq \Delta_{H}^{G}$ is not hard to see, meanwhile the proof $\Delta_{H}^{G} \preceq \bar{f}$ requires more work. We sketch here the idea of the proof of the upper bound case. We fix a lifted point $\tilde{s}_{0}$ in $\tilde{S}$, and let $h \in \pi_1(S,s_0)$ such that the distance of $\tilde{s}_{0}$ and $h(\tilde{s}_{0})$ in $\tilde{N}$ is less than $n$. We will construct a path $\gamma'$ in $\tilde{N}$ connecting $\tilde{s}_{0}$ to $h(\tilde{s}_{0})$ such that $\abs{\gamma'}$ is bounded above by a linear function in term of $n$. We then construct a path $\beta$ in $\tilde{S}$ connecting $\tilde{s}_{0}$ to $h(\tilde{s}_{0})$ such that $\beta$ stays close to $\gamma'$ every time they travel in the same block containing a piece which is either vertical or geometrically finite (see Lemma~\ref{lem:verticalgraphmanifold} and Lemma~\ref{lem:geometricallyfinitehyper}).

\begin{lem}
\label{lem:verticalgraphmanifold}
Let $F$ be a connected compact hyperbolic surface with non-empty boundary. Let $M = F \times S^1$.
Let $g \colon (S,s_0) \looparrowright (M,x_0)$ be an essential, vertical surface. We equip $M$ with a length metric and lift this metric to the metric $d$ in the universal covers $\tilde{M}$. Then there exists a constant $R$ such that the following holds.
Let $P$ and $P'$ be two distinct boundary planes in $\tilde{M}$ such that $P \cap \tilde{S} \neq \emptyset$ and $P' \cap \tilde{S} \neq \emptyset$. Let $x$ and $y$ be points in $P$ and $P'$, and $\alpha$ be a geodesic in $(\tilde{M},d)$ connecting $x$ to $y$. Then there exists a path $\beta$ in $\tilde{S}$ connecting a point in $P \cap \tilde{S}$ to a point in $P' \cap \tilde{S}$ such that $\beta(0), \beta(1) \in \mathcal{N}_{R}(\alpha)$.
\end{lem}
\begin{proof}
Since $S$ is orientable and vertical, it follows that $S$ is an annulus. The map $g$ is a vertical map and thus the image $g(S)$ in $M$ is $\gamma \times S^1$ where $\gamma$ is a proper arc in the base surface $F$ of $M$ (i.e, $\gamma$ could not be homotoped to a path in a boundary circle).
We fix a hyperbolic metric $d_{F}$ on $F$ such that the boundary is totally geodesic. We lift the metric $d_{F}$ to the metric $d_{\tilde{F}}$ in the universal cover $\tilde{F}$ of $F$. We equip $\tilde{M} = \tilde{F} \times \R$ with the product metric $d'$. We note that the identity map $(\tilde{M},d) \to (\tilde{M},d')$ is a  $(K,C)$--quasi-isometric for some constant $K$ and $C$. In $\tilde{M}$, we note that $\tilde{S}$ is $\tilde{\gamma} \times \R$ where $\tilde{\gamma}$ is a path lift of $\gamma$ in $\tilde{F}$. 

We note that $(\tilde{F}, d_{\tilde{F}})$ is bilipschitz homeomorphic to a fattened
tree (see the paragraph after Lemma~1.1 \cite{NeuBeh08}). Thus, there exists $A_{0} >0$ such that the following holds. Let $\ell$ and $\ell'$ be two distinct boundary lines in $\tilde{F}$. Let $[p,p']$ be a geodesic of shortest length from $\ell$ to $\ell'$. If $\tau$ is a path in $\tilde{F}$ connecting a point in $\ell$ to a point in $\ell'$ then $[p,p'] \subset \mathcal{N}'_{A_0}(\tau)$ where $\mathcal{N}'_{A_0}(\tau)$ is the $A_0$--neighborhood of $\tau$ with respect to $d_{\tilde{F}}$--metric.

Let $\abs{\tilde{\gamma}}_{\tilde{F}}$ be the length of $\tilde{\gamma}$ with respect to $d_{\tilde{F}}$--metric. Let $L = 2\abs{\tilde{\gamma}}_{\tilde{F}} + 2A_0$. Let $\alpha_{\tilde{F}}$ be the projection of $\alpha$ on the first factor $\tilde{F}$ of $\tilde{M}$. We first show that $\tilde{\gamma} \subset \mathcal{N}'_{L}(\alpha_{\tilde{F}})$. Indeed, let $\ell_1$ and $\ell_2$ be the boundary lines in $\tilde{F}$ such that $\tilde{\gamma}(0) \in \ell_1$ and $\tilde{\gamma}(1) \in \ell_2$. Let $[p_1,p_2]$ be a geodesic of shortest length from $\ell_1$ to $\ell_2$. According to the previous paragraph, we have $[p_1,p_2] \subset \mathcal{N}'_{A_0}(\alpha_{\tilde{F}})$ and $[p_1,p_2] \subset \mathcal{N}'_{A_0}(\tilde{\gamma})$. Since $p_1 \in \mathcal{N}'_{A_0}(\tilde{\gamma})$, it follows that there exists $a \in \tilde{\gamma}$ such that $d_{\tilde{F}}(p_1,a) \le A_0$. Thus $d_{\tilde{F}}(\tilde{\gamma}(0),p_1) \le d_{\tilde{F}}(\tilde{\gamma}(0), a) + d_{\tilde{F}}(a,p_1) \le \abs{\tilde{\gamma}}_{\tilde{F}} + d_{\tilde{F}}(a,p_1) \le \abs{\tilde{\gamma}}_{\tilde{F}} +A_0$. For any $x \in \tilde{\gamma}$, we have $d_{\tilde{F}}(x,p_1) \le d_{\tilde{F}}(x,\tilde{\gamma}(0)) + d_{\tilde{F}}(\tilde{\gamma}(0), p_1) \le \abs{\tilde{\gamma}}_{\tilde{F}} + d_{\tilde{F}}(\tilde{\gamma}(0), p_1) \le \abs{\tilde{\gamma}}_{\tilde{F}} + \abs{\tilde{\gamma}}_{\tilde{F}} +A_0 =2\abs{\tilde{\gamma}}_{\tilde{F}} +A_0 $. It follows that $\tilde{\gamma} \subset \mathcal{N}'_{2\abs{\tilde{\gamma}}_{\tilde{F}} +A_0}([p_1,p_2])$. Using $\tilde{\gamma} \subset \mathcal{N}'_{2\abs{\tilde{\gamma}}_{\tilde{F}} +A_0}([p_1,p_2])$ and $[p_1,p_2] \subset \mathcal{N}'_{A_0}(\alpha_{\tilde{F}})$ we have $\tilde{\gamma} \subset \mathcal{N}'_{2\abs{\tilde{\gamma}}_{\tilde{F}} +2A_0}(\alpha_{\tilde{F}}) = \mathcal{N}'_{L}(\alpha_{\tilde{F}})$. 

Since $\tilde{\gamma} \subset \mathcal{N}'_{L}(\alpha_{\tilde{F}})$, it follows that there exist $u_0 \in \alpha_{\tilde{F}}$ and $u_1 \in \alpha_{\tilde{F}}$ such that $d_{\tilde{F}}(\tilde{\gamma}(0),u_0) \le L$ and $d_{\tilde{F}}(\tilde{\gamma}(1),u_1) \le L$. Choose $s_{0}, s_1 \in \R$ such that $(u_0, s_0), (u_1,s_1) \in \alpha$. It follows that $d \bigl ((\tilde{\gamma}(0),s_0), (u_0,s_0) \bigr ) \le Kd' \bigl ((\tilde{\gamma}(0),s_0), (u_0,s_0) \bigr ) + C = Kd_{\tilde{F}}(\tilde{\gamma}(0),u_0) + C \le KL +C$. Hence $(\tilde{\gamma}(0),s_0) \in \mathcal{N}_{KL+C}(\alpha)$. Similarly, we have $(\tilde{\gamma}(1),s_1) \in \mathcal{N}_{KL+C}(\alpha)$. Note that $(\tilde{\gamma}(0),s_0)$ and $(\tilde{\gamma}(1),s_1)$ are in $\tilde{S} = \tilde{\gamma} \times \R$. Let $\beta$ be a path in $\tilde{S}$ connecting $(\tilde{\gamma}(0),s_0)$ to $(\tilde{\gamma}(1),s_1)$. The end points of $\beta$ are in $\mathcal{N}_{R}(\alpha)$.
\end{proof}

 \begin{lem}
 \label{lem:geometrically finite parallel lines}
 Let $g \colon (S,s_0) \looparrowright (M,x_0)$ be a essential, geometrically finite surface in a hyperbolic manifold $M$ with nonempty toroidal boundary such that $\partial S \neq \emptyset$. Let $\tilde{g} \colon (\tilde{S},\tilde{s}_{0}) \hookrightarrow (\tilde{M},\tilde{x}_{0})$ be a lift of $g$. Then for any distinct boundary lines $\ell$ and $\ell'$ of $\partial \tilde{S}$, the images $\tilde{g}(\ell)$ and $\tilde{g}(\ell')$ lie in different boundary planes of $\partial \tilde{M}$.
 \end{lem}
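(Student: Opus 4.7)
The plan is to argue by contradiction, reducing the geometric claim about boundary planes to an algebraic statement about peripheral subgroups and exploiting the fact that $\pi_1(S)$ is free.

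Suppose for contradiction that distinct boundary lines $\ell, \ell'$ of $\partial \tilde S$ satisfy $\tilde g(\ell) \cup \tilde g(\ell') \subset P$ for a single boundary plane $P$ of $\partial \tilde M$. Let $H = \Stab_{\pi_1(S)}(\ell)$ and $H' = \Stab_{\pi_1(S)}(\ell')$; these are peripheral, and hence infinite cyclic. Let $\Pi = \Stab_{\pi_1(M)}(P)$; since $P$ covers a boundary torus, $\Pi \cong \Z^2$. Because $\tilde g$ is $\pi_1(S)$--equivariant (where $\pi_1(S)$ acts on $\tilde M$ through $g_*$) and because distinct boundary planes of $\tilde M$ are disjoint, any element of $H$ must send $P$ to a boundary plane containing $\tilde g(\ell)$, which forces $g_*(H) \subset \Pi$; similarly $g_*(H') \subset \Pi$.

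Next I verify two purely algebraic points. First, since $\partial S \neq \emptyset$ the group $\pi_1(S)$ is free, and peripheral cyclic subgroups of a free surface group are maximal cyclic. Second, $H \neq H'$: if a nontrivial deck transformation stabilized both $\ell$ and $\ell'$ setwise, it would be a hyperbolic isometry of $\tilde S$ with two distinct invariant axes, which is impossible. Combining these two facts, $H$ and $H'$ are two \emph{distinct} maximal cyclic subgroups of a free group, and therefore $\langle H, H'\rangle$ is freely generated by generators of $H$ and $H'$, hence is a rank--$2$ free group, and in particular non-abelian.

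The contradiction follows immediately: $g_*$ is injective (since $g$ is essential, hence $\pi_1$--injective), so $g_*(\langle H, H'\rangle)$ is isomorphic to the non-abelian group $\langle H, H'\rangle$. But this image lies inside the abelian group $\Pi \cong \Z^2$, which is impossible. Hence no such $P$ exists, proving the lemma.

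I expect the routine obstacle to be nailing down the equivariance step cleanly--namely verifying that $g_*(H) \subset \Pi$ really follows from $\tilde g(\ell) \subset P$--which requires using that distinct boundary planes in $\tilde M$ are disjoint so that $P$ is the unique boundary plane containing $\tilde g(\ell)$. Note that the hypothesis ``geometrically finite'' is not used directly in this argument; the proof only needs that $g$ is essential with $\partial S \subset \partial M$, which is part of the clean surface setup in which this lemma will be applied.
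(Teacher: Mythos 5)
Your proof is correct, and it follows a genuinely different path from the paper's own argument. The paper's proof is metric: it uses geometric finiteness to get a quasi-isometric embedding $\tilde g\colon\tilde S\to\tilde M$, shows that distinct boundary lines of $\tilde S$ have infinite Hausdorff distance in $\tilde S$ (via Lemma~3.2 of \cite{Hruska-Nguyen}), observes that two disjoint lines in a single boundary plane of $\tilde M$ must be parallel (so have commensurable stabilizers in $\Stab(\tilde T)\cong\Z^2$ and therefore finite Hausdorff distance in $\tilde M$), and derives a contradiction. Your argument bypasses all of this: from $\tilde g(\ell)\cup\tilde g(\ell')\subset P$ and $\pi_1(S)$-equivariance you place $g_*\langle H,H'\rangle$ inside the abelian group $\Stab(P)\cong\Z^2$, while on the domain side $H$ and $H'$ are distinct maximal cyclic subgroups of the free group $\pi_1(S)$ and so generate a rank-$2$ free subgroup; injectivity of $g_*$ then gives the contradiction. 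This is more elementary (pure subgroup algebra plus deck-transformation equivariance, no quasiconvexity or Hausdorff-distance estimates) and, as you correctly observe, it does not use geometric finiteness at all, only that $g$ is essential with $g(\partial S)\subset\partial M$. In fact it would apply uniformly to the geometrically infinite case as well, where the paper instead invokes the fibered structure and Remark~\ref{rem:flowlineintersectone}.

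Two small points worth tightening. First, your reduction to a rank-$2$ free group tacitly assumes $\pi_1(S)$ is free of rank at least $2$, i.e.\ that $S$ is not a disk or an annulus. This is true in the setting of the lemma (a hyperbolic $3$-manifold with toroidal boundary admits no essential annuli, and essentialness rules out disks), and the paper's proof makes this exclusion explicit; you should do the same, since otherwise the step that $H\neq H'$ already collapses for an annulus. Second, the phrase ``freely generated by generators of $H$ and $H'$'' is asserted rather than proved; the clean route is: $\langle H,H'\rangle$ is free of rank at most~$2$, rank~$0$ is impossible, and rank~$1$ would force $H=H'$ by maximality of each, so the rank is exactly~$2$ and in particular $\langle H,H'\rangle$ is non-abelian, which is all you need.
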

 
\begin{proof}
Suppose by the way of contradiction that $\tilde{g}(\ell)$ and $\tilde{g}(\ell')$ are lines in the same boundary plane $\tilde{T}$.
Since $g \colon (S,s_0) \looparrowright (M,x_0)$ is essential, $S$ could not be annulus or a disk. Thus, $S$ is a hyperbolic surface. Let $d_{S}$ be a hyperbolic metric on $S$ such that the boundary is totally geodesic and let $d_{M}$ be a non-positively curved metric on the manifold with boundary $M$. We lift these metrics to metrics $d_{\tilde{S}}$ and $d_{\tilde{M}}$ in the universal covers $\tilde{S}$ and $\tilde{M}$ respectively. Since $g \colon (S,s_0) \looparrowright (M,x_0)$ is geometrically finite, it follows that $\tilde{g} \colon (\tilde{S},d_{\tilde{S}}) \hookrightarrow (\tilde{M},d_{\tilde{M}})$ is an $(L,C)$--quasi-isometric embedding for some constant $L$ and $C$.

Since $\tilde{g}$ is an embedding, it follows that $\tilde{g}(\ell)$ and $\tilde{g}(\ell')$ are disjoint lines in $\tilde{T}$. We note that, on the one hand the Hausdorff distance of two sets $\ell$ and $\ell'$ with respect to $d_{\tilde{S}}$--metric is infinite (this follows from Lemma~3.2 in \cite{Hruska-Nguyen}). On the other hand, the Hausdorff distance of two sets $\tilde{g}(\ell)$ and $\tilde{g}(\ell')$ with respect to $d_{\tilde{M}}$--metric is finite. (this follows from the fact that $A= stab(\tilde{T})$ in $\pi_1(M)$ acts isometrically on $\tilde{T}$ and $stab(\tilde{g}(\ell))$ and $stab(\tilde{g}(\ell'))$ are commensurable in $A$). This could not happen since $\tilde{g}$ is a quasi-isometric embedding.
\end{proof}

\begin{rem}
Lemma~\ref{lem:geometrically finite parallel lines} can be proven by using malnormality of the peripheral subgroups of $\pi_1(S)$.
\end{rem}

\begin{lem}
\label{lem:geometricallyfinitehyper}
Let $M$ be a hyperbolic manifold with nonempty toroidal boundary. Let $g \colon (S,s_0) \looparrowright (M,x_0)$ be a essential, geometrically finite surface such that $\partial S \neq \emptyset$. Equip $M$ with a non-positively curved metric and lift this metric to the universal cover $\tilde{M}$ denoted by $d$.
Then there exists a constant $R$ such that the following holds.
Let $P$ and $P'$ be two distinct boundary planes in $\tilde{M}$ such that $P \cap \tilde{S} \neq \emptyset$ and $P' \cap \tilde{S} \neq \emptyset$. Let $x$ and $y$ be points in $P$ and $P'$ respectively, and $\alpha$ be a geodesic in $\tilde{M}$ connecting $x$ to $y$. Then there is a path $\beta$ in $\tilde{S}$ connecting a point in $P \cap \tilde{S}$ to a point in $P' \cap \tilde{S}$ such that $\beta(0), \beta(1) \in  \mathcal{N}_{R}(\alpha)$.
\end{lem}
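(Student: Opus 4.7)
The plan is to exploit geometric finiteness, which gives that $\tilde g : \tilde S \to \tilde M$ is a quasi-isometric embedding, together with the previously established Lemma~\ref{lem:geometrically finite parallel lines} that each boundary plane meets $\tilde S$ in at most a single line. Fix a hyperbolic metric $d_{\tilde S}$ on $\tilde S$ with totally geodesic boundary. By Lemma~\ref{lem:geometrically finite parallel lines}, $P \cap \tilde S$ and $P' \cap \tilde S$ are single boundary lines $\ell$ and $\ell'$ of $\tilde S$, and $\tilde g : (\tilde S, d_{\tilde S}) \hookrightarrow (\tilde M, d)$ is an $(L,C)$-quasi-isometric embedding.

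I would choose $p \in \ell$ to be the CAT(0) nearest-point projection of $x$ onto $\ell$ inside the flat totally geodesic plane $P$, and analogously $q \in \ell'$ using $y$ and $P'$. Let $\sigma$ be a $d_{\tilde S}$-geodesic joining $p$ and $q$ in $\tilde S$ and set $\beta := \tilde g(\sigma)$; this is an $(L,C)$-quasi-geodesic in $\tilde M$ from $p$ to $q$. To show $\beta \subset \mathcal{N}_R(\alpha)$ for a uniform $R$, I would argue in two steps. First, in the CAT(0) space $\tilde M$, the geodesic $\alpha_0$ from $p$ to $q$ fellow-travels $\alpha$ up to a uniform constant, by convexity of the distance function and the fact that $p$, $q$ are the flat-plane projections of $x$, $y$ onto $\ell$, $\ell'$. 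Second, $\beta$ fellow-travels $\alpha_0$ up to a uniform constant by a Morse-type argument: since $\pi_1(M)$ is hyperbolic relative to its cusp subgroups and $\pi_1(S)$ is relatively quasi-convex from geometric finiteness, pass to the cusped hyperbolic thickening $\tilde M^{\ast}$ where the Morse lemma applies to the quasi-geodesic $\beta$ and the geodesic $\alpha_0$, and transfer the bound back to $\tilde M$ using that the relevant portions of $\beta$ and $\alpha_0$ lie where the two metrics are bi-Lipschitz comparable.

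The main obstacle is the second tracking step: $\tilde M$ with its CAT(0) metric is not Gromov hyperbolic due to the flat peripheral planes, so Morse stability of quasi-geodesics fails directly in $\tilde M$. The technology for overcoming this is the cusped hyperbolization (or equivalent coned-off model) from the relative-hyperbolicity literature, combined with a careful verification that the portions of $\beta$ and $\alpha_0$ that matter stay a bounded distance from the deep cusp regions so that distance estimates in $\tilde M^{\ast}$ transfer faithfully to distance estimates in $\tilde M$. A secondary subtlety is the first step, where one must ensure uniform control of the endpoint discrepancy between $(p,q)$ and $(x,y)$ in the flat planes; this is where the specific structure of how the lemma is invoked in the sequel (with $x, y$ arising as intersection points of a path already close to $\tilde S$) plays a role.
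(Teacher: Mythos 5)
The central gap is in your first tracking step. You define $p$ as the nearest-point projection of $x$ onto $\ell = P\cap\tilde S$ inside the flat plane $P$, and then assert that the geodesic $\alpha_0 = [p,q]$ fellow-travels $\alpha = [x,y]$ ``by convexity of the distance function.'' Convexity of the distance function in a $\CAT(0)$ space only bounds $\sup_t d\bigl(\alpha_0(t),\alpha(t)\bigr)$ by $\max\{d(p,x),\,d(q,y)\}$, and $d(x,p)$ can be arbitrarily large: $x$ may lie anywhere in the Euclidean plane $P$, and its projection onto a fixed line $\ell$ of $P$ need not be near $x$. Concretely (working in coordinates on $P\cong\R^2$ with $\ell$ the horizontal axis and the shortest approach to $P'$ occurring near the origin), taking $x = (T,T)$ gives $p=(T,0)$; for large $T$ the point $p$ lies at distance of order $T$ from the geodesic $\alpha$, which runs from $(T,T)$ toward the origin before exiting $P$. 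So your $\beta$ already fails to be $R$-close to $\alpha$ at its starting point, for any uniform $R$. You flag precisely this as a ``secondary subtlety'' to be resolved by restricting to the situation in which the lemma is later applied, but the lemma is stated and used for arbitrary $x\in P$, $y\in P'$, and a proof must cover that.

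What the paper does differently is to take $p,q$ to be the endpoints of the \emph{shortest} geodesic between $P$ and $P'$, and then invoke the isolated-flats property ($\CAT(0)$ with isolated flats, Proposition~8 of \cite{HK09}): \emph{every} geodesic from $P$ to $P'$ passes within a uniform $\epsilon_1$ of both $p$ and $q$. That single statement is the missing input; $\CAT(0)$ convexity alone cannot replace it. With this choice of $p,q$, one checks (quasiconvexity of $\tilde S$ plus the same isolated-flats estimate applied to a geodesic $[a,b]$ with $a\in P\cap\tilde S$, $b\in P'\cap\tilde S$) that $p$ and $q$ lie within a uniform distance of $\tilde S$ as well as of $P$ and $P'$; Lemma~\ref{lem:HruskaProp9.4} (the coarse-intersection lemma, which your proposal never uses) then upgrades this to points $u\in P\cap\tilde S$ and $v\in P'\cap\tilde S$ uniformly close to $p$ and $q$. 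Your second tracking step is also a more delicate route than necessary: passing to the cusped hyperbolic thickening introduces real bookkeeping issues (the $\CAT(0)$ geodesic $[p,q]$ is not a geodesic in the cusped metric, and the quasi-geodesic constants of $\tilde g(\sigma)$ degrade), all of which the paper sidesteps by using $\epsilon_0$-quasiconvexity of $\tilde S$ in $(\tilde M,d)$ directly: the $\CAT(0)$ geodesic $[u,v]$ stays in $\nbd{\tilde S}{\epsilon_0}$, so one can build $\beta\subset\tilde S$ near $[u,v]$, and then $[u,v]$ is close to $[p,q]$ (convexity from endpoints) which is close to $\alpha$ (isolated flats).
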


\begin{proof}
Let $G = \pi_1(M,x_0)$ and $H = \pi_1(S,s_0)$. Let $\PP$ be the collection of fundamental groups of tori boundary of $M$. Since $g \colon (S,s_0) \looparrowright (M,x_0)$ is geometrically finite, it follows that $\pi_1(S,s_0)$ is relatively quasiconvex in the relatively hyperbolic group $(G,\PP)$ (see Corollary~1.6 in \cite{Hruska2010}).
Since $d$ is a complete non-positively curved metric, it follows from Cartan-Hadamard Theorem that $(\tilde{M},d)$ is a $\CAT(0)$ space. It also follows from Corollary~1.6 in \cite{Hruska2010} that the orbit space $\pi_1(S,s_0)(\tilde{x}_{0})$ is quasiconvex in $(\tilde{M},d)$. It follows that $\tilde{S}$ is $\epsilon_{0}$--quasiconvex in $(\tilde{M},d)$ for some positive constant $\epsilon_0$.

Applying Lemma~\ref{lem:HruskaProp9.4} to the surface subgroup and the fundamental group of each torus boundary, we have the following fact: For any $r>0$, there exists $r' =r'(r)>0$ such that whenever $x \in \mathcal{N}_{r}(\tilde{T}) \cap \mathcal{N}_{r}(\tilde{S})$ and $\tilde{T}$ is an arbitrary boundary plane of $\tilde{M}$ with nonempty intersection with $\tilde{S}$, then $x \in \mathcal{N}_{r'}(\tilde{T} \cap \tilde{S})$.

We note that $(\tilde{M},d)$ is a $\CAT(0)$ space with isolated flats. Let $\epsilon_{1}$ be the positive constant given by Proposition~8 \cite{HK09}. Let $[p,q]$ be a geodesic of shortest length from $P$ to $P'$. Then every geodesic from $P$ to $P'$ must come within a distance $\epsilon_{1}$ of both $p$ and $q$. Since $\alpha$ is a geodesic in $\tilde{M}$ connecting $x \in P$ to $y \in P'$, it follows that $\{p,q\} \in \mathcal{N}_{\epsilon_1}(\alpha)$. Moreover, there exist points $x'$ and $y'$ in a geodesic $\gamma$ from $P \cap \tilde{S}$ to  $P' \cap \tilde{S}$ such that $d(x',p) \le \epsilon_{1}$ and $d(y',q) \le \epsilon_{1}$. Hence $x' \in \mathcal{N}_{\epsilon_1}(P)$ and $y' \in \mathcal{N}_{\epsilon_1}(P')$. We note that the end points of $\gamma$ belong to $\tilde{S}$. Using quasiconvexity of $\tilde{S}$, we have $x',y' \in \mathcal{N}_{\epsilon_0}(\tilde{S})$. Thus there exists a constant $\epsilon_{2}$ depending on $\epsilon_{0}$ and $\epsilon_{1}$ such that $x' \in \mathcal{N}_{\epsilon_2}(P) \cap \mathcal{N}_{\epsilon_2}(\tilde{S})$ and $y' \in \mathcal{N}_{\epsilon_2}(P') \cap \mathcal{N}_{\epsilon_2}(\tilde{S})$ (we may choose $\epsilon_2 = \epsilon_0 + \epsilon_1$).
Let $r' = r'(\epsilon_2)$ be the constant given in the previous paragraph with respect to $\epsilon_2$. It follows that $x' \in \mathcal{N}_{r'}(P \cap \tilde{S})$ and $y' \in \mathcal{N}_{r'}(P' \cap \tilde{S})$. Thus, $d(x',u) \le r'$ and $d(y',v) \le r'$ for some points: $u \in P \cap \tilde{S}$ and $v \in P' \cap \tilde{S}$. Let $\beta$ be a path in $\tilde{S}$ connecting $u$ to $v$. Since $d(\beta(0),p) = d(u,p) \le d(u,x') +d(x',p) \le r'+\epsilon_1$ and $p \in \mathcal{N}_{\epsilon_1}(\alpha)$, it follows that $\beta(0) \in \mathcal{N}_{r'+2\epsilon_1}(\alpha)$. Similarly, since  $d(\beta(1),q) = d(v,q) \le d(v,y') +d(y',q) \le r'+\epsilon_1$ and $q \in \mathcal{N}_{\epsilon_1}(\alpha)$, it follows that $\beta(1) \in \mathcal{N}_{r'+2\epsilon_1}(\alpha)$. Let $R =r'+2\epsilon_1$, the lemma is confirmed.

\end{proof}

Let $g \colon S \looparrowright N$ be the immersion in the statement of Theorem~\ref{thm:distortionalmostfiber}.

\begin{defn}
\label{defn:map of trees}
Lift the JSJ decomposition of the manifold $N$ to the universal cover $\tilde N$, and let $\mathbf T_N$ be the tree dual to this decomposition of $\tilde N$.
Lift the collection $\mathcal T_g$ to the universal cover $\tilde S$.  The tree dual to this decomposition of $\tilde S$ will be denoted by $\mathbf T_S$. The map $\tilde g$ induces a map $\zeta \colon{\mathbf{T}_S} \to \mathbf{T}_N$.
\end{defn}

\begin{rem}
\label{rem:flowlineintersectone}
For each geometrically infinite piece $\tilde{B}$ in $\tilde{S}$, let $\tilde{M}$ be the block of $\tilde{N}$ such that $\tilde{g}(\tilde{B}) \subset \tilde{M}$. By Remark~\ref{rem:virtual fiber}, the immersion $B \looparrowright M$ lifts to an embedding (up to homotopy) in a finite cover $M_B$ of $M$ which is fibered over circle with a fiber $B$. Let $\mathcal{L}_{M_B}$ be the suspension flow on $M_B$. We note that $\tilde{B}$ meets every flow line of $\tilde{\mathcal{L}}_{M_B}$ once. 
\end{rem}

\begin{prop}
\label{prop:TreeBijective}
The map $\zeta$ is injective. 
\end{prop}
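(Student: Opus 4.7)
The plan is to show that the simplicial map $\zeta$ does not fold at any vertex of $\mathbf{T}_S$. Since both $\mathbf{T}_S$ and $\mathbf{T}_N$ are simplicial trees, and each edge of $\mathbf{T}_S$ (a lift of a curve in $\mathcal{T}_g$) is sent by $\zeta$ to a well-defined edge of $\mathbf{T}_N$ (namely the unique JSJ plane of $\tilde{N}$ containing that lift), injectivity of $\zeta$ is equivalent to the following statement: for every vertex $v$ of $\mathbf{T}_S$, corresponding to a piece $\tilde{B}$ of $\tilde{S}$, any two distinct boundary lines of $\tilde{B}$ that lie in JSJ planes of $\tilde{N}$ must lie in \emph{distinct} JSJ planes. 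Indeed, any failure of injectivity would produce a reduced path in $\mathbf{T}_S$ whose $\zeta$-image in the tree $\mathbf{T}_N$ backtracks, and a backtracking is exactly a folding at some vertex.

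Suppose for contradiction that distinct boundary lines $\ell_1$ and $\ell_2$ of a piece $\tilde{B}$ lie in a common JSJ plane $\tilde{T}$ of $\tilde{N}$. I would argue a contradiction case-by-case on the type of the piece $B$.

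\emph{Vertical piece in a Seifert block.} Here $B$ is an essential annulus of the form $\gamma \times S^1$ in $M$, where $\gamma$ is an essential arc in the base surface $F$. Then $\tilde{B}$ is a strip $\tilde{\gamma} \times \R$ for a lift $\tilde{\gamma}$ of $\gamma$. Essentiality of $\gamma$ forces the two endpoints of $\tilde{\gamma}$ to lie on distinct boundary lines of $\tilde{F}$, so $\ell_1$ and $\ell_2$ lie in distinct boundary planes of $\tilde{M}$, a contradiction.

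\emph{Horizontal piece in a Seifert block, or geometrically infinite piece in a hyperbolic block.} By Remark~\ref{rem:virtual fiber} there is a mapping torus $M_B$ of a homeomorphism of $B$ into which $g|_B$ lifts (up to homotopy) as an embedded fiber. By Remark~\ref{rem:flowlineintersectone}, the lift $\tilde{B}$ meets each leaf of the lifted suspension flow $\tilde{\mathcal{L}}$ in exactly one point. On the boundary plane $\tilde{T}$, the leaves of $\tilde{\mathcal{L}}$ form a family of parallel lines in the direction of the degeneracy slope (Definition~\ref{defn:degeneracyslope}), whereas each $\ell_i$ is a lift of a boundary component of the fiber and hence has slope transverse to the degeneracy slope. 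Consequently $\ell_1$ and $\ell_2$ are parallel lines on $\tilde{T}$ transverse to the flow direction, and any flow line crossing the strip bounded by $\ell_1$ and $\ell_2$ on $\tilde{T}$ meets $\tilde{B}$ in at least two points, contradicting Remark~\ref{rem:flowlineintersectone}.

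\emph{Geometrically finite piece in a hyperbolic block.} This case is immediate from Lemma~\ref{lem:geometrically finite parallel lines}, which says exactly that distinct boundary lines of $\partial \tilde{B}$ lie in distinct boundary planes of the hyperbolic block.

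The main technical point I expect to have to verify carefully is the transversality of the fiber-boundary slope and the degeneracy slope on each boundary torus of the mapping torus $M_B$; once this is in hand, the flow-line argument and the quoted lemmas close the proof in each case.
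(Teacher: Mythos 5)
Your proposal is correct and follows the same overall strategy as the paper's proof: reduce to local injectivity of a simplicial map between trees (Stallings), then carry out a case analysis over the four piece types using the observation that two distinct boundary lines of the same piece $\tilde{B}$ cannot land in the same JSJ plane $\tilde{T}$. For the geometrically finite case you invoke exactly Lemma~\ref{lem:geometrically finite parallel lines}, same as the paper; for the geometrically infinite case you use Remark~\ref{rem:flowlineintersectone}, same as the paper.

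Where you genuinely diverge is in the horizontal and vertical cases. The paper quotes Lemma~6.3 of Hruska--Nguyen for the horizontal case and simply says the vertical case ``contradicts essentiality'' without elaboration. You instead give a unified suspension-flow argument covering both the horizontal and geometrically infinite cases (using that the monodromy is the identity for horizontal pieces, so the flow is the circle-fiber direction), and you spell out the annulus argument for vertical pieces. This is more self-contained and, for the horizontal case, arguably cleaner than citing an external lemma — the transversality of the fiber boundary slope with the degeneracy slope (your flagged ``main technical point'') holds because the degeneracy slope has a nonzero component in the monodromy direction while the fiber boundary does not, so they are not parallel on the boundary torus. Two small issues worth tightening: Remark~\ref{rem:flowlineintersectone} is stated in the paper only for geometrically infinite pieces, so you should note that the identical statement holds for horizontal pieces (indeed more easily, since there $\varphi=\mathrm{id}$); and in the vertical case you should pass to a finite cover of the Seifert block where it becomes a product $F'\times S^1$ (as the paper does in Lemma~\ref{lem:verticalgraphmanifold}) before writing $B=\gamma\times S^1$, since the original base may be an orbifold. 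Neither affects the validity of your argument.
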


\begin{proof}
A simplicial map between trees is injective if it is locally injective (see \cite{Stallings83}). Suppose by way of contradiction that $\zeta$ is not locally injective. Then there exists three distinct pieces $\tilde{B}_1$, $\tilde{B}_2$ and $\tilde{B}_3$ in $\tilde{S}$ such that $\tilde{B}_1 \cap \tilde{B}_2$ is a line $\ell_{1}$ and $\tilde{B}_2 \cap \tilde{B}_3$ is a line $\ell_{2}$ and the images $\tilde{g}(\tilde{B}_{1})$ and $\tilde{g}(\tilde{B}_{3})$ lie in the same block $\tilde{M}_{1}$ of $\tilde{N}$. Let $\tilde{M}_{2}$ be the block containing the image $\tilde{g}(\tilde{B}_{2})$. We have $\tilde{g}(\ell_1)$ and $\tilde{g}(\ell_2)$ are subsets of the JSJ plane $\tilde{T} = \tilde{M}_1 \cap \tilde{M}_2$.
Since the map $\tilde{g}$ is an embedding, it follows that $\tilde{g}(\ell_1)$ and $\tilde{g}(\ell_2)$ are disjoint lines in the plane $\tilde{T}$.
If $\tilde{B}_{2}$ is horizontal, this contradicts Lemma~6.3 in \cite{Hruska-Nguyen}.
If $\tilde{B}_{2}$ is geometrically infinite, this contradicts  Remark~\ref{rem:flowlineintersectone}.
If $\tilde{B}_{2}$ is vertical, this contradicts the fact that $B_{2}$ is essential. 
If $\tilde{B}_{2}$ is geometrically finite, this contradicts Lemma~\ref{lem:geometrically finite parallel lines}.
\end{proof}

In the rest of this section, we equip $S$ with a hyperbolic metric $d_S$ such that the boundary (if nonempty) is totally geodesic. 

\begin{proof}[Proof of Theorem~\ref{thm:distortionalmostfiber}]
The non-geometric manifold $N$ has a finite cover such that each Seifert block in this cover is a trivial circle bundle over a hyperbolic surface (see Lemma~3.1 \cite{PW14}). We elevate $S \looparrowright N$ into this finite cover. By Proposition~\ref{prop:distortion2}, it suffices to prove the theorem in this cover. Thus, without loss of generality, we can assume that each Seifert block in $N$ is a trivial circle bundle over a hyperbolic surface.

Now we deal with the issue of metrics:
We always have a convenient metric (in the sense of Section~3.1)  on a mixed manifold . In the graph manifold case, we may pass to a further finite cover and hence assume that it is a simple graph manifold. Then we can choose a convenient metric on it as described in Section~3.1.



We first show that $f \preceq \Delta_{H}^{G}$. Every finitely generated subgroup of a surface group or free group is undistorted. It follows that for any component $S_i$ of $\Phi(S)$ then $\pi_1(S_{i})$ is undistorted in $\pi_1(S)$. 
It follows from Proposition~\ref{prop:distortion} that $\delta_{S_{i}}$ is dominated by the distortion of $\pi_1(S)$ in $\pi_1(N)$. Therefore, $f \preceq \Delta_{H}^{G}$.

We are now going to prove $\Delta_{H}^{G} \preceq \bar{f}$, which is less trivial.
Let $h \in H$ such that $d \bigl (\tilde{s}_{0},h(\tilde{s}_{0}) \bigr ) \le n$, we wish to show that $d_{\tilde{S}} \bigl (\tilde{s}_{0},h(\tilde{s}_{0}) \bigr )$ is bounded above by $\bar{f}(n)$. The theorem is proved by an application of Corollary~\ref{cor:GeometricDistortion}.
For each component $S_{i}$ of $\Phi(S)$, let $\tilde{\delta}_{S_i}$ be the distortion of $\tilde{S}_{i}$ in $\tilde{N}$. We note that $\tilde{\delta}_{S_i} \sim \delta_{S_i}$. Let \[
\tau(n) := \max \bigset{\tilde{\delta}_{S_{i}}(n)}{\text {$S_{i}$ is a component of $\Phi(S)$}}
\] and $\bar{\tau}$ is the superadditive closure of $\tau$. We note that $\bar{\tau} \sim \bar{f}$ by Remark~\ref{rem:superadditive}.

 We will assume that $\tilde{s}_0$ and $h(\tilde{s}_0)$ belong to distinct pieces of $\tilde{S}$, otherwise the fact $d_{\tilde{S}}(\tilde{s}_{0},h(\tilde{s}_{0}))$ is bounded above by $\bar{f}(n)$ is trivial. Without of generality, we assume that $s_{0}$ belongs to a curve in the collection $\mathcal{T}_{g}$. Let $\mathcal{Q}$ be the family of lines in $\tilde{S}$ that are lifts of curves of $\mathcal{T}_g$. We note that there are distinct lines $\ell$ and $\ell'$ in $\mathcal{Q}$ such that $\tilde{s}_0 \in \ell$ and $h(\tilde{s}_0) \in \ell'$. Let $e$ and $e'$ be the non-oriented edges in the tree $\mathbf{T}_{S}$ corresponding to the lines $\ell$ and $\ell'$ respectively. Choose the non backtracking path joining $e$ to $e'$ in the tree $\mathbf{T}_S$, with ordered vertices $v_0,v_1,\dots,v_{k-1}$ where $v_1$ is not a vertex on the edge $e$ and $v_{k-2}$ is not a vertex on the edge $e'$. We denote the pieces corresponding to the vertices $v_{i}$ by $\tilde{B}_{i}$ and the blocks corresponding to the vertices $\zeta(v_i)$ by $\tilde{M}_i$ with $i = 0,1, \cdots ,k-1$. We note that the blocks $\tilde{M}_i$ are distinct because $\zeta$ is injective by Proposition~\ref{prop:TreeBijective}.

For each piece $B$ of $S$, let $M$ be the block of $N$ such that $B$ is mapped into $M$. If $B \looparrowright M$ is vertical, let $R_{B}$ be the constant given by Lemma~\ref{lem:verticalgraphmanifold}. If  $B \looparrowright M$ is geometrically finite, we let $R_{B}$ be the constant given by Lemma~\ref{lem:geometricallyfinitehyper}. Since the number of vertical and geometrically finite pieces of $S$ is finite, we let $R$ be the maximum of the numbers $R_{B}$ chosen above. 

By a similar argument as in the proof of Theorem~6.1 in \cite{Hruska-Nguyen}, we can find a path $\gamma$ connecting $\tilde{s}_0$ to $h(\tilde{s}_0)$ that intersects each plane $\tilde{T}_{i} = \tilde{M}_{i-1} \cap \tilde{M}_{i}$ with $i=1,2,\dots ,k-1$ exactly at one point $y_{i}$ and satisfies $\abs{\gamma} \le Kd \bigl (\tilde{s}_{0},h(\tilde{s}_{0}) \bigr )$ where the constant $K$ depends only on the metric $d$. Here $|\cdot|$ denotes the length of a path with respect to the metric $d$.

If a piece $\tilde{B}_{i}$ is either vertical or geometrically finite in the corresponding block $\tilde{M}_{i}$, we let $\alpha_{i}$ be a path in $\tilde{M}_{i}$ connecting $y_i$ to $y_{i+1}$ and $\beta_{i}$ be a path in $\tilde{B}_{i}$ connecting a point in $\tilde{B}_{i} \cap \tilde{T}_{i}$ to a point in $\tilde{B}_{i} \cap \tilde{T}_{i+1}$ as given by Lemma~\ref{lem:verticalgraphmanifold} (when $\tilde{B}_{i}$ is vertical) and Lemma~\ref{lem:geometricallyfinitehyper} (when $\tilde{B}_{i}$ is geometrically finite). We replace $\beta_i$ by a geodesic in $\tilde{S}$ connecting $\beta_{i}(0)$ to $\beta_{i}(1)$. {\bf{By abuse of notation, we still denote this geodesic by $\beta_i$}}.

\begin{figure}[h]
\centering
\def\svgwidth{\columnwidth}
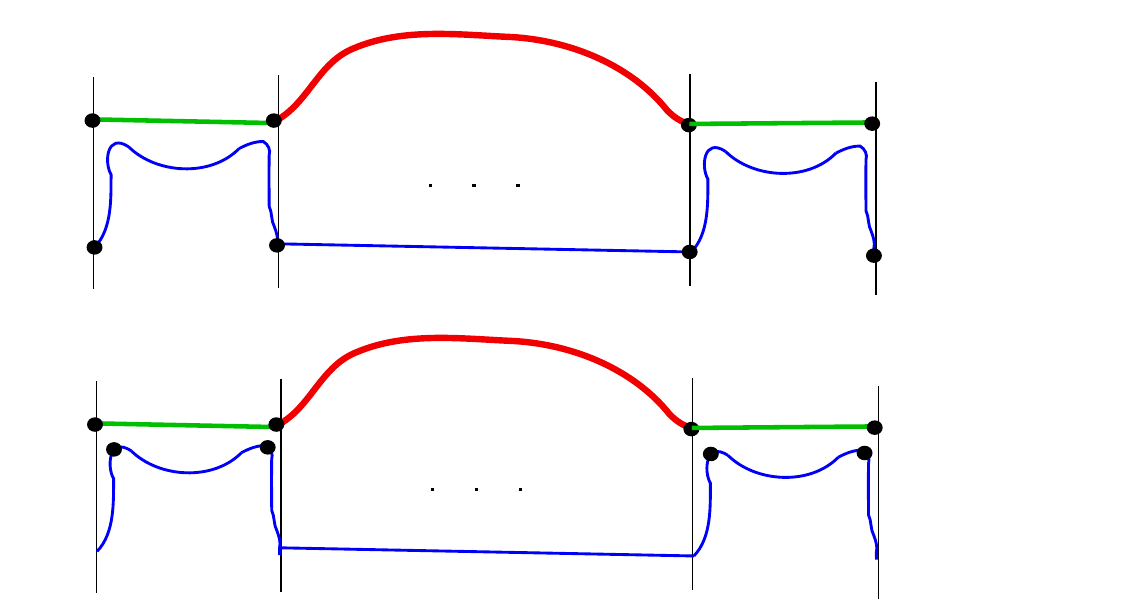
\caption{The upper picture illustrates the two end points of each path $\beta_{i_j}$ are within a $R$--neighborhood of $\alpha_{i_j} \subset \gamma'$. We add a geodesic in the almost fiber part of $\tilde{S}$ connecting $\beta_{i_j}(1)$ to $\beta_{i_{j+1}}(0)$.
The lower picture illustrates the point $v_{i_j}$ (resp.\,$u_{i_j}$) is within $R$--distance from $\beta_{i_j}(0)$ (resp.\,$\beta_{i_j}(1)$). The path $\gamma'_s$ is the subpath of $\gamma'$ that connects $v_{i_j}$ to $u_{i_{j+1}}$.}
\label{figure:1}
\end{figure}

On the path $\gamma$, every time the piece $\tilde{B}_{i}$ is either vertical or geometrically finite, we replace the subpath $\gamma_{|[y_{i},y_{i+1}]}$ of $\gamma$ by $\alpha_{i}$. We therefore obtain a new path denoted by $\gamma'$ such that \[
\abs{\gamma'} \le KRd \bigl (\tilde{s}_{0},h(\tilde{s}_{0}) \bigr ) \le KRn
\]

We now construct a path $\beta$ in $\tilde{S}$ connecting $\tilde{s}_{0}$ to $h(\tilde{s}_{0})$ which stays close to $\gamma'$ every time they both travel the same a block containing a piece which is either vertical or geometrically finite (see Figure~\ref{figure:1}). Let $\tilde{B}_{i_{0}},\dots,\tilde{B}_{i_{t}}$ be the collection of the vertical or geometrically finite pieces where $0 \le i_{0}\le \cdots \le i_{t} \le k-1$. From the given paths $\beta_{i_{0}},\dots,\beta_{i_{t}}$, we obtain a path $\beta$ in $\tilde{S}$ connecting $\tilde{s}_{0}$ to $h(\tilde{s}_{0})$ by adding a geodesic in $\tilde{S}$ connecting the endpoint of $\beta_{i_{j}}$ to the initial point of $\beta_{i_{j+1}}$ where $j$ varies from $0$ to $t-1$, adding a geodesic in $\tilde{S}$ connecting $\tilde{s}_{0}$ to the initial point of $\beta_{i_{0}}$, and a geodesic in $\tilde{S}$ connecting the endpoint of $\beta_{i_{t}}$ to $h(\tilde{s}_{0})$.

{\bf{Claim~1:}} There exists a linear function $J$ not depending on the choices of $n$, $h$, or $\beta$ such that $\sum_{j=0}^{t}\abs{\beta_{i_j}}_{\tilde{S}} \le J(n)$.

Since vertical pieces and geometrically finite pieces of $S$ are undistorted in the corresponding blocks of $N$, and there are finite many of vertical pieces and geometrically finite pieces, we have a constant $\epsilon >0$ such that the following holds. Suppose that B is either vertical piece or geometrically finite piece, then for any $x, y \in \tilde{B}$ we have $d_{\tilde{S}}(x,y) \le \epsilon d(x,y) + \epsilon$. 

We recall that $\alpha_{i_j}$ is a subpath of $\gamma'$ and $\beta_{i_j}$ is a geodesic in $\tilde{S}$. Since $\beta_{i_j}(0), \beta_{i_j}(1) \in \mathcal{N}_{R}(\alpha_{i_j})$, we have $d(\beta_{i_j}(0), \beta_{i_j}(1)) \le 2R + \abs{\alpha_{i_j}}$.
Let $\rho$ be the constant given by Remark~\ref{rem:constant rho}. We note that $k \le n/\rho$.

We have
\begin{align*}
   \sum_{j=0}^{t}\abs{\beta_{i_{j}}}_{\tilde{S}} &= \sum_{j=0}^{t} d_{\tilde{S}}(\beta_{i_j}(0),\beta_{i_j}(1)) \le \sum_{j=0}^{t} \Bigl ( \epsilon\, d(\beta_{i_j}(0),\beta_{i_j}(1)) + \epsilon \Bigr )\\
   &\le \sum_{j=0}^{t} \Bigl ( \epsilon (2R + \abs{\alpha_{i_j}}) +\epsilon \Bigr ) = \sum_{j=0}^{t}\epsilon \abs{\alpha_{i_j}} + \sum_{j=0}^{t}(2R\epsilon + \epsilon)\\
   &\le \epsilon \sum_{j=0}^{t} \abs{\alpha_{i_j}} +(t+1)(2R\epsilon + \epsilon) \le \epsilon \abs{\gamma'} + (t+1)(2R\epsilon + \epsilon)\\
   &\le \epsilon \abs{\gamma'} + (k+1)(2R\epsilon + \epsilon) \le \epsilon \abs{\gamma'} + (n/\rho+1)(2R\epsilon + \epsilon)\\
   & \le \epsilon K R n + (n/\rho+1)(2R\epsilon + \epsilon)
\end{align*}
Let $J(n) = \epsilon K R n + (n/\rho+1)(2R\epsilon + \epsilon)$, the claim is confirmed.

We consider the complement of $\beta - \cup_{j=0}^{t}\beta_{i_{j}}$, which can be written as a disjoint union of subpaths $\sigma_1,\dots,\sigma_{m}$ of $\beta$ with $m \le k$.

{\bf{Claim~2:}}
For each $i=1,\dots,m$, there exists a subpath $\gamma'_{i}$ of $\gamma'$ such that 
\[
d \bigl (\sigma_{i}(0),\sigma_{i}(1) \bigr ) \le 2R +\abs{\gamma'_{i}}
\]
and $\sum_{i=1}^{m}\abs{\gamma'_{i}} \le 3\abs{\gamma'}$.

Indeed, let $p(i) \neq q(i)$ be two numbers in the collection $\{i_{0},\dots, i_{t}\}$ such that $\sigma_i(0)$ is the endpoint of $\beta_{p(i)}$ and $\sigma_i(1)$ is the initial point of $\beta_{q(i)}$. For convenience, lets relabel $p =p(i)$, $q = q(i)$. Note that it is possible that the pieces $\tilde{B}_p$ and $\tilde{B}_q$ are adjacent.
Since $\beta_{p} \subset \mathcal{N}_{R}(\alpha_{p})$ and $\beta_{q} \subset \mathcal{N}_{R}(\alpha_{q})$, it follows that $d(\beta_{p}(1), v_{p}) \le R$ and $d(\beta_{q}(0), u_{q}) \le R$ for some $v_{p} \in \alpha_{p}$, $u_{q} \in \alpha_{q}$.

Let $\gamma'_{i}$ be the concatenation ${\alpha_{p}}_{|[v_{p},\alpha_{p}(1)]} \cdot 
\gamma'_{|[\alpha_{p}(1),\alpha_{q}(0)]} \cdot {\alpha_{q}}_{[\alpha_{q}(0),u_{q}]}$ (see Figure~\ref{figure:1}). It follows that $d(v_{p},u_{q}) \le \abs{\gamma'_i}$. Using $d(\beta_{p}(1),v_p) \le R$, $d(\beta_{q}(0), u_{q}) \le R$, $d(v_p, u_q) \le \abs{\gamma'_i}$, and the triangle inequality, we have
\begin{align*}
    d \bigl (\sigma_i(0),\sigma_i(1) \bigr ) &= d\bigl (\beta_{p}(1), \beta_{q}(0) \bigr )
    \le d\bigl (\beta_{p}(1), v_{p} \bigr ) + d\bigl (v_{p}, u_{q} \bigr ) + d\bigl (u_{q}, \beta_{q}(0)\bigr )\\
    &\le R + d(v_{p},u_{q}) +R
    =2R +d(v_{p},u_{q}) \le 2R + \abs{\gamma'_{i}}\\
\end{align*}
 Recall that $\alpha_{i_j}$ is a subpath of $\gamma'$ with $i_j \in \{i_0, \dots,i_t\}$ and $\sum_{j=0}^{t}\abs{\alpha_{i_j}} \le \abs{\gamma'}$. By the construction of $\gamma'_{i}$, we have \[
 \abs{\gamma'_{i}} \le \abs{\alpha_{p}} + \abs{\alpha_{q}} + \Bigl |\gamma'_{|[\alpha_{p}(1),\alpha_{q}(0)]} \Bigr |
 = \abs{\alpha_{p(i)}} + \abs{\alpha_{q(i)}} + \Bigl |\gamma'_{|[\alpha_{p(i)}(1),\alpha_{q(i)}(0)]} \Bigr|
\]
 Summing over $i$, we obtain
\begin{align*}
    \sum_{i=1}^{m}\abs{\gamma'_{i}} &\le \sum_{i=1}^{m} \Bigl (\abs{\alpha_{p(i)}} + \abs{\alpha_{q(i)}} \Bigr ) + \sum_{i=1}^{m} \Bigl | \gamma'_{|[\alpha_{p(i)}(1),\alpha_{q(i)}(0)]} \Bigr |\\
    &\le 2\sum_{j=0}^{t}\abs{\alpha_{i_j}} + \sum_{i=1}^{m} \Bigl |\gamma'_{|[\alpha_{p(i)}(1),\alpha_{q(i)}(0)]} \Bigr |
    \le 2\sum_{j=0}^{t}\abs{\alpha_{i_j}} + \abs{\gamma'} \le 2\abs{\gamma'} + \abs{\gamma'} =3\abs{\gamma'}.
\end{align*}
The claim is confirmed.

Let $S_{i}$ be the component of $\Phi(S)$ that the image of $\sigma_{i}$ under the covering map belongs to. We have the length of $\sigma_{i}$ in $\tilde{S}$ is no more than $\tilde{\delta}_{S_{i}} \bigl (2R+\abs{\gamma'_{i}} \bigr )$. Thus the sum of the lengths of $\sigma_{i}$ in $\tilde{S}$ is no more than \[
\tilde{\delta}_{S_1}\bigl (2R+\abs{\gamma'_{1}} \bigr ) + \cdots + \tilde{\delta}_{S_m}\bigl (2R + \abs{\gamma'_{m}} \bigr )
\]
which is less than or equal to $\tau \bigl (2Rm+\sum_{i=1}^{m}\abs{\gamma'_{i}} \bigr )$.
Since $\sum_{i=1}^{m}\abs{\gamma'_{i}} \le 3\abs{\gamma'} \le 3KRn$ and $m$ is bounded above by a linear function in term of n ($m \le k \le n/\rho$), it follows that $\abs{\beta}_{\tilde{S}} \preceq \bar{\tau}(n)$.
\end{proof}

\section{Distortion of clean almost fiber surfaces in mixed manifolds}
\label{sec:distortion mixed}
As we have shown in Section~\ref{sec:almostfiber}, distortion of a surface in a non-geometric $3$--manifold is determined by the distortion of components of the almost fiber part of the surface. We note that each component of the almost fiber part is a clean almost fiber surface. In this section, we compute the distortion of a clean almost fiber surface $S$ in $N$. The main theorem is the following.

\begin{thm}
\label{thm:UpperBound}
Let $g \colon (S,s_0)\looparrowright (N,x_0)$ be a clean almost fiber surface in a mixed manifold $N$. We assume that all Seifert fibered blocks of $N$ are non-elementary. Suppose that $S$ contains at least one geometrically infinite piece. Then the distortion of $\pi_1(S)$ in $\pi_1(N)$ is exponential if $\pi_1(S)$ is separable in $\pi_1(N)$, and double exponential if $\pi_1(S)$ is non-separable in $\pi_1(N)$.
\end{thm}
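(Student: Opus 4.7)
The plan is to prove the upper and lower bounds of each case separately, working geometrically via Corollary~\ref{cor:GeometricDistortion}. The three main tools are: the decomposition of $\tilde S$ into pieces $\tilde B_0,\dots,\tilde B_{k-1}$ indexed along a geodesic in the dual tree $\mathbf T_S$ (with the tree map $\zeta$ injective by Proposition~\ref{prop:TreeBijective}); the pseudo-Anosov growth estimate of Lemma~\ref{lem:pseudo}; and the spirality machinery of Definition~\ref{defn:DilationSlopes}, which controls the failure of degeneracy slopes to match across a JSJ torus, together with its separability interpretation in Remark~\ref{rem:SUNLIU}.

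For the upper bound, I would fix $h \in \pi_1(S)$ with $d_{\tilde N}(\tilde s_0,h\tilde s_0) \le n$ and build a connecting path $\beta$ in $\tilde S$. Take an ambient geodesic $\gamma$; by Remark~\ref{rem:constant rho} it crosses at most $k\le n/\rho+1$ JSJ planes, determining the list of pieces $\tilde B_i$. In each geometrically infinite piece the lift to the universal cover $\tilde M_{B_i}$ identifies $\tilde B_i$ with a slice, and Lemma~\ref{lem:height} bounds the number of slices traversed between entry and exit boundary circles linearly in the length of $\gamma$ inside the hyperbolic block. The key observation is that when $\gamma$ crosses from $\tilde B_i$ to $\tilde B_{i+1}$ through a common JSJ plane $\tilde T_{e_i}$, the degeneracy slope on the $\tilde B_i$ side and the one on the $\tilde B_{i+1}$ side differ by the ratio $\xi_{e_i}=h_{e_i}/h_{-e_i}$; consequently the slice-count in $\tilde B_{i+1}$ needed to reach the next exit boundary scales by $\xi_{e_i}$ relative to the slice-count accumulated in $\tilde B_i$, up to a universal additive constant. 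Iterating along the sequence of edges, the height in any single piece is bounded by $\max_j \bigabs{\prod_{\ell\le j}\xi_{e_\ell}}\cdot O(n)$. In the separable case, Proposition~\ref{prop:virtualupper} supplies a uniform bound $\Lambda$ on these products, so the per-piece height is $O(n)$ and Lemma~\ref{lem:pseudo} gives $\abs{\beta}_{\tilde S} = O(n\lambda^n)$, which is exponential. In the non-separable case the product can reach the governor raised to the $k$-th power, namely $\Xi^{O(n)}$; the per-piece height becomes exponential in $n$, and the pseudo-Anosov growth yields $\abs{\beta}_{\tilde S} \preceq \lambda^{\Xi^{O(n)}}$, which is double exponential. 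Horizontal pieces and geometrically finite pieces (absent here, since $\Phi(S)=S$) contribute only linearly and are absorbed into the estimate.

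For the lower bounds, in the separable case $S$ contains a geometrically infinite piece $B$ mapped into a hyperbolic block $M$. Since $\pi_1(B)$ is a finitely generated subgroup of the surface group $\pi_1(S)$ it is undistorted there, and $\pi_1(M)$ is undistorted in $\pi_1(N)$ for a mixed manifold; Proposition~\ref{prop:distortion} then yields $\Delta_{\pi_1(B)}^{\pi_1(M)}\preceq \Delta_{\pi_1(S)}^{\pi_1(N)}$, and the left side is exponential by Remark~\ref{rem:distortionwellknow}. In the non-separable case, Remark~\ref{rem:SUNLIU} together with the Liu--Sun theorems produces a directed cycle $\gamma$ in $\Gamma(\Phi(\mathcal T_g))$ with $W:=w(\gamma)$ satisfying $\abs W>1$ (replacing $\gamma$ by $\gamma^{-1}$ if necessary). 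I would then construct a sequence $h_k\in\pi_1(S)$ by concatenating representatives of the $k$-th iterate of $\gamma$ with fiber-direction correction elements whose ambient length is uniformly bounded; the ambient displacement $d_{\tilde N}(\tilde s_0,h_k\tilde s_0)$ grows linearly in $k$, whereas the cyclic accumulation of slope ratios forces the height encountered in at least one piece to be of order $W^k$, and Lemma~\ref{lem:pseudo}(\ref{item:superexponentialgrowth}) translates this into a surface displacement of order $\lambda^{W^k}$, which is double exponential in $k$. The main obstacle will be the bookkeeping in the upper bound: making rigorous how the slice-count transforms across $\tilde T_{e_i}$ into the slice-count on the opposite side, and in particular showing that the transition is governed by $\xi_{e_i}$ up to a universal additive error coming from the chosen degeneracy slopes $\mathbf{s}_{cB_i}$; the lower-bound construction in the non-separable case is dual to this calculation and will require a careful choice of loop in $S$ realizing the cycle $\gamma$ while keeping the ambient length controlled.
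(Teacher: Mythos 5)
Your proposal follows essentially the same route as the paper: the upper bound is obtained by tracking slice-heights $t_j$ that transform by the ratio $\xi_{e_j}$ at each JSJ-plane crossing (the paper's Lemmas~\ref{lem:Inequality1} and \ref{lem: application rule of sines}), then bounded either by the uniform constant $\Lambda$ from Proposition~\ref{prop:virtualupper} in the separable case (yielding exponential after Lemma~\ref{lem:pseudo}) or by powers of the governor in the non-separable case (yielding double exponential); the lower bound is given by a geometrically infinite piece in the separable case, and in the non-separable case by the paper's Construction~\ref{cons:sequence x}, which realizes a spiraling cycle $\gamma$ with $w(\gamma)>1$ reaching a hyperbolic block (so heights $t_{nm+1}\gtrsim w(\gamma)^n$) and invokes Lemma~\ref{lem:pseudo}. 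The ``bookkeeping'' you flag is exactly where the paper puts its work---the degeneracy-slope formalism of Lemmas~\ref{lem:Inequality1}--\ref{lem: application rule of sines}, and in the lower bound a path-doubling along the circle $c_1$ (Lemmas~\ref{lem:shortestlowerbound}--\ref{lem:doubleexponentiallower}) to convert the path-length estimate into a genuine distance lower bound---but your outline identifies the correct mechanism and hypotheses for each case.
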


We recall that $\pi_1(S)$ is separable in $\pi_1(N)$ if and only if the spirality of $S$ is trivial (see Remark~\ref{rem:SUNLIU}). The proof of Theorem~\ref{thm:UpperBound} is divided into two parts. The proof of the lower bound of the distortion is given in Subsection~\ref{sub:1} and the proof of the upper bound of the distortion is given by Subsection~\ref{sub:2}.

\begin{setup}
\label{setup:1}
We equip $N$ with the metric $d$ as in Subsection~\ref{rem:metriconmixedmanifold}, and equip $S$ with a hyperbolic metric $d_S$ such that the boundary (if nonempty) is totally geodesic and the simple closed curves of $\mathcal{T}_{g}$ are geodesics. 

For each piece $B$ of $S$, let $M$ be the block of $N$ in which $B$ is mapped into $M$. By Remark~\ref{rem:virtual fiber}, there exists a finite cover $M_{B} \to M$ where $M_{B}$ is the mapping torus of a homeomorphism $\varphi$ of the surface $B$ such that $\varphi$ fixes periodic points on $\partial B$. 
Each boundary component $c$ of $B$ is mapped into a boundary torus of $M_{B}$, we fix a degeneracy slope on this torus, and denoted it by $\mathbf{s}_{cB}$. The pullback of the fibration $M_{B} \to S^1$ by the infinite cyclic covering map $\R \to S^1$ is $B \times \R$ (see the paragraph above Lemma~\ref{lem:height}), we identify the universal cover $\tilde{M}$ with $\tilde{B} \times \R$. We also assume that $\tilde{S} \cap \tilde{M} = \tilde{B} \times \{0\}$.
\end{setup}

\subsection{Upper bound of the distortion}
\label{sub:2}
In this subsection, we find the upper bound of the distortion of $\pi_1(S)$ in $\pi_1(N)$.
\begin{prop}
\label{prop:mixedupperbound}
The distortion of $\pi_1(S)$ in $\pi_1(N)$ is at most double exponential. Furthermore, if the spirality of $S$ is trivial then the distortion is at most exponential.
\end{prop}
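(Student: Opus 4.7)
The plan is to produce a near-geodesic path $\gamma$ in $\tilde N$ from $\tilde{s}_0$ to $h(\tilde{s}_0)$ of length $O(n)$ that crosses JSJ planes transversely (using the short-path construction from the proof of Theorem~\ref{thm:distortionalmostfiber}) and to track, block by block, the fiber coordinates of the endpoints of $\gamma$ in the identifications $\tilde{M}_i = \tilde{B}_i \times \R$ from the setup. By Remark~\ref{rem:constant rho} the number of traversed blocks satisfies $k \le n/\rho$; label them $\tilde{M}_0, \ldots, \tilde{M}_{k-1}$ with corresponding pieces $\tilde{B}_0, \ldots, \tilde{B}_{k-1}$ of $\tilde S$ sitting as the slices at height $0$. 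The two quantities to control along $\gamma$ are the heights $m_i^{\mathrm{in}}, m_i^{\mathrm{out}} \in \R$ at which $\gamma$ enters and exits $\tilde{M}_i$.

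First, Lemma~\ref{lem:height} bounds $\abs{m_i^{\mathrm{out}}-m_i^{\mathrm{in}}}$ by the length of $\gamma$ inside $\tilde{M}_i$. When $\gamma$ crosses the JSJ plane $\tilde{T}_i=\tilde{M}_{i-1}\cap \tilde{M}_i$, the heights on either side describe the same point of $\tilde{T}_i$ in two different bundle coordinate systems whose degeneracy slopes are related by the rational number $\xi_{e_i}$, yielding a recurrence of the form
$\abs{m_i^{\mathrm{in}}}\le \abs{\xi_{e_i}}\cdot \abs{m_{i-1}^{\mathrm{out}}} + O(1)$. Iterating this recurrence from $m_0^{\mathrm{in}}=0$ gives a bound on $\abs{m_i^{\mathrm{in}}}$, $\abs{m_i^{\mathrm{out}}}$ involving the partial products $\prod_{\ell=j}^{i}\abs{\xi_{e_\ell}}$, which in general grows exponentially in $k$, hence exponentially in $n$.

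Next, to bound $d_{\tilde{S}}\bigl(\tilde{s}_0,h(\tilde{s}_0)\bigr)$, observe that any path in $\tilde S$ from $\tilde{s}_0$ to $h(\tilde{s}_0)$ must enter and leave each $\tilde B_i$ near the degeneracy slopes lying at heights $m_i^{\mathrm{in}}$ and $m_i^{\mathrm{out}}$. For a horizontal piece in a Seifert block the map $\varphi_i$ is the identity, so the in-fiber displacement is comparable to $\abs{m_i^{\mathrm{out}}-m_i^{\mathrm{in}}}$. For a geometrically infinite piece in a hyperbolic block, $\varphi_i$ is pseudo-Anosov and Lemma~\ref{lem:pseudo}\eqref{item:superexponentialgrowth} shows that the degeneracy slope at height $m$ is reached from the basepoint with in-fiber distance at most $K\lambda^{\abs{m}}$ for some $\lambda>1$. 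Summing these in-fiber contributions across all $k$ pieces, together with the $O(k)$ bounded bridging segments along the JSJ intersection lines, bounds the length in $\tilde S$ by a double-exponential function of $n$, which by Corollary~\ref{cor:GeometricDistortion} yields the double-exponential distortion bound.

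When the spirality of $S$ is trivial, Proposition~\ref{prop:virtualupper} provides a uniform constant $\Lambda$ bounding every partial product $\prod_{\ell=j}^{i}\abs{\xi_{e_\ell}}$; solving the recurrence under this constraint gives $\abs{m_i^{\mathrm{in}}}, \abs{m_i^{\mathrm{out}}} = O(n)$ rather than $O(e^n)$, so the pseudo-Anosov blow-up contributes only a single exponential in $n$, yielding the claimed exponential upper bound. The main obstacle is the correct bookkeeping of the height recurrence across a JSJ plane in the mixed setting: in the graph manifold case of Hruska--Nguyen the fiber direction coincides with the degeneracy slope and the analogous estimate is direct, whereas on a hyperbolic side the pseudo-Anosov dynamics are intertwined with the degeneracy slope, so isolating the dilation factor $\xi_{e_i}$ and separating its contribution to heights from its amplified effect on in-fiber distances requires careful use of the generalized spirality of Liu and Sun.
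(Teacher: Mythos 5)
Your proposal is correct and takes essentially the same approach as the paper: a near-geodesic path crossing a linear number of blocks, a multiplicative height-recurrence across JSJ planes governed by the spirality factors $\xi_e$ (the paper's Lemmas~\ref{lem:Inequality1} and~\ref{lem: application rule of sines}, which package what you call the block-crossing estimate via Lemma~\ref{lem:height} and the $\xi$-scaling), an exponential-in-$n$ height bound replaced by an $O(n)$ bound via Proposition~\ref{prop:virtualupper} when spirality is trivial, and conversion of heights to in-surface distances by the exponential distortion of the fiber inside a hyperbolic block. The only minor difference is presentational: the paper applies a uniform exponential in-piece estimate (Remark~\ref{rem:constants LandC}) rather than distinguishing linear Seifert from exponential pseudo-Anosov pieces, but this makes no difference to the final bounds.
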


We use the same strategy as in the upper bound section of \cite{Hruska-Nguyen} (see Section~6 of \cite{Hruska-Nguyen}) but techniques are different. We briefly discuss here the main difference between this current section and Section~6 in \cite{Hruska-Nguyen}. In the setting of graph manifold, a JSJ torus $T$ of $N$ receives two Seifert fibers from the blocks on both sides. In \cite{Hruska-Nguyen}, at any $y$ in $\tilde{T}$ (universal cover of $T$), we follow fibers (on both sides) until they meet $\tilde{S}$. Note that these fibers do not match up. In this current section, it is possible that one block containing $T$ is a Seifert fibered space and the other block containing $T$ is a hyperbolic block or both the blocks are hyperbolic, thus we will follow degeneracy slopes instead.
 Moreover, at $y \in \tilde{T}$, we need to be specific on which degeneracy slopes we should follow.

We describe here the outline of the proof of Proposition~\ref{prop:mixedupperbound}. For each $n \in \N$, let $h \in \pi_1(S,x_0)$ such that $d \bigl (\tilde{x}_{0},h(\tilde{x}_{0}) \bigr) \le n$. We would like to find an upper bound (either exponential or double exponential) of $d_{\tilde{S}} \bigl (\tilde{x}_{0},h(\tilde{x}_{0}) \bigr )$ in terms of $n$. Choose a path $\beta$ in $\tilde{N}$ connecting $\tilde{x}_{0}$ to $h(\tilde{x}_{0})$ with $\abs{\beta} \le n$ such that $\beta$ passes through a sequence of blocks $\tilde{M}_{0},\dots,\tilde{M}_{k}$, intersecting the plane $\tilde{T}_{j} = \tilde{M}_{j-1} \cap \tilde{M}_{j}$ exactly at one point that is denoted by $y_{j}$ with $j=1,\cdots,k$ if $k\ge 1$. There exists a piece $\tilde{B}_{j}$ of $\tilde{S}$ such that $\tilde{g}(\tilde{B}_{j}) \subset \tilde{M}_{j}$. Let $\rho$ be the constant given by Remark~\ref{rem:constant rho}. We note that $k \le n/\rho$.
Let $c_{j}$ be the circle in $\mathcal{T}_{g}$ that is universally covered by the line $\tilde{B}_{j-1} \cap \tilde{B}_{j}$ with $j=1,\cdots,k$. Let $\overleftarrow{\mathbf s_{j}} = \mathbf{s}_{c_{j}B_{j-1}}$ and $\overrightarrow{\mathbf s_{j}} = \mathbf{s}_{c_{j}B_{j}}$ be the degeneracy slopes in the corresponding tori $\overleftarrow{T'_{j}}$ and $\overrightarrow{T'_{j}}$ of the spaces $M_{B_{j-1}}$ and $M_{B_{j}}$ respectively (see Definition~\ref{defn:degeneracyslope}). The distortion function $\Delta$ of $\tilde{S}$ in $\tilde{N}$ does not change (up to equivalence in Definition~\ref{def:equivalentfunction}) when we add a linear function in term of $n$ to $\Delta$. Therefore, to make the argument simpler, using Corollary~\ref{cor:GeometricDistortion} and modifying $g$ by a homotopy, we may assume that the lifts of the degeneracy slopes and lines $\tilde{g}(\ell)$ (where $\ell$ is a line in $\mathcal{Q}$ the family of lines that are lifts of loops of $\mathcal{T}_{g}$) are straight lines in the corresponding planes of $\tilde{N}$. 
The line parallel to a lift of the degeneracy slope $\overleftarrow{\mathbf s_{j}}$ in $\tilde{T}_{j}$ passing through $y_{j}$ intersects $\tilde{g}(\tilde{S})$ in a unique point which is denoted by $x_{j}$. Similarly, the line parallel to a lift of the degeneracy slope $\overrightarrow{\mathbf s_{j}}$ in $\tilde{T}_{j}$ passing through $y_{j}$ intersect $\tilde{g}(\tilde{S})$ in one point which is denoted by $z_{j}$.

Similarly as in \cite{Hruska-Nguyen}, we show that $d_{\tilde{S}} \bigl (\tilde{x}_{0},h(\tilde{x}_{0}) \bigr )$ is dominated by the sum 
\[
e^{n}\sum_{j=1}^{k}e^{d(y_{j},x_{j}) + d(y_{j},z_{j})}
\] and we analyze the growth of the sequence 
\begin{multline*}
   d(y_1,x_1), d(y_1,z_1), d(y_2,x_2), \dots,\\
   d(y_{j-1},z_{j-1}),
   d(y_{j},x_{j}),
   d(y_j,z_j), \dots,\\
   d(y_{k},x_{k}),d(y_{k},z_{k})
\end{multline*}
An upper bound on $d(y_{j},x_{j})$ in terms of $d(y_{j-1},z_{j-1})$ will be described in Lemma~\ref{lem:Inequality1}. A relation between $d(y_{j},x_{j})$ and $d(y_{j},z_{j})$ will be Lemma~\ref{lem: application rule of sines}.

\begin{rem}
\label{rem:constants LandC}
\begin{enumerate}
\item It is possible from the construction above that $x_{j} = z_{j}$.
 \item Using Remark~\ref{rem:distortionwellknow}, the fact $S$ has only finite many pieces, and $N$ has finite many blocks, we obtain a constant $L \ge 1$ such that for each piece $\tilde{B}$ in $\tilde{S}$, we have $d_{\tilde{S}}(u,v) \le e^{Ld(u,v) + L}$ for any two points $u$ and $v$ in $\tilde{B}$.
\end{enumerate}

\end{rem}
Let $\overleftarrow{\lambda_{j}}$ and $\overrightarrow{\lambda_{j}}$  be the lengths of path lifts of the degeneracy slopes $\overleftarrow{\mathbf s_j}$ and $\overrightarrow{\mathbf s_j}$ in $\tilde{N}$ with respect to $d$--metric. Let $\overleftarrow{\ell_j}$ (resp.\, $\overrightarrow{\ell_j}$) be the line parallel to a lift of the degeneracy slope $\overleftarrow{\mathbf s_j}$ (resp.\, $\overrightarrow{\mathbf s_j}$) in $\tilde{T}_j$ such that $y_j \in \overleftarrow{\ell_j}$ (resp.\, $y_j \in \overrightarrow{\ell_j}$). We note that $\overleftarrow{\ell_j}$ intersects $\tilde{S}$ at $x_j$, and $\overrightarrow{\ell_j}$ intersects $\tilde{S}$ at $z_j$.

\begin{lem}[Crossing a block]
\label{lem:Inequality1}
There exists a positive constant $L'$ such that the following holds: For any $j =1,\cdots,k$
\[d(y_{j},x_{j}) \le \frac{\overleftarrow{\lambda_{j}}}{\overrightarrow{\lambda_{j-1}}} d(y_{j-1},z_{j-1}) + L'\,d(y_{j},y_{j-1})
\]
\end{lem}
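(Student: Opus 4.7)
My plan is to leverage the product structure on the block $\tilde{M}_{j-1}$ coming from the mapping torus $M_{B_{j-1}}$ of Remark~\ref{rem:virtual fiber}. Identifying $\tilde{M}_{j-1}$ with $\tilde{B}_{j-1}\times\R$ so that $\tilde{g}(\tilde{S})\cap\tilde{M}_{j-1}=\tilde{B}_{j-1}\times\{0\}$, let $\tau\colon \tilde{M}_{j-1}\to\R$ denote the projection onto the second factor. The key observation is that both $\overrightarrow{\mathbf{s}_{j-1}}$ on $\tilde{T}_{j-1}$ and $\overleftarrow{\mathbf{s}_j}$ on $\tilde{T}_j$ are closed leaves of the suspension flow on $M_{B_{j-1}}$ (Definition~\ref{defn:degeneracyslope}); hence their lifts in the two JSJ planes are straight lines in the $\R$-direction of this product.

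Because we have modified $g$ so that lifts of degeneracy slopes are straight lines, the line through $y_{j-1}$ parallel to a lift of $\overrightarrow{\mathbf{s}_{j-1}}$ is a vertical segment in $\tilde{B}_{j-1}\times\R$ whose other endpoint $z_{j-1}$ satisfies $\tau(z_{j-1})=0$; the same holds for $x_j$. Identifying one unit of $\tau$-translation with one period of the corresponding closed slope (which, after passing to a suitable power of $\phi$ so that $\phi$ has actual fixed points on each boundary circle, has arclength $\overrightarrow{\lambda_{j-1}}$ on $\tilde{T}_{j-1}$ and $\overleftarrow{\lambda_j}$ on $\tilde{T}_j$), I would read off
\[
d(y_{j-1},z_{j-1}) = |\tau(y_{j-1})|\,\overrightarrow{\lambda_{j-1}}, \qquad d(y_j,x_j) = |\tau(y_j)|\,\overleftarrow{\lambda_j}.
\]
Applying the triangle inequality $|\tau(y_j)|\le|\tau(y_{j-1})|+|\tau(y_j)-\tau(y_{j-1})|$ then reduces the lemma to bounding $|\tau(y_j)-\tau(y_{j-1})|$ linearly in $d(y_j,y_{j-1})$.

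For that bound I would apply Lemma~\ref{lem:height} inside the block $\tilde{M}_{j-1}$, yielding constants $A,C>0$ (depending only on the block) with $|\tau(y_j)-\tau(y_{j-1})|\le A\,d(y_j,y_{j-1})+C$; rounding arbitrary real $\tau$-values to the integer slices of Lemma~\ref{lem:height} costs only a bounded additive constant. Since $y_{j-1}$ and $y_j$ lie on distinct JSJ planes, Remark~\ref{rem:constant rho} gives $d(y_j,y_{j-1})\ge\rho$, which absorbs the additive $C$ into a purely linear term. Taking $L'$ to be the maximum of $\overleftarrow{\lambda_j}(A+C/\rho)$ over the finitely many $\pi_1(N)$-orbits of blocks of $\tilde{N}$ and the finitely many pieces of $S$ then furnishes a uniform constant valid for all $j$. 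The main obstacle is the geometric identification in the second paragraph, namely verifying that arclength in the vertical $\R$-direction exactly matches one full period of the relevant closed degeneracy slope; this relies on choosing the mapping torus representative so that each closed leaf wraps exactly once around the suspension circle, and any finite-index discrepancy is absorbed into $L'$.
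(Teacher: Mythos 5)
Your proposal follows essentially the same route as the paper's proof: identify $\tilde{M}_{j-1}$ with $\tilde{B}_{j-1}\times\R$, express $d(y_{j-1},z_{j-1})$ and $d(y_j,x_j)$ as slice-height differences times $\overrightarrow{\lambda_{j-1}}$ and $\overleftarrow{\lambda_j}$ by following the two degeneracy-slope lines (both closed leaves of the suspension flow on $M_{B_{j-1}}$), bound the height difference via Lemma~\ref{lem:height}, and absorb the additive constants using $d(y_{j-1},y_j)\ge\rho$ from Remark~\ref{rem:constant rho}. The only cosmetic difference is that the paper first snaps $y_{j-1}$ and $y_j$ to nearby integer-slice points $u_{j-1},v_j$ on the two degeneracy-slope lines before invoking Lemma~\ref{lem:height}, whereas you work with a real-valued height function $\tau$ and round afterward, so the exact expression for $L'$ collects a few more bounded terms than your formula $\overleftarrow{\lambda_j}(A+C/\rho)$ displays; this is harmless and does not affect the conclusion.
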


\begin{proof}
We recall that the finite covering space $M_{B_{j-1}}$ of $M_{j-1}$ is fibered over circle with the fiber $B_{j-1}$, and the block $\tilde{M}_{j-1}$ is identified with $ \tilde{B}_{j-1} \times \R$.

We recall that the line $\overrightarrow{\ell_{j-1}}$ parallel to a lift of the degeneracy slope $\overrightarrow{\mathbf s_{j-1}}$ and $\overrightarrow{\ell_{j-1}}$ passes through $y_{j-1}$ and $z_{j-1}$.
On the line $\overrightarrow{\ell_{j-1}}$, choose a point $u_{j-1}$ such that $u_{j-1} \in \tilde{B}_{j-1} \times \{n\}$ for some integer $n$, $d(u_{j-1},y_{j-1}) \le \overrightarrow{\lambda_{j-1}}$ and
 \begin{equation}
 \label{eq:1}
d(u_{j-1},z_{j-1}) \le  d(y_{j-1},z_{j-1})
\end{equation}

Similarly, on the line $\overleftarrow{\ell_{j}}$, choose a point $v_{j} \in \tilde{B}_{j-1} \times \{m\}$ for some integer $m$ and $v_{j}$ such that $d(v_{j},y_{j}) \le \overleftarrow{\lambda_{j}}$. It follows that
 \begin{equation}
 \label{eq:2}
 \begin{split}
     d(u_{j-1},v_{j}) &\le d(u_{j-1},y_{j-1}) +d(y_{j-1},y_{j}) + d(y_{j},v_{j})\\
     &\le \overrightarrow{\lambda_{j-1}} + d(y_{j-1},y_{j}) + \overleftarrow{\lambda_{j}}
 \end{split}
 \end{equation}
 
Let $\rho >0$ be the constant given by Remark~\ref{rem:constant rho}. Let $L$ and $C$ be contants given by Lemma~\ref{lem:height}. We use Lemma~\ref{lem:height} and the fact $\rho \le d(u_{j-1},v_{j})$ to see that
\begin{equation}
\label{eq:oplus}
    \begin{split}
    \abs{m-n} &\le L\,d(u_{j-1},v_{j}) + C \\
    &\le Ld(u_{j-1},v_{j}) + \frac{C}{\rho}d(u_{j-1},v_{j}) \\
    &= (L+ \frac{C}{\rho})d(u_{j-1},v_{j})
\end{split}
\end{equation}
Let $L_{j} = \overleftarrow{\lambda_{j}}\, (L + C/\rho) + \overleftarrow{\lambda_{j}}/\rho + \bigl ((\overleftarrow{\lambda_{j}})^{2}
+ \overrightarrow{\lambda_{j-1}}\,\overleftarrow{\lambda_{j}}\bigr )(L + C/\rho)(1/\rho)$.

We use \eqref{eq:1}, \eqref{eq:2}, \eqref{eq:oplus} and the facts $d(y_j,v_j) \le \overleftarrow{\lambda_{j}}$, $d(v_j,x_j) = \abs{m}\overleftarrow{\lambda_{j}}$ and $1 \le d(y_{j-1},y_j) /\rho$  to see that
\begin{align*}
    d(y_j,x_j) &\le d(y_j,v_j) + d(v_j,x_j)
    \le \overleftarrow{\lambda_{j}} + d(v_j,x_j)
    \le \overleftarrow{\lambda_{j}} + \abs{m}\overleftarrow{\lambda_{j}} \\
    &\le \overleftarrow{\lambda_{j}} + \abs{n}\overleftarrow{\lambda_{j}} + \abs{m-n}\overleftarrow{\lambda_{j}}\\
    &= \overleftarrow{\lambda_{j}} + \frac{\overleftarrow{\lambda_{j}}}{\overrightarrow{\lambda_{j-1}}}d(u_{j-1},z_{j-1}) + \abs{m-n}\overleftarrow{\lambda_{j}} \\
    &\le \overleftarrow{\lambda_{j}} + \frac{\overleftarrow{\lambda_{j}}}{\overrightarrow{\lambda_{j-1}}}d(u_{j-1},z_{j-1}) + (L+\frac{C}{\rho})\overleftarrow{\lambda_j}d(u_{j-1},v_{j}) \quad \text{by \eqref{eq:oplus}}\\
    &\le \overleftarrow{\lambda_{j}} + \frac{\overleftarrow{\lambda_{j}}}{\overrightarrow{\lambda_{j-1}}}d(y_{j-1},z_{j-1}) + (L+\frac{C}{\rho})\overleftarrow{\lambda_j}d(u_{j-1},v_{j}) \\
    &\le \overleftarrow{\lambda_{j}} + \frac{\overleftarrow{\lambda_{j}}}{\overrightarrow{\lambda_{j-1}}}d(y_{j-1},z_{j-1}) + (L+\frac{C}{\rho})\overleftarrow{\lambda_j}\Bigl (\overrightarrow{\lambda_{j-1}} + d(y_{j-1},y_{j}) + \overleftarrow{\lambda_{j}} \Bigr )\\
    &=\frac{\overleftarrow{\lambda_{j}}}{\overrightarrow{\lambda_{j-1}}}d(y_{j-1},z_{j-1}) + \overleftarrow{\lambda_{j}} + (L+\frac{C}{\rho})\overleftarrow{\lambda_j}\Bigl (\overrightarrow{\lambda_{j-1}} + \overleftarrow{\lambda_{j}} \Bigr ) + (L+ \frac{C}{\rho})\overleftarrow{\lambda_j}d(y_{j-1},y_j)\\
    &\le \frac{\overleftarrow{\lambda_{j}}}{\overrightarrow{\lambda_{j-1}}}d(y_{j-1},z_{j-1}) + \Bigl ( \overleftarrow{\lambda_{j}} + (L+\frac{C}{\rho})\overleftarrow{\lambda_j}\bigl (\overrightarrow{\lambda_{j-1}} + \overleftarrow{\lambda_{j}} \bigr ) \Bigr )\frac{d(y_{j-1},y_j)}{\rho} \\
    &+ (L+ \frac{C}{\rho})\overleftarrow{\lambda_j}d(y_{j-1},y_j)\\
    &= \frac{\overleftarrow{\lambda_{j}}}{\overrightarrow{\lambda_{j-1}}} d(y_{j-1},z_{j-1}) + L_{j}\,d(y_{j},y_{j-1})
\end{align*}
Because there are only finitely many pieces of $S$ and blocks of $N$, we can choose a constant $L'$ (may be maximum of all possible constants $L_j$) that is large enough to satisfy the conclusion of the lemma.
\end{proof}

\begin{lem}[Crossing a JSJ plane]
\label{lem: application rule of sines}
\[
\frac{d(y_{j},z_{j})}{d(y_{j},x_{j})} = \xi_{j} \cdot \frac{\overrightarrow{\lambda_{j}}}{\overleftarrow{\lambda_{j}}}
\]
\end{lem}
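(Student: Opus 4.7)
My plan is to do the entire calculation inside the Euclidean plane $\tilde T_j$, using the fact that the relevant lines are straight in this plane (by the homotopy modification made just before Lemma~\ref{lem:Inequality1}). First, since $\tilde g$ is an embedding and $\ell_j = \tilde B_{j-1}\cap \tilde B_j$, the intersection $\tilde g(\tilde S)\cap \tilde T_j$ equals the single line $\tilde g(\ell_j)$. Hence both $x_j$ and $z_j$, being the points where the two lines through $y_j$ parallel to the lifts of $\overleftarrow{\mathbf s_j}$ and $\overrightarrow{\mathbf s_j}$ meet $\tilde g(\tilde S)$, lie on $\tilde g(\ell_j)$.

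Next I would reduce the ratio $d(y_j,z_j)/d(y_j,x_j)$ to a ratio of sines. Let $r$ denote the perpendicular distance from $y_j$ to $\tilde g(\ell_j)$, let $\overleftarrow{\theta_j}$ be the angle the line parallel to $\overleftarrow{\mathbf s_j}$ makes with $\tilde g(\ell_j)$, and let $\overrightarrow{\theta_j}$ be the corresponding angle on the other side. By elementary planar trigonometry,
\[
   d(y_j,x_j) \;=\; \frac{r}{\sin\overleftarrow{\theta_j}},
   \qquad
   d(y_j,z_j) \;=\; \frac{r}{\sin\overrightarrow{\theta_j}},
\]
so the ratio in question equals $\sin\overleftarrow{\theta_j}\big/\sin\overrightarrow{\theta_j}$.

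Finally, I would identify this ratio of sines with $\xi_j\cdot\overrightarrow{\lambda_j}/\overleftarrow{\lambda_j}$ by a covolume (parallelogram area) computation for the two relevant sublattices of $\pi_1(T_j)$ acting on $\tilde T_j$ by translations. The subgroup $\pi_1(\overleftarrow{T'_j})$ has index $\overleftarrow{h_j}$ in $\pi_1(T_j)$, and is generated (inside $\pi_1(T_j)$) by the translations along $c_j$ and along $\overleftarrow{\mathbf s_j}$, which have lengths $|c_j|$ and $\overleftarrow{\lambda_j}$ and make angle $\overleftarrow{\theta_j}$. Computing the covolume two ways yields
\[
   \overleftarrow{h_j}\cdot\mathrm{Area}(T_j) \;=\; |c_j|\cdot\overleftarrow{\lambda_j}\cdot\sin\overleftarrow{\theta_j},
\]
and symmetrically for the other side. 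Dividing these two identities eliminates $\mathrm{Area}(T_j)$ and $|c_j|$, producing
\[
   \frac{\sin\overleftarrow{\theta_j}}{\sin\overrightarrow{\theta_j}}
   \;=\;
   \frac{\overleftarrow{h_j}}{\overrightarrow{h_j}}\cdot
   \frac{\overrightarrow{\lambda_j}}{\overleftarrow{\lambda_j}}
   \;=\; \xi_j\cdot\frac{\overrightarrow{\lambda_j}}{\overleftarrow{\lambda_j}},
\]
where the last equality is the definition of $\xi_j=\xi_{e_j}=h_{e_j}/h_{-e_j}$ from Definition~\ref{defn:DilationSlopes}. Combined with the trigonometric identity above, this gives the claim. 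The only mild subtlety is making sure that $c_j$ really appears as a primitive translation of the same length $|c_j|$ in both sublattices; this follows from the fact that $c_j$ is the common boundary curve of the pieces on both sides and that $\tilde g$ is an embedding, so the class of $c_j$ sits inside $\pi_1(\overleftarrow{T'_j})\cap \pi_1(\overrightarrow{T'_j})$ with the same translation length on the plane $\tilde T_j$.
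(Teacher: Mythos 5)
Your argument is correct, and it takes a genuinely different route from the paper. The paper proves the identity by a slice-counting argument: it chooses integers $n$ and $m$ so that the slice $\tilde B_{j-1}\times\{n\}$ of $\tilde M_{B_{j-1}}$ is glued to the slice $\tilde B_j\times\{m\}$ of $\tilde M_{B_j}$, observes that $\lvert n\rvert\,[\overleftarrow{T'_j}:\overleftarrow{T_j}]=\lvert m\rvert\,[\overrightarrow{T'_j}:\overrightarrow{T_j}]$ by matching covering degrees, and then reads off $d(y_j,x_j)=\lvert n\rvert\,\overleftarrow{\lambda_j}$ and $d(y_j,z_j)=\lvert m\rvert\,\overrightarrow{\lambda_j}$. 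You instead reduce to a single well-chosen plane, express both distances as $r/\sin\theta$ with a common perpendicular height $r$, and then identify the sine ratio through a covolume (fundamental-parallelogram) computation for the two sublattices $\pi_1(\overleftarrow{T'_j})$ and $\pi_1(\overrightarrow{T'_j})$ of $\pi_1(T_j)$, each generated by $[g(c_j)]$ and the corresponding degeneracy slope. The only points that need checking are the ones you flagged, and they hold: $\tilde g(\tilde S)\cap\tilde T_j$ is the single line $\tilde g(\ell_j)$ because $\zeta$ is injective (Proposition~\ref{prop:TreeBijective}) and $\tilde g$ is an embedding; the class $[g(c_j)]$ is primitive in both $\pi_1(\overleftarrow{T'_j})$ and $\pi_1(\overrightarrow{T'_j})$ since it is a boundary of the respective fiber surfaces; and each degeneracy slope is a closed leaf meeting $c_j$ with intersection number one (the monodromy fixes its periodic boundary points, so each leaf wraps around the base circle exactly once), hence $\{[c_j],[\overleftarrow{\mathbf s_j}]\}$ and $\{[c_j],[\overrightarrow{\mathbf s_j}]\}$ really are lattice bases. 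The trade-off is mostly expository: the paper's proof leans on the parametrization of $\tilde M_{B}$ as $\tilde B\times\R$ already set up for Lemma~\ref{lem:Inequality1} and Construction~\ref{cons:sequence x}, so it recycles notation; your proof is self-contained within $\tilde T_j$ and makes the role of the flat metric and the two sublattices explicit, which arguably explains better why the ratio depends only on the covering degrees and the slope lengths and not on any further choices.
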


\begin{proof}
Choose non-zero integers $n$ and $m$ such that the slice $\tilde{B}_{j-1} \times \{n\}$ of $\tilde{M}_{j-1} = \tilde{M}_{B_{j-1}}$ is glued into the slice $\tilde{B_{j}} \times \{m\}$ of $\tilde{M}_{j} = \tilde{M}_{B_{j}}$. Choose a point $y'$ in $\tilde{B}_{j-1} \times \{n\}$, and two points $x'$ and $z'$ in $\tilde{S} \cap \tilde{T}_{j}$ such that $[y',z']$ and $[y_{j},z_{j}]$ are parallel segments as well as $[y',x']$ and $[y_{j},x_{j}]$ are parallel segments. Since $\Delta(x',y',z')$ and $\Delta(x_{j},y_{j},z_{j})$ are similar triangles, it follows that 
\[
d(y',z') \bigl / d(y',x') = d(y_{j},z_{j}) \bigl / d(y_{j},x_{j})
\]
Thus, without loss of generality, we may assume that $y_{j}$ belongs to the slice $\tilde{B_{j}} \times \{m\}$, and $y_{j}$ belongs to the slice $\tilde{B}_{j-1} \times \{n\}$. We note that
\[
 \abs{n} \bigl [\overleftarrow{T'_{j}}:\overleftarrow{T_{j}} \bigr ] = \abs{m} \bigl [\overrightarrow{T'_{j}}:\overrightarrow{T_{j}} \bigr ] 
\]
Thus
\[
\abs{m} \bigl /\abs{n}  =  \bigl [\overleftarrow{T'_{j}}:\overleftarrow{T_{j}} \bigr ] \bigl / \bigl [\overrightarrow{T'_{j}}:\overrightarrow{T_{j}} \bigr ] = \xi_{j}
\]
Since $d(y_{j},z_{j}) = \abs{m}\,\overrightarrow{\lambda_{j}}$ and $d(y_{j},x_{j}) = \abs{n}\,\overleftarrow{\lambda_{j}}$, we have
\[
\frac{d(y_{j},z_{j})}{d(y_{j},x_{j})} = \xi_{j} \cdot \frac{\overrightarrow{\lambda_{j}}}{\overleftarrow{\lambda_{j}}}
\]
\end{proof}

\begin{proof}[Proof of Proposition~\ref{prop:mixedupperbound}]
We assume that the base point $s_0$ belongs to a curve in the collection $\mathcal{T}_g$.
For any $h \in \pi_1(S,s_0)$ such that $d \bigl (\tilde{s}_0,h(\tilde{s}_0) \bigr ) \le n$, we will show that  $d_{\tilde{S}}\bigl(\tilde{s}_0,h(\tilde{s}_0) \bigr)$ is bounded above by a double exponential function in terms of $n$. Let $L$ be the constant given by Remark~\ref{rem:constants LandC}, and let $L'$ be the constant given by Lemma~\ref{lem:Inequality1}. We consider the following cases:

{\bf Case 1:} 
$\tilde{s}_0$ and $h(\tilde{s}_0)$ belong to the same a piece $\tilde{B}$.
By Remark~\ref{rem:distortionwellknow} then  $d_{\tilde{S}}\bigl (\tilde{s}_0,h(\tilde{s}_0) \bigr) \le e^{Ln + L}$ which is dominated by an exponential function.

{\bf Case 2:} 
$\tilde{s}_0$ and $h(\tilde{s}_0)$ belong to distinct pieces of $\tilde{S}$.
Let $y_{j}$, $x_{j}$, and $z_{j}$ be points described as in the previous paragraphs. For convenience, relabel $\tilde{s}_{0}$ by $y_{0}$, and $h(\tilde{s}_{0})$ by $y_{k+1}$.

{\bf Claim 1:} Let $z_{0} = y_{0}$ and $z_{k+1} =y_{k+1}$. We have the following inequality.
\[d_{\tilde{S}}\bigl (\tilde{s}_0,h(\tilde{s}_0)\bigr ) \le e^{Ln}\sum_{j=0}^{k}e^{Ld(z_j,y_j) +Ld(z_{j+1},y_{j+1}) + L}
\]
We write $\beta = \beta_{0}\cdot \beta_{1} \cdots \beta_{k}$ where $\beta_j$ is the subpath of $\beta$ in $\tilde{M}_j$ connecting $y_j$ to $y_{j+1}$. For each $j = 0,\dots,k$ we have
\begin{align*}
   d(z_{j},z_{j+1}) &\le d(z_{j},y_{j}) + d(y_{j},y_{j+1}) + d(y_{j+1}, z_{j+1}) \\
   &\le d(z_{j},y_{j}) + \abs{\beta_{j}} + d(y_{j+1}, z_{j+1})
   \end{align*}
Using Remark~\ref{rem:constants LandC} we obtain
\begin{align*}
  d_{\tilde{S}}(\tilde{s}_{0},h(\tilde{s}_0)) =   d_{\tilde{S}}(y_{0},y_{k+1}) &\le \sum_{j=0}^{k} d_{\tilde{S}}(z_{j},z_{j+1}) \le \sum_{j=0}^{k}e^{Ld(z_{j},z_{j+1}) + L}\\
    &\le \sum_{j=0}^{k}e^{L\bigl (d(z_j,y_j) + \abs{\beta_j} + d(z_{j+1},y_{j+1}) \bigr ) + L}\\
    &\le \sum_{j=0}^{k}e^{L\bigl (d(z_j,y_j) + n + d(z_{j+1},y_{j+1}) \bigr ) + L}\\
    &\le e^{Ln}\sum_{j=0}^{k}e^{L\bigl (d(z_j,y_j) +d(z_{j+1},y_{j+1}) \bigr ) + L}
\end{align*}
Claim~1 is confirmed.


 We note that if $F(n) \sim e^{e^{n}}$, and $E(n) \sim e^{n}$ then $e^{n} F(n) \sim e^{e^{n}}$ and $e^{n} E(n) \sim e^{n}$. To complete the proof of the proposition, it suffices to find an appropriate upper bound of the sum appearing in Claim~1 which is  a double exponential function in general, and exponential function when the spirality of $S$ is trivial.  
 
 By  Lemma~\ref{lem:Inequality1}, we have 
\begin{equation}
\tag{$*$}
\label{eq:blacklozenge}
d(y_{j},x_{j}) \le \frac{\overleftarrow{\lambda_{j}}}{\overrightarrow{\lambda_{j-1}}} d(y_{j-1},z_{j-1}) + L'\,\abs{\beta_{j-1}}
\end{equation}
By Lemma \ref{lem: application rule of sines}, we have
\begin{equation}
\tag{$\dagger$}
\label{eq:ddagger}
   d(y_j,z_j) = \xi_{j} \cdot \frac{\overrightarrow{\lambda_{j}}}{\overleftarrow{\lambda_{j}}}\,d(y_j,x_j)
\end{equation}

{\bf Claim 2:} Suppose that the spirality of $S$ is non-trivial. There exists a function $F$ not depending on $\beta$, $n$, and $h$ such that
\[
\sum_{j=0}^{k}e^{Ld(z_j,y_j) +Ld(z_{j+1},y_{j+1})  + L} \le F(n)
\] and $F(n) \sim e^{e^{n}}$

Let $\epsilon$ be the governor of $g$ with respect to the chosen mapping tori (see Definition~\ref{defn:DilationSlopes}). Since the spirality of $S$ is non-trivial, it follows that $\epsilon $ is strictly greater than $1$.
Let $\mathcal{D}$ be the collection of the degeneracy slopes given by Set up~\ref{setup:1}. We note that $\mathcal{D}$ is a finite collection. For each degeneracy slope $\mathbf{s}_{cB}$, let $\abs{\mathbf{s}_{cB}}$ be the length of path lift of $\mathbf{s}_{cB}$ in $\tilde{N}$ with respect to $d$--metric. Let $\delta$ be the maximum of all possible ratios $\abs{\mathbf{s}_{cB}} / \abs{\mathbf{s}_{c'B'}}$.

We will show that for each $j = 0,\dots, k$ then
\begin{equation}
\label{eq:claim2}
    d(y_j, z_j) \le \frac{L'\delta n}{\epsilon-1}\epsilon^{j+1}
\end{equation}
To see (\ref{eq:claim2}), we first show by induction on $j =0,\dots, k$ that 
\[
d(y_j,z_j) \le L'n\sum_{i=1}^{j}\frac{\overrightarrow{\lambda_{j}}}{\overleftarrow{\lambda_{j+1-i}}}\epsilon^{i}
\]
The base case is trivial since $y_0 =z_0$, so both sides of the inequality equal zero. For inductive step, we use \eqref{eq:blacklozenge}, \eqref{eq:ddagger}, the inequality $\xi_j \le \epsilon$, and the fact $\abs{\beta_{j-1}} \le n$  to see that
\begin{align*}
    d(y_j,z_j) &= \xi_{j}\,\frac{\overrightarrow{\lambda_{j}}}{\overleftarrow{\lambda_{j}}}d(y_j,x_j) \le \epsilon\, \frac{\overrightarrow{\lambda_{j}}}{\overleftarrow{\lambda_{j}}}d(y_j,x_j) \le \epsilon\, \frac{\overrightarrow{\lambda_{j}}}{\overleftarrow{\lambda_{j}}} \Bigl ( \frac{\overleftarrow{\lambda_{j}}}{\overrightarrow{\lambda_{j-1}}}d(y_{j-1},z_{j-1}) + L'\abs{\beta_{j-1}} \Bigr )\\
    &\le \epsilon\, \frac{\overrightarrow{\lambda_j}}{\overrightarrow{\lambda_{j-1}}} d(y_{j-1},z_{j-1}) + \epsilon\, \frac{\overrightarrow{\lambda_{j}}}{\overleftarrow{\lambda_{j}}} L' \abs{\beta_{j-1}}\\
    &\le \epsilon\, \frac{\overrightarrow{\lambda_j}}{\overrightarrow{\lambda_{j-1}}} d(y_{j-1},z_{j-1}) + \epsilon\, \frac{\overrightarrow{\lambda_{j}}}{\overleftarrow{\lambda_{j}}} L'n\\
    &\le  \epsilon\, \frac{\overrightarrow{\lambda_j}}{\overrightarrow{\lambda_{j-1}}} L'n \sum_{i=1}^{j-1}\frac{\overrightarrow{\lambda_{j-1}}}{\overleftarrow{\lambda_{j-i}}}\epsilon^{i} + \epsilon\, \frac{\overrightarrow{\lambda_{j}}}{\overleftarrow{\lambda_{j}}} L'n\\
    &= L'n\sum_{i=1}^{j-1}\frac{\overrightarrow{\lambda_{j}}}{\overleftarrow{\lambda_{j-i}}}\epsilon^{i+1} + \epsilon\, \frac{\overrightarrow{\lambda_{j}}}{\overleftarrow{\lambda_{j}}} L'n = L'n \Bigl (\sum_{i=1}^{j-1}\frac{\overrightarrow{\lambda_{j}}}{\overleftarrow{\lambda_{j-i}}}\epsilon^{i+1} + \epsilon\, \frac{\overrightarrow{\lambda_{j}}}{\overleftarrow{\lambda_{j}}} \Bigr )\\
    &= L'n \Bigl (\sum_{i=2}^{j}\frac{\overrightarrow{\lambda_{j}}}{\overleftarrow{\lambda_{j+1-i}}}\epsilon^{i} + \epsilon\, \frac{\overrightarrow{\lambda_{j}}}{\overleftarrow{\lambda_{j}}} \Bigr ) = 
     L'n\sum_{i=1}^{j}\frac{\overrightarrow{\lambda_{j}}}{\overleftarrow{\lambda_{j+1-i}}}\epsilon^{i}
\end{align*}
Since $\overleftarrow{\lambda_j} \bigl / \overleftarrow{\lambda_{j+1-i}}$ is bounded above by $\delta$, it follows that
\[
d(y_j,z_j) \le L'n\delta \sum_{i=1}^{j}\epsilon^{i} \le L'n\delta \epsilon^{j+1} \bigl/ (\epsilon-1),
\]
establishing (\ref{eq:claim2}).
Summing over $j$, we obtain
\[
\sum_{j=1}^{k}d(y_j,z_j) \le \frac{L'\delta n}{\epsilon-1} \bigl (\epsilon^{2} + \cdots + \epsilon^{k+1}) \le  \frac{L'\delta n \epsilon^{2}}{(\epsilon-1)^{2}} \epsilon^{k+2} \le \frac{L'\delta n \epsilon^{2}}{(\epsilon-1)^{2}} \epsilon^{n/\rho+2}
\]
which is equivalent to an exponential function of $n$.
We use the facts $y_0 =z_0$, $y_{k+1} =z_{k+1}$, $L \ge 1$, and the fact that $e^{x}$ is superadditive on $[1,\infty)$ to see that
\begin{align*}
    \sum_{j=0}^{k}e^{Ld(z_j,y_j) +Ld(z_{j+1},y_{j+1}) + L} &\le e^{\sum_{j=0}^{k}Ld(z_j,y_j) +Ld(z_{j+1},y_{j+1}) + L}\\
    &= e^{(k+1)L + \sum_{j=0}^{k}Ld(z_j,y_j) +Ld(z_{j+1},y_{j+1})}\\
    &= e^{(k+1)C + 2L\sum_{j=1}^{k}d(y_j,z_j)}\\
    & \le e^{(n/\rho +1) + 2L\sum_{j=1}^{k}d(y_j,z_j)}
\end{align*}
which is equivalent to a double exponential function (because $\sum_{j=1}^{k}d(y_j,z_j)$ is equivalent to an exponential function of $n$, so $(n/\rho +1) + 2L\sum_{j=1}^{k}d(y_j,z_j)$ is also equivalent to an exponential function of $n$). Claim~2 is confirmed.

{\bf Claim 3:} Suppose the spirality of $S$ is trivial. There exists a function $E$ not depending on $\beta$, $n$, and $h$ such that
\[
\sum_{j=1}^{k}e^{Ld(y_{j},z_{j}) + Ld(y_{j+1},z_{j+1}) + L} \le E(n)
\] and $E(n) \sim e^{n}$.

For any $1 \le i \le j$, let $\Theta_{i,j}= \xi_{i} \,\xi_{i+1} \cdots \xi_{j}$.
Let $\Lambda$ be the constant given by Proposition~\ref{prop:virtualupper}. In order to prove Claim~3, we follow the argument in the proof of Claim~3 of Theorem~6.1 in \cite{Hruska-Nguyen}. We note that 
\[
\Theta_{i,j-1}\,\xi_{j} = \Theta_{i,j}
\]
We will show by induction on $j =0,1,\dots, k$ that
\[
d(y_j,z_j) \le L'\sum_{i=1}^{j}\abs{\beta_{i-1}}\frac{\overrightarrow{\lambda_{j}}}{\overleftarrow{\lambda_i}}\,\Theta_{i,j}
\]
The base case $j=0$ is trival since both sides of the inequality equal zero. For the inductive step, we use \eqref{eq:blacklozenge}, \eqref{eq:ddagger} to see that
\begin{align*}
    d(y_j,z_j) &= \xi_{j}\,\frac{\overrightarrow{\lambda_j}}{\overleftarrow{\lambda_j}}d(y_j,x_j) = d(y_j,x_j)\frac{\overrightarrow{\lambda_j}}{\overleftarrow{\lambda_j}}\,\xi_{j}\\
    &\le \Bigl (\frac{\overleftarrow{\lambda_j}}{\overrightarrow{\lambda_{j-1}}}\,d(y_{j-1},z_{j-1}) +L'\abs{\beta_{j-1}}\Bigr)\,\frac{\overrightarrow{\lambda_j}}{\overleftarrow{\lambda_j}}\,\xi_{j}\\
    &=d(y_{j-1},z_{j-1})\frac{\overrightarrow{\lambda_j}}{\overrightarrow{\lambda_{j-1}}}\,\xi_{j} + L'\abs{\beta_{j-1}}\frac{\overrightarrow{\lambda_j}}{\overleftarrow{\lambda_j}}\,\xi_j\\
    &\le \Bigl (L'\sum_{i=1}^{j-1}\abs{\beta_{i-1}}\,\frac{\overrightarrow{\lambda_{j-1}}}{\overleftarrow{\lambda_i}}\,\Theta_{i,j-1} \Bigr)\,\frac{\overrightarrow{\lambda_j}}{\overrightarrow{\lambda_{j-1}}}\,\xi_{j} + L'\abs{\beta_{j-1}}\frac{\overrightarrow{\lambda_j}}{\overleftarrow{\lambda_j}}\,\xi_j\\
    &= L'\sum_{i=1}^{j-1}\abs{\beta_{i-1}}\frac{\overrightarrow{\lambda_j}}{\overleftarrow{\lambda_i}}\,\Theta_{i,j-1}\,\xi_j + L'\abs{\beta_{j-1}}\frac{\overrightarrow{\lambda_j}}{\overleftarrow{\lambda_j}}\,\xi_j\\
    &= L'\sum_{i=1}^{j-1}\abs{\beta_{i-1}}\frac{\overrightarrow{\lambda_j}}{\overleftarrow{\lambda_i}}\,\Theta_{i,j} + L'\abs{\beta_{j-1}}\frac{\overrightarrow{\lambda_j}}{\overleftarrow{\lambda_j}}\,\Theta_{j,j}\\
    &= L'\sum_{i=1}^{j}\abs{\beta_{i-1}}\frac{\overrightarrow{\lambda_j}}{\overleftarrow{\lambda_i}}\,\Theta_{i,j}
\end{align*}
Since $\Theta_{i,j}$ is bounded above by $\Lambda$, $\overrightarrow{\lambda_j} / {\overleftarrow{\lambda_i}}$ is bounded above by $\delta$, and $\sum_{i=1}^{j}\abs{\beta_{i-1}} \le \abs{\beta} \le n$, we have
\[d(y_j, z_j) \le L'\sum_{i=1}^{j}\abs{\beta_{i-1}}\delta \Lambda = L'\delta \Lambda \sum_{i=1}^{j}\abs{\beta_{i-1}} \le L' \delta \Lambda n\]
It follows that \[e^{Ld(y_j,z_j) + Ld(y_{j+1},z_{j+1}) + L} \le e^{ 2LL'\delta \Lambda n + L}\]
Summing over $j$, we obtain
\begin{align*}
    \sum_{j=0}^{k}e^{Ld(y_j,z_j) + Ld(y_{j+1},z_{j+1}) + L} &\le \sum_{j=0}^{k}e^{ 2LL'\delta \Lambda n + L} = (k+1)e^{ 2LL'\delta \Lambda n + L} \\
    &\le (n/\rho +1)e^{ 2LL'\delta \Lambda n + L}
\end{align*}
which is equivalent to an exponential function of $n$, establishing Claim~3.

If the spirality of $S$ is non-trivial, Claim~1 combines with Claim~2 gives a double exponential upper bound for $d_{\tilde{S}} \bigl (\tilde{s}_{0},h(\tilde{s}_{0}) \bigr )$. If the spirality of $S$ is trivial, we combine Claim~1 and Claim~3 to get an exponential upper bound. The proposition follows from Corollary~\ref{cor:GeometricDistortion}.
\end{proof}
\subsection{Lower bound of the distortion}
\label{sub:1}
In this subsection, we compute the lower bound of the distortion of $\pi_1(S)$ in $\pi_1(N)$.

\begin{prop}
The distortion of $\pi_1(S)$ in $\pi_1(N)$ is at least exponential.
\end{prop}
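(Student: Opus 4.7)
The plan is to locate a subgroup inside $\pi_1(S)$ whose distortion in $\pi_1(N)$ is already exponential, and then to transfer this bound up to $\pi_1(S)$ itself via Proposition~\ref{prop:distortion}. By hypothesis $S$ contains at least one geometrically infinite piece $B$; let $M$ be the hyperbolic block of $N$ such that $g(B)\subset M$. By Remark~\ref{rem:distortionwellknow}, the distortion of $\pi_1(B)$ in $\pi_1(M)$ is exponential, so this piece is the natural witness.

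To invoke Proposition~\ref{prop:distortion} with $K'=\pi_1(B)$, $G'=\pi_1(M)$, $K=\pi_1(S)$ and $G=\pi_1(N)$, two undistortion statements need to be recorded. First, $\pi_1(B)$ is undistorted in $\pi_1(S)$: this is exactly the observation already used in the proof of Theorem~\ref{thm:distortionalmostfiber}, namely that every finitely generated subgroup of a surface or free group is undistorted (applied here to the essential subsurface $B\subset S$, whose boundary is a subcollection of the geodesic curves $\mathcal{T}_g$). Second, $\pi_1(M)$ is undistorted in $\pi_1(N)$: in the mixed-manifold setting this is immediate from the nonpositively curved Leeb metric of Subsection~\ref{rem:metriconmixedmanifold}, under which each lift $\tilde{M}\subset\tilde{N}$ is a convex subspace, so that the inclusion $\pi_1(M)\hookrightarrow\pi_1(N)$ is a quasi-isometric embedding of orbits.

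With both hypotheses in hand, Proposition~\ref{prop:distortion} gives
\[
\Delta_{\pi_1(B)}^{\pi_1(M)} \preceq \Delta_{\pi_1(S)}^{\pi_1(N)},
\]
and since the left-hand side is exponential the right-hand side is at least exponential, as required. I do not expect a serious obstacle here; the only point deserving care is the undistortion of the block subgroup $\pi_1(M)$ in $\pi_1(N)$, which I would cite directly from the Leeb/JSJ graph-of-groups setup rather than reprove. (The hypothesis that $S$ contains a \emph{geometrically infinite} piece, rather than merely some piece, is essential: a vertical or geometrically finite piece contributes only a linear lower bound, so a nontrivial lower bound genuinely requires the presence of an infinite-type piece to seed the argument.)
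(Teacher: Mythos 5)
Your proof is correct and follows exactly the same route as the paper: locate the geometrically infinite piece $B$ in its hyperbolic block $M$, cite Remark~\ref{rem:distortionwellknow} for exponential distortion of $\pi_1(B)$ in $\pi_1(M)$, use the undistortion of $\pi_1(B)$ in $\pi_1(S)$ and of $\pi_1(M)$ in $\pi_1(N)$, and transfer via Proposition~\ref{prop:distortion}. The extra justification you offer for the undistortion of the block subgroup via the Leeb metric is a reasonable elaboration of a point the paper states without proof.
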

\begin{proof}
We recall that $S$ contains a geometrically infinite piece. The fundamental group of the geometrically infinite piece is exponentially distorted in the fundamental group of the corresponding hyperbolic block of $N$ (see Remark~\ref{rem:distortionwellknow}). The fundamental group of the geometrically infinite piece is undistorted in $\pi_1(S)$ (in fact, every finitely generated subgroup of $\pi_1(S)$ is undistorted), and the fundamental group of the hyperbolic block is undistorted in $\pi_1(N)$. We combine these facts and Proposition~\ref{prop:distortion} to get the proof of this proposition.
\end{proof}

For the rest of this subsection, we will compute the lower bound (double exponential) of the distortion of $\pi_1(S)$ in $\pi_1(N)$ when the spirality of $S$ is non-trivial. 

\begin{prop}
\label{prop:lower}
The distortion $\pi_1(S)$ in $\pi_1(N)$ is at least double exponential if the spirality of $S$ is non-trivial.
\end{prop}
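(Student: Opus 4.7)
The plan is to exhibit, for each $m \ge 1$, an explicit element $h_m \in \pi_1(S, s_0)$ satisfying
\[
\abs{h_m}_{\pi_1(N)} \le C_1 m \quad \text{and} \quad \abs{h_m}_{\pi_1(S)} \ge \mu^{\lambda^m},
\]
for constants $\mu > 1$, $\lambda > 1$, $C_1 > 0$ independent of $m$. Setting $n = C_1 m$ then produces an element with $\abs{h_m}_{\pi_1(N)} \le n$ and $\abs{h_m}_{\pi_1(S)} \ge \mu^{\lambda^{n/C_1}}$, yielding double exponential distortion via Corollary~\ref{cor:GeometricDistortion}. The strategy blends two exponential growth mechanisms in a compounded Baumslag--Solitar fashion: the spirality cycle $\gamma$ with $\abs{w(\gamma)} > 1$ supplies the outer ``scaling'' generator, and the pseudo-Anosov suspension flow of a geometrically infinite piece supplies an inner layer of exponential growth.

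Non-triviality of $w$ yields a directed $1$-cycle $\gamma$ in $\Gamma(\Phi(\mathcal{T}_g))$ with $w(\gamma) = \lambda$, $\abs{\lambda} > 1$; after replacing $\gamma$ by $\gamma^{-1}$ if needed, assume $\lambda > 1$. Connectivity of the dual graph, together with the hypothesis that $S$ contains a geometrically infinite piece, lets me arrange (possibly after concatenating $\gamma$ with auxiliary paths through a geometrically infinite piece, which preserves the spirality) that $\gamma$ passes through a geometrically infinite piece $B_0$. Let $\phi_0$ be the pseudo-Anosov monodromy of the mapping torus $M_{B_0}$ (Remark~\ref{rem:virtual fiber}), let $t_0 \in \pi_1(M_{B_0}) \subset \pi_1(N)$ represent the suspension flow generator, and fix an essential non-peripheral loop $\beta \in \pi_1(B_0) \subset \pi_1(S)$. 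Define
\[
h_m \;:=\; t_0^{\lambda^m}\, \beta\, t_0^{-\lambda^m} \;=\; \phi_0^{\lambda^m}(\beta) \;\in\; \pi_1(B_0) \;\subset\; \pi_1(S).
\]

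The lower bound $\abs{h_m}_{\pi_1(S)} \ge \mu^{\lambda^m}$ is the easier half: pseudo-Anosov iteration stretches every non-peripheral essential element exponentially (cf.\ Lemma~\ref{lem:pseudo}(1), applied with $N := \lambda^m$), so $\abs{\phi_0^N(\beta)}_{\pi_1(B_0)} \ge \mu^N$, and $\pi_1(B_0)$ is undistorted in $\pi_1(S)$ since every finitely generated subgroup of a surface group is undistorted. For the upper bound $\abs{h_m}_{\pi_1(N)} = O(m)$, the crux is a Baumslag--Solitar-type identity
\[
\gamma\, t_0\, \gamma^{-1} \;=\; t_0^{\lambda}\, b, \qquad b \in \pi_1(B_0),
\]
inside $\pi_1(N)$, which says that transporting the degeneracy slope of $M_{B_0}$ once around the spirality cycle multiplies its covering degree by $\lambda = w(\gamma)$. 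Iterating gives $\gamma^m t_0 \gamma^{-m} = t_0^{\lambda^m}\, b_m$ with $b_m \in \pi_1(B_0)$ of bounded complexity relative to $m$, so rewriting
\[
h_m \;=\; (\gamma^m t_0 \gamma^{-m})\, \beta'\, (\gamma^m t_0 \gamma^{-m})^{-1}
\]
for a suitable $\beta' \in \pi_1(B_0)$ (absorbing the conjugation by $b_m$) exhibits $h_m$ as a word of length $O(m)$ in the fixed generating set of $\pi_1(N)$.

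The main technical obstacle is establishing the Baumslag--Solitar-type identity rigorously from the spirality data: $\gamma$ lies in $\pi_1(S)$ while $t_0$ lies outside $\pi_1(S)$, and their conjugation relation must be extracted from the definitions of $\xi_e$ and $h_e$ together with the bookkeeping of how the finite covers $M_{B_v}$ patch across JSJ tori, in the formalism of Liu \cite{YiLiu2017} and Sun \cite{Sun18}. This is the mixed-manifold analog of the Hruska--Nguyen construction \cite{Hruska-Nguyen} for graph manifolds, with the pseudo-Anosov dilation $\mu > 1$ replacing the trivial monodromy of horizontal Seifert pieces; the additional pseudo-Anosov layer is precisely what lifts the exponential distortion in the graph-manifold case to double exponential in the mixed-manifold case.
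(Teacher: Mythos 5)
Your high-level intuition is right and matches the paper's strategy: pick a spirality cycle $\gamma$ with $w(\gamma)>1$ routed through a geometrically infinite piece, so that the exponential spiraling of degeneracy slopes compounds with the pseudo-Anosov stretching to produce doubly exponential growth, with the conjugation $t_0^N \beta t_0^{-N} = \phi_0^N(\beta)$ playing the role of the paper's ``doubled'' path $\sigma_n = \tau_n\cdot\tilde c_1\cdot\tau_n^{-1}$. The lower-bound half is essentially correct. But the upper-bound half --- writing $h_m$ as an $O(m)$-length word in $\pi_1(N)$ --- rests on a Baumslag--Solitar-type identity that is neither established nor, as stated, true, and this is where the proposal has a genuine gap.

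Three concrete problems. First, $\lambda = w(\gamma)=\prod\xi_{e_i}$ is a positive \emph{rational}, not an integer, so $t_0^{\lambda^m}$ is not a well-defined group element; the image of the spirality homomorphism $w$ in $\Q^\times$ need contain no integer $>1$ (e.g.\ the subgroup generated by $3/2$). The paper circumvents this via Lemma~\ref{lem:construct sequence}, which builds a carefully chosen integer sequence $\{t_j\}$ with $t_j/\xi_j\in\N$ and $0 \le t_j/\xi_j - t_{j-1}\le A$, and which still grows like $w(\gamma)^n$. Second, the identity $\gamma t_0 \gamma^{-1} = t_0^\lambda b$ with $b\in\pi_1(B_0)$ cannot hold in the form written: $\gamma$, viewed in $\pi_1(N)$, does \emph{not} stabilize the block $\tilde M_0$ of $\tilde N$ containing a fixed lift $\tilde B_0$ (traversing the spirality cycle crosses several distinct JSJ planes), so $\gamma\tilde B_0$ lies in a \emph{different} lift of $M_0$ and $\gamma t_0 \gamma^{-1}$ is the suspension-flow element of that other block, with no reason to be a $t_0$-power times an element of $\pi_1(B_0)$. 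Third, even granting some such relation, the iterate $\gamma^m t_0 \gamma^{-m} = t_0^{\lambda^m} b_m$ would require $\abs{b_m}$ to be $O(m)$ for your rewriting of $h_m$ to have length $O(m)$; but the naive expansion produces $b_m$ as a product of $\phi_0$-iterates $\phi_0^{-k}(b)$, whose lengths grow like $\mu^k$, so the ``bounded complexity'' of $b_m$ is not automatic and would need a nontrivial structural input (e.g.\ that $b$ lies in a peripheral $\Z^2$ commuting with $t_0$), which you have not supplied.

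The paper avoids the rewriting problem entirely by working geometrically in $\tilde N$: Construction~\ref{cons:sequence x} produces a chain of points $y_1,\dots,y_{nm+1}$ linked by degeneracy-slope segments and lifts of the $\gamma_j$, giving a path in $\tilde N$ from $z_1$ to $x_{nm+1}$ of length $O(n)$ (Lemma~\ref{lem:linearbound}); the doubled path $\sigma_n$ has $\tilde N$-endpoints at linear distance, while Lemmas~\ref{lem:lowerdouble}--\ref{lem:doubleexponentiallower} use Lemma~\ref{lem:pseudo} to show the $d_{\tilde S}$-distance between those endpoints is doubly exponential. To repair your argument along its own lines you would need to replace $t_0^{\lambda^m}$ by something like the deck transformation tracking the integer sequence $t_j$ of Lemma~\ref{lem:construct sequence}, and replace the BS identity with a bound (as in Lemma~\ref{lem:linearbound}) on the $\tilde N$-distance along the chain of slices, at which point you are essentially redoing the paper's construction.
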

{\bf{The Goal}}: Let $\tilde{s}_0$ be a lift of $s_0$ in $\tilde{S}$. For convenience, we label $\tilde{s}_{0}$ by $z_1$. Our goal in this section is to construct a sequence of elements $\{z_{n}\}$ in $\tilde{S}$ such that $d(z_{1},z_{n}) \le n$ and $d_{\tilde{S}}(z_{1},z_{n})$ is bounded from below by a double exponential function in terms of $n$.

\begin{lem}
\label{lem:construct sequence}
Let $\gamma$ be a geodesic loop in $S$ such that $\gamma$ and $\mathcal{T}_{g}$ have nonempty intersection and such that $w(\gamma) >1$. There exists a positive number $A = A(\gamma)$ such that the following holds: Let $\{c_1,\dots,c_m\}$ be the sequence of curves of $\mathcal{T}_{g}$ (see Definition~\ref{defn:clean surface}) crossed by $\gamma$. The image of the circle $g(c_i)$ in $M$ lies in a JSJ torus $T_i$ obtained by gluing to a boundary torus $\overleftarrow{T_i}$ of $M_{i-1}$ to a boundary torus $\overrightarrow{T_i}$ of $M_{i}$. Let $\overleftarrow{T'_{i}}$ and $\overrightarrow{T'_{i}}$ be the boundary tori of $M_{B_{i-1}}$ and $M_{B_{i}}$ where the circle $c_{i}$ is embedded into $\overleftarrow{T'_{i}}$ and $\overrightarrow{T'_{i}}$ respectively. For each $i= 1,2,\dots,m$, let $\xi_{i} = \bigl [\overleftarrow{T_{i}'}:\overleftarrow{T_{i}} \bigr ] \bigl / \bigl [\overrightarrow{T_{i}'}:\overrightarrow{T_{i}}]$. 
Extend the sequence $\xi_1,\dots,\xi_m$ to a periodic sequence $\{\xi_j\}_{j=1}^\infty$ with $\xi_{j+m} = \xi_j$ for all $j>0$. Then there exists a \textup{(}nonperiodic\textup{)} sequence of integers $\bigl\{ t_{j} \bigr\}_{j=1}^\infty$, depending on our choice of the loop $\gamma$ such that
\begin{enumerate}
    \item
    \label{item:item:sequence:ApproxUpper}
    $0 \le t_{j} / \xi_{j} - t_{j-1} \le A $
 and $ t_{j} / \xi_{j} \in \N$ for all $j \ge 2$.
 \item 
 \label{item:item:sequence exponentialgrowth}
 $t_{nm+1} \ge t_{1}\,w(\gamma)^{n}$ for all $n \ge 1$.
 \item 
 \label{item:item:sequenceexponentialbelow}Let $\epsilon$ be the governor of $g$ with respect to the chosen mapping tori (see Definition~\ref{defn:DilationSlopes}). There exists a positive constant $D$ depending only on $A$, $\epsilon$, and $t_1$ such that $t_{n} \le e^{Dn +D}$ for all $n \ge 1$.
\end{enumerate}
\end{lem}

\begin{proof}
We recall that the spirality of $\gamma$ is the number $w(\gamma) = \xi_{1}\xi_{2}\cdots \xi_{m}$ (see Definition~\ref{defn:DilationSlopes}). For each $j \in \{1,\dots, m\}$, let $p_j = [\overleftarrow{T'_{j}}: \overleftarrow{T_j}]$ and $q_{j} = [\overrightarrow{T'_{j}}: \overrightarrow{T_j}]$. Extend the sequence $p_1,p_2,\dots, p_m$ to a $m$--periodic sequence $\{p_j\}_{j=1}^{\infty}$ with $p_{j+m} = p_j$ for all $j \ge 1$, and similarly extend $q_1,\dots, q_m$ to an $m$--periodic sequence $\{q_j\}_{j=1}^{\infty}$. Let $A =  \max \bigset{1+ q_j}{j=1,2,\dots ,m}$.
Let $t(1) = q_{1}q_{2}\cdots q_{m}$, we construct an infinite sequence $\{t_j\}$ satisfying (\ref{item:item:sequence:ApproxUpper}). Suppose that $t_{j-1}$ has been defined for some $j \ge 2$, and we would like to define $t_j$. Since $1 +q_j \le A$, we have
\[
1 \le \frac{A-1}{q_j} = \frac{A +t_{j-1} -1 -t_{j-1}}{q_j} = \frac{A+t_{j-1}}{q_j} - \frac{1+t_{j-1}}{q_j}
\]
It follows that there exists $k_j \in \N$ such that
\begin{equation}
    \tag{$\clubsuit$}
    \label{eqn:double:inequality}
    \frac{1+t_{j-1}}{q_j} \le k_j \le \frac{A+t_{j-1}}{q_j}
\end{equation}
Let $t_j = k_{j}p_{j}$. It is obvious that $t_{j} \in \N$. We use the fact $\xi_{j} = p_{j}/q_{j}$ to see that $t_{j} / \xi_{j} = k_{j}p_{j} / \xi_{j} = k_{j}q_{j}$. Hence $t_{j} / \xi_{j} \in \N$.

From \eqref{eqn:double:inequality}, we have $1 +t_{j-1} \le k_{j}q_{j} \le A +t_{j-1}$. Hence $1 \le k_{j}q_{j} -t_{j-1} \le A $. Using $t_{j} / \xi_{j} = k_{j}q_{j}$, we immediately have $1 \le t_{j} /\xi_{j} -t_{j-1} \le A$, verifying (\ref{item:item:sequence:ApproxUpper}).

We next verify that the sequence $\{t_{j}\}$ in (\ref{item:item:sequence:ApproxUpper}) satisfies (\ref{item:item:sequence exponentialgrowth}). From (\ref{item:item:sequence:ApproxUpper}), we have $0 \le t_{j}/\xi_{j} - t_{j-1}$. Thus
\begin{equation}
    \tag{$\diamondsuit$}
\label{eqn:SequenceIncreases}
t_{j} \ge \xi_{j}\,t_{j-1} \qquad \text{for all $j>1$.}
\end{equation}
For any $j >m$, we apply \eqref{eqn:SequenceIncreases} iteratively $m$ times to get 
\begin{align*}
    t_{j} &\ge \xi_{j} \, t_{j-1}\\
    &\ge \xi_{j}\,\xi_{j-1}\,t_{j-2} \ge \cdots \\
    &\ge \Bigl (\xi_{j}\,\xi_{j-1} \cdots\, \xi_{j -m +1} \Bigr) t_{j-m}\\
    & = w(\gamma)t_{j-m}
\end{align*}
Further applying $t_{j} \ge w(\gamma)t_{j-m}$  iteratively $k$ times (and using that $w(\gamma) \ge 1$) gives $t_j \ge w(\gamma)^{n}t_{j-nm}$ for any $n \ge 1$. In particular, with $j = nm +1$ we have $t_{nm+1} \ge w(\gamma)^{n}t_{1}$ which confirms (\ref{item:item:sequence exponentialgrowth}).

In order to establish (\ref{item:item:sequenceexponentialbelow}), we recall that $w(\gamma) >1$. It follows that the spirality of $S$ is non-trivial. Let $\epsilon$ be the governor of $g$ with respect to the chosen mapping torus (see Definition~\ref{defn:DilationSlopes}). We note that $\epsilon$ is strictly greater than $1$ since the spirality of $S$ is non-trivial. We will show by induction on $j$ that 
\[
t_{j} \le (A+t_1)\sum_{i=1}^{j}\epsilon^{i}
\]
When $j=1$, it is obvious that $t_1 < (A+t_1)\epsilon$ (using $\epsilon >1$). For the inductive step, we use the right inequality (i.e, $t_{j}/\xi_{j} -t_{j-1} \le A$) in (\ref{item:item:sequence:ApproxUpper}), and the fact $\xi_{j} \le \epsilon$ to get that
\begin{align*}
    t_{j} &\le (A +t_{j-1})\,\xi_{j}\\
    &\le (A +t_{j-1})\epsilon = A\epsilon + t_{j-1}\epsilon\\
    &\le A\epsilon + (A+t_1) \Bigl (\sum_{i=1}^{j-1}\epsilon^{i} \Bigr )\epsilon = A\epsilon + (A+t_1)\sum_{i=1}^{j-1}\epsilon^{i+1}\\
    &< (A+t_1)\epsilon + (A+t_1)\sum_{i=1}^{j-1}\epsilon^{i+1}\\
    &=(A+ t_1) \Bigl (\epsilon + \sum_{i=1}^{j-1}\epsilon^{i+1} \Bigr ) = (A +t_1) \sum_{i=1}^{j}\epsilon^{i}
    \end{align*}
Since $\sum_{i=1}^{j}\epsilon^{i} < \epsilon^{j+1} / (\epsilon-1)$, we obtain $t_{j} < (A+t_1) \epsilon^{j+1} / (\epsilon-1)$. It follows that there is $D>0$ depending only on $A$, $\epsilon$ and $t_1$ such that $t_j \le e^{Dj +D}$ for all $j \ge 1$ (for example, we may choose $D = \ln(\epsilon) + \ln (A+t_1)\epsilon / (\epsilon -1)$).
\end{proof}

For the rest of this section, we fix the curve $\gamma$ satisfying the hypothesis of Lemma~\ref{lem:construct sequence}. The collection $\mathcal{T}_{g}$ subdivides $\gamma$ into a concatenation $\gamma_1 \cdots \gamma_m$ with the following properties. Each path $\gamma_{i}$ belongs to a piece $B_{i}$ of $S$, starting on a circle $c_{i} \in \mathcal{T}_{g}$ and ending on the circle $c_{i+1}$. The image $g(\gamma_{i})$ of this path in $N$ lies in a block $M_i$. The image of the circle $g(c_{i})$ in $N$ lies a JSJ torus $T_{i}$ obtained by gluing to a boundary torus $\overleftarrow{T_i}$ of $M_{i-1}$ to a boundary torus $\overrightarrow{T_i}$ of $M_{i}$.  We extend the sequence $\gamma_{1},\cdots,\gamma_{m}$ to a periodic sequence $\{\gamma_{j}\}_{j=1}^{\infty}$ with $\gamma_{j+m} = \gamma_{j}$ for all $j \ge 1$. We extend the sequence $c_1, \cdots, c_m$ to a periodic sequence $\{c_j\}_{j=1}^{\infty}$ with $c_{j+m} =c_{j}$ for all $j \ge 1$.  We also choose the basepoint $x_{0} \in S$ to be the initial point of $\gamma_1$. Let $\mathcal{Q}$ be the family of lines that are lifts of loops of $\mathcal{T}_{g}$.

\begin{cons}[Constructing a sequence of points in $\tilde{S}$]
\label{cons:sequence x}
We recall that $\tilde{g} \colon \tilde{S} \to \tilde{N}$ is an embedding. Let $\tilde{x}_{0} = \tilde{g}(\tilde{s}_{0})$.
For convenience, we relabel $\tilde{x}_0$ by $z_{1}$. Draw a line that passes through $z_{1}$ parallel to a lift of the degeneracy slope $\overrightarrow{\mathbf{s}_1} = \mathbf{s}_{c_{1}B_{1}}$ in $\tilde{M}_1 = \tilde{M}_{B_{1}}$. We denote the intersection of this line with the slice $\tilde{B}_1 \times \{t_{1}\}$ (of $\tilde{M}_1 = \tilde{M}_{B_{1}}$) by $y_{1}$. We denote the degeneracy slope $\mathbf{s}_{c_1B_m}$ by $\overleftarrow{\mathbf{s}_1}$. Let $x_1 = z_1$. We construct a sequence of triples $\{x_j,y_j,z_j\}$ inductively as the following.

Suppose that $y_{j-1}$, $x_{j-1}$, and $z_{j-1}$ have been defined. Let $\tilde{\gamma}_{j-1}$ be the lift of $\gamma_{j-1}$ in $\tilde{N}$ based at $y_{j-1}$. Let $y'_{j}$ be the terminal point of $\tilde{\gamma}_{j-1}$. Draw a line that passes through $y'_j$ parallel to a lift of the degeneracy slope $\overleftarrow{\mathbf s_{j}} = \mathbf{s}_{c_{j}B_{j-1}}$ in $\tilde{M}_{j-1} = \tilde{M}_{B_{j-1}}$. This line meets the slice $\tilde{B}_{j-1} \times \{0\} \subset \tilde{S}$ at a point denoted by $x_{j}$, and it meets the slice $\tilde{B}_{j-1} \times \{t_{j} / \xi_{j} \}$ in $\tilde{M}_{j-1} = \tilde{M}_{B_{j-1}}$ at a point denoted by $y_{j}$.  Draw a line that passes through $y_{j}$ parallel to a lift of the degeneracy slope $\overrightarrow{\mathbf s_{j}} = \mathbf{s}_{c_{j}B_{j}}$ in $\tilde{M}_{j} = \tilde{M}_{B_{j}}$ until it meets the slice $\tilde{B}_{j} \times \{0\} \subset \tilde{S}$ at a point denoted by $z_{j}$.
\end{cons}

\begin{figure}[h]
\centering
\def\svgwidth{\columnwidth}
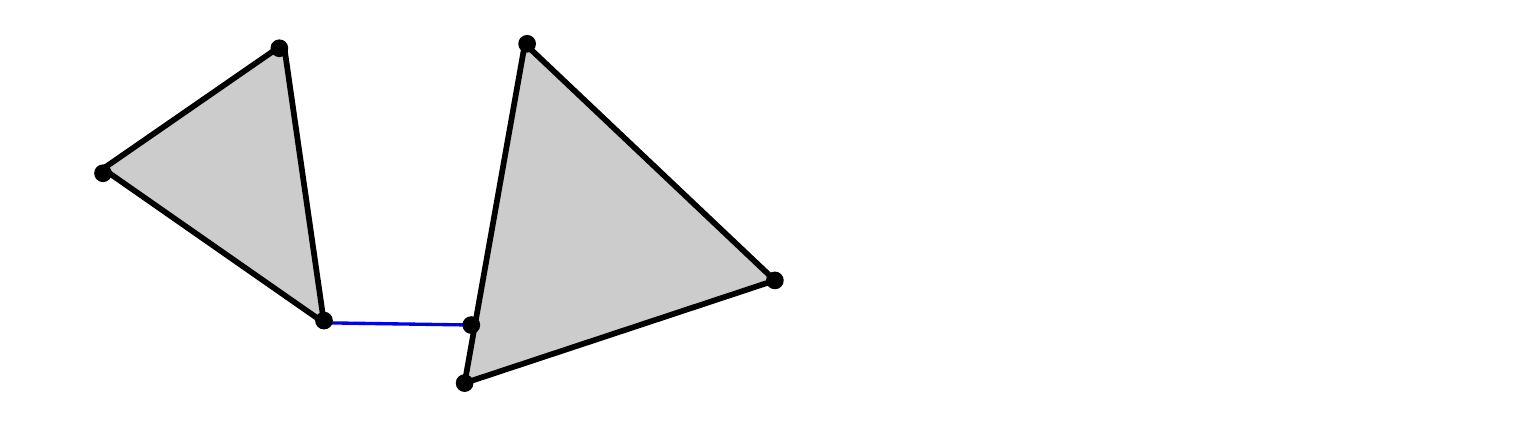
\caption{The picture illustrates the slices that $x_j$, $y_j$, $z_j$ belong to}
\label{figure:2}
\end{figure}

In the following, let $\{x_{j}\}$, $\{y_{j}\}$, and $\{z_{j}\}$ be the collections of points given by Construction~\ref{cons:sequence x}. We note that $z_{j} \in (\tilde{B}_{j-1} \times \{0\}) \cap (\tilde{B}_{j} \times \{0\})$.
We denote
$\overleftarrow{\lambda_{i}}$ the length of the image of $\overleftarrow{\mathbf s_i}$ in $N$, and $\overrightarrow{\lambda_{i}}$ the length of the image of $\overrightarrow{\mathbf s_i}$ in $N$ with $i=1,\dots,m$.
We extend the sequence $\overleftarrow{\lambda_{1}}, \dots, \overleftarrow{\lambda_{m}}$ to the $m$--periodic sequence $\{\overleftarrow{\lambda_{j}}\}_{j=1}^{\infty}$, and the sequence $\overrightarrow{\lambda_{1}}, \dots, \overrightarrow{\lambda_{m}}$ to the $m$--periodic sequence $\{\overrightarrow{\lambda_{j}}\}_{j=1}^{\infty}$.

\begin{rem}
\label{rem:distance}
From Construction~\ref{cons:sequence x}, it is possible that $x_{j} = z_{j}$. For each $j \ge 2$, since $y_{j}$ belongs to the slice $\tilde{B}_{j-1} \times \{t_j / \xi_j\}$ of $\tilde{M}_{j-1}$, we have
$d(y_j,x_j) = \frac{t_j}{\xi_j} \overleftarrow{\lambda_j}$.
Also $y_j$ belongs to the slice $\tilde{B}_{j} \times \{t_j\}$ of $\tilde{M}_{j}$ (see Lemma~\ref{lem: application rule of sines} for a similar argument), and thus
$d(y_j,z_j) = t_{j}\overrightarrow{\lambda_j}$.

Since $y'_{j} \in \tilde{B}_{j-1} \times \{t_{j-1} \}$, it follows that $d(x_j, y'_j) = t_{j-1}\overleftarrow{\lambda_j}$. Thus
$d(y_j,y'_{j}) = d(y_j,x_j) -d(x_j, y'_j)  = \frac{t_j}{\xi_j}\overleftarrow{\lambda_j} - t_{j-1}\overleftarrow{\lambda_j} = (\frac{t_j}{\xi_j} - t_{j-1})\overleftarrow{\lambda_j}$
\end{rem}

We use the following lemma in the proof of Lemma~\ref{lem:doubleexponentiallower} where we show that the double spiral path $\sigma_n$ in Definition~\ref{defn:doublespiralpath} is non-backtracking.
\begin{lem}
\label{lem:constantbacktracking}
Let $F$ be a connected compact hyperbolic surface with non-empty boundary. Equip $F$ with a hyperbolic metric and lift this metric to the universal cover $\tilde{F}$, and  denote it by $d$. There exists a constant $k >0$ such that the following holds:

Let $\ell$     and $\ell'$ be two distinct boundary lines in $\tilde{F}$. Let $[p,p']$ be a geodesic of shortest length from $\ell$ to $\ell'$. Let $c_1$ be the boundary circle in $F$ such that $\ell'$ covers $c_1$. Let $z$ be a point in $\ell$, $\beta$ be the geodesic in $\tilde{F}$ connecting $z$ to $p'$, and $\tilde{c}_1^{k}$ be the path lift of $c_{1}^{k}$ based at $p'$. Let  $\beta^{-1}$ be the path lift of the image of the inverse path $\bar{\beta}$ (of $\beta$) based at $\tilde{c}_1^{k}(1)$. Then the terminal point of the path $\beta \cdot \tilde{c}_{1}^{k} \cdot \beta^{-1}$ does not lie in the boundary line $\ell$.
\end{lem}
\begin{proof}
We note that $(\tilde{F},d)$ is bilipschitz homeomorphic to a fattened tree (see the paragraph after Lemma~1.1 \cite{NeuBeh08}). Thus, there exists $\epsilon >0$ such that the following holds: Let $\ell_0$   and $\ell_1$ be two distinct boundary lines in $\tilde{F}$. Let $[p_0,p_1]$ be a geodesic of shortest length from $\ell_0$ to $\ell_1$. Then every geodesic from $\ell_0$ to $\ell_1$ must come within a distance $\epsilon$ of both $p_0$ and $p_1$.
Let $\delta$ be the minimum of values $\abs{c}$ with $c$ varying over all boundary circles of $F$. Let $k$ be a sufficiently large integer such that $k \delta > 2 \epsilon$.

Suppose by way of contradiction that the terminal point of  $\beta \cdot \tilde{c}_{1}^{k} \cdot \beta^{-1}$ lies in $\ell$. Let $h = [c_{1}^k] \in \pi_1(F,*)$ where $* \in c_1$ is the image of $p'$ under the covering map. We note that $\beta^{-1} = h(\bar{\beta})$, $h(p') \in \ell'$, and $p,z \in \ell$.
Since we assume that $\beta^{-1}(1) \in \ell$, it follows that $h(z) = \beta^{-1}(1) \in \ell$. Since $z, p \in \ell$, it follows that $h(p) \in \ell$. Let $[h(p), h(p')]$ be the geodesic in $\tilde{F}$ connecting $h(p) \in \ell$ to $h(p') \in \ell'$.  According to the previous paragraph, it follows that there exists $a \in [h(p), h(p')]$ such that $d(p', a) \le \epsilon$. Since the concatenation $[h(p),a] \cdot [a,p']$ is a path from $\ell$ to $\ell'$ and $[p,p']$ is a geodesic of shortest length from $\ell$ to $\ell'$, it follows that $d(p,p') = \bigabs{[p,p']} \le \bigabs{[h(p),a] \cdot [a,p']} = d(h(p),a) + d(a,p') \le d(h(p),a) + \epsilon$. 

Using the fact $d(h(p), h(p')) = d(p,p')$, $a \in [h(p),h(p')]$, and the inequality 
$d(p,p') \le d(h(p),a) + \epsilon$, we have 
\[
d(a, h(p')) = d(h(p),h(p')) - d(a, h(p)) = d(p,p') -d(a, h(p)) \le \epsilon
\]
It follows that
\[
k\delta \le  k\abs{c_1} = \abs{\tilde{c}_{1}^{k}} = d(p', h(p')) \le d(p',a) + d(a, h(p')) \le \epsilon + d(a, h(p')) \le 2\epsilon
\]
This contradicts to our choice of $k$ that $k\delta > 2\epsilon$.
\end{proof}
We recall that $\mathcal{Q}$ is the family of lines in $\tilde{S}$ that are lifts of curves of $\mathcal{T}_g$. We also recall that both $z_j$ and $z_{j+1}$ belong $\tilde{B}_j$. For each $j$, let $\ell_j$ be the line in $\mathcal{Q}$ that passes through $z_j$. Let $\alpha_j$ be a shortest path in $\tilde{B}_j$ connecting $\ell_j$ to $\ell_{j+1}$ of $\tilde{B}_j$.
For each piece $B$ of $S$, let $k_B$ be the constant given by Lemma~\ref{lem:constantbacktracking}. Let $k_0$ be the maximum of $k_B$ where $B$ runs over pieces of $S$. 

\begin{defn}[Double spiral path]
\label{defn:doublespiralpath}
Let $[z_j, z_{j+1}]$ be a geodesic in $\tilde{S}$ connecting $z_j$ to $z_{j+1}$, and $[z_j, \alpha_{j}(1)]$ a geodesic in $\tilde{S}$ connecting $z_j$ to $\alpha_{j}(1)$. For each $n \ge 1$, let $\tau_{n}$ be the concatenation of the geodesics 
\[
[z_{1},z_{2}],[z_{2},z_{3}],\dots,[z_{nm-1},z_{nm}],[z_{nm},\alpha_{nm}(1)]
\]
Let $\widetilde{c}_{1}^{k_0}$ be the path lift of $c_{1}^{k_0}$ based at $\alpha_{nm}(1)$. We define \emph{double spiral path} $\sigma_n$ of $\tau_n$ as
\[
\sigma_{n} = \tau_{n}\cdot \widetilde{c}_{1}^{k_0} \cdot \tau^{-1}_{n}
\]
\end{defn}


\begin{lem}
\label{lem:lowerdouble}
Suppose that $M_1$ is a hyperbolic block of $N$. Then there exists an integer $n_1$, a function $F(n)$ such that $F(n) \sim e^{e^{n}}$, and $F(n) \le \abs{\alpha_{nm+1}}_{\tilde{S}}$ for all $n \ge n_1$.
\end{lem}
\begin{proof}
We recall that $M_{B_1}$ is the mapping torus of a pseudo-Anosov $\varphi \colon B_1 \to B_1$. By our assumption, we note that $M_{nm+1} =M_1$ and $M_1$ is a hyperbolic block of $N$.
Applying Lemma~\ref{lem:pseudo} to the pseudo-Anosov $\varphi$ and the curve $\gamma_1$, there exists a sufficiently large integer $n_0$ such that the following holds: For any $j \ge n_0$, let $u_j$ and $v_j$ be the two endpoints of a path lift of $\varphi^{j}(\gamma_1)$ in the universal cover $\tilde{B}_1$. Let $\alpha$ be a shortest path in $\tilde{B}_1$ from a boundary line of $\partial \tilde{B}_1$ that contains $u_j$ to a boundary line of $\partial \tilde{B}_1$ that contains $v_j$. Then
\begin{equation}
\tag{$*$}
\label{eq:star}
e^{Lj}/C \le d_{\tilde{S}} \bigl (u_j,v_j \bigr ), \,  \frac{\ln d_{\tilde{S}} \bigl (u_j,\alpha(0) \bigr )}{n} \le \frac{1}{2},\, \textup{and}\,\,\, \frac{\ln d_{\tilde{S}} \bigl (v_{j},\alpha(1) \bigr )}{n} \le \frac{1}{2}
\end{equation}
By (\ref{item:item:sequence exponentialgrowth}) in Lemma~\ref{lem:construct sequence}, we have
$ \bigl ( w(\gamma) \bigr )^{n}t_{1} \le t_{nm+1}$. Hence, there exists an integer $n_{1}$ which is greater than $n_0$ and satisfies that $n_0 \le \bigl ( w(\gamma) \bigr )^{n}t_{1} \le t_{nm+1}$ for all $n \ge n_1$. 
Let $F$ be a function on $n$ defined by $F(n) = e^{Lt_{1}w(\gamma)^{n}}/  C - 2e^{n/2}$. We note that $w(\gamma) >1$, thus $F(n) \sim e^{e^n}$ (we recall that if $a, b>1$ then $a^{b^{n}} \sim e^{e^{n}}$).
To prove the lemma, we first prove the following claim.

{\bf{Claim:}} $F(n) \le d_{\tilde{S}} \bigl (z_{nm+1},\alpha_{nm+1}(1) \bigr ) -e^{n/2}$ for all $n \ge n_1$.  

We recall that $\alpha_{nm+1}$ is a shortest path connecting $\ell_{nm+1}$ to $\ell_{nm+2}$. Let $\widetilde{\varphi^{t_{nm+1}}(\gamma_1)}$ be the path lift of $\varphi^{t_{nm+1}}(\gamma_1)$ based at $z_{nm+1}$. We denote the initial point and terminal point of $\widetilde{\varphi^{t_{nm+1}}(\gamma_1)}$ by $u_{t_{nm+1}}$ and $u_{t_{nm+1}}$ respectively. We have that $v_{nm+1} \in \ell_{nm+2}$ (we refer the reader to Lemma~\ref{lem:refereeask} for an explanation of this fact).  Using (\ref{eq:star}), the fact $z_{nm+1} =u_{t_{nm+1}}$, and the triangle inequality, we have
\begin{align*}
    F(n) &= e^{Lt_{1}w(\gamma)^{n}}/  C - 2e^{n/2} \le e^{Lt_{nm+1}}/C -2e^{n/2}\\
    &\le d_{\tilde{S}}(u_{t_{nm+1}}, v_{t_{nm+1}}) -2e^{n/2}  \,\,\, \textup{(using the first inequality of (\ref{eq:star}))}\\
    &= d_{\tilde{S}}(z_{nm+1}, v_{t_{nm+1}}) -2e^{n/2}\\
    & \le d_{\tilde{S}}(z_{nm+1}, \alpha_{nm+1}(1)) + d_{\tilde{S}}(\alpha_{nm+1}(1), v_{t_{nm+1}}) -2e^{n/2}\\
    &\le d_{\tilde{S}}(z_{nm+1}, \alpha_{nm+1}(1)) + e^{n/2} -2e^{n/2} \,\,\, \textup{(using the third inequality of (\ref{eq:star}))}\\
    &= d_{\tilde{S}}(z_{nm+1}, \alpha_{nm+1}(1)) -e^{n/2} = d_{\tilde{S}}(u_{t_{nm+1}}, \alpha_{nm+1}(1)) -e^{n/2}\\
    &\le d_{\tilde{S}}(u_{t_{nm+1}}, \alpha_{nm+1}(0)) + d_{\tilde{S}}(\alpha_{nm+1}(0), \alpha_{nm+1}(1)) - e^{n/2}\\
    &\le e^{n/2} + d_{\tilde{S}}(\alpha_{nm+1}(0), \alpha_{nm+1}(1)) - e^{n/2} \,\,\, \textup{(using the second inequality of (\ref{eq:star}))}\\
    &= d_{\tilde{S}}(\alpha_{nm+1}(0), \alpha_{nm+1}(1)) = \abs{\alpha_{nm+1}}_{\tilde{S}}
\end{align*}
which is confirming the lemma.
\end{proof}

In the proof of Lemma~\ref{lem:lowerdouble}, we claim that the terminal point of $\widetilde{\varphi^{t_{nm+1}}(\gamma_1)}$ lies in $\ell_{nm+2}$ without an explanation. We provide a proof for this claim in the following lemma.
\begin{lem}
\label{lem:refereeask}
Let $\widetilde{\varphi^{t_{nm+1}}(\gamma_1)}$ be the path lift of $\varphi^{t_{nm+1}}(\gamma_1)$ based at $z_{nm+1}$. Then the terminal point of $\widetilde{\varphi^{t_{nm+1}}(\gamma_1)}$ lies in $\ell_{nm+2}$ 
\end{lem}
\begin{proof}
In the mapping torus $M_{B_1}$, we note that $\mathbf{s}_{c_1 B_1} \cdot \varphi^{j}(\gamma_1) \cdot \mathbf{s}^{-1}_{c_2 B_1} \simeq \varphi^{j+1}(\gamma_1)$. We will show by induction  on $k \ge 1$ that  $\mathbf{s}^{k}_{c_1 B_1} \cdot \gamma_1 \cdot \mathbf{s}^{-k}_{c_2 B_1} \simeq \varphi^{k}(\gamma_1)$. The base case $k =1$ is obvious (we let $j =0$). For inductive step, assume it is true for $k =n-1$. To prove for $k =n$, we let $j =n-1$ to see that
$\mathbf{s}_{c_1 B_1}^{n} \cdot \gamma_1 \cdot \mathbf{s}_{c_1 B_1}^{-n} \simeq \mathbf{s}_{c_1 B_1} \cdot \bigl ( \mathbf{s}_{c_1 B_1}^{n-1} \cdot \gamma_1 \cdot \mathbf{s}_{c_2 B_1}^{1-n} \bigr ) \cdot \mathbf{s}^{-1}_{c_2 B_1} \simeq \mathbf{s}_{c_1 B_1} \cdot \varphi^{n-1}(\gamma_1) \cdot \mathbf{s}^{-1}_{c_2 B_1} \simeq \varphi^{n}(\gamma_1)$. The claim is confirmed. 

Let $k = t_{nm+1}$. By lifting property, the terminal point of $\widetilde{\varphi^{t_{nm+1}}(\gamma_1)}$ is the terminal point of the path lift of $\mathbf{s}^{t_{nm+1}}_{c_1 B_1} \cdot \gamma_1 \cdot \mathbf{s}^{-t_{nm+1}}_{c_2 B_1}$ based at $z_{nm+1} = \widetilde{\varphi^{t_{nm+1}}(\gamma_1)}(0)$. Therefore, the terminal point of $\widetilde{\varphi^{t_{nm+1}}(\gamma_1)}$ lies in $\ell_{nm+2}$.
\end{proof}
\begin{lem}
\label{lem:doubleexponentiallower}
Suppose that $M_{1}$ is a hyperbolic block of $N$.
Then $e^{e^{n}}$ is dominated by $d_{\tilde{S}}(z_1, \sigma_{n}(1)$.
\end{lem}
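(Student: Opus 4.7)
The plan is to observe that the path $\sigma_n = \tau_n \cdot \tilde{c}_1 \cdot \tau_n^{-1}$ in $\tilde{S}$ has length
\[
|\sigma_n|_{\tilde{S}} = 2|\tau_n|_{\tilde{S}} + |\tilde{c}_1|_{\tilde{S}},
\]
and that $\tau_n$ is by definition a concatenation of geodesics in $\tilde{S}$, so its length equals the sum of the $d_{\tilde{S}}$-distances between consecutive endpoints of these geodesics. Consequently $|\tau_n|_{\tilde{S}}$ is bounded below by the length of any single summand, and it suffices to exhibit one segment of $\tau_n$ whose $d_{\tilde S}$-length is already doubly exponential in $n$.

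For this I would apply Lemma~\ref{lem:lowerdouble} directly. Since $M_1$ is hyperbolic and the sequence of blocks that $\gamma$ crosses is $m$-periodic, the block $M_{km+1}$ equals $M_1$ and hence is hyperbolic for every $k \ge 0$; therefore Lemma~\ref{lem:lowerdouble} applies at every such index $k$ and yields
\[
d_{\tilde{S}}(z_{km+1}, z_{km+2}) \ge E(k) \sim e^{e^{k}}.
\]
Assuming $m \ge 2$ and $n$ large, the segment $[z_{(n-1)m+1}, z_{(n-1)m+2}]$ is one of the geodesic subsegments of $\tau_n$ (since $(n-1)m + 2 \le nm$), and so
\[
|\sigma_n|_{\tilde{S}} \ge 2\, d_{\tilde{S}}\bigl(z_{(n-1)m+1}, z_{(n-1)m+2}\bigr) \ge 2 E(n-1).
\]
A constant shift in the argument is absorbed by Definition~\ref{def:equivalentfunction} (take $B=1$, $C=1$), so $e^{e^{n-1}} \sim e^{e^n}$, and $e^{e^n} \preceq |\sigma_n|_{\tilde{S}}$ as claimed.

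For the edge case $m = 1$ the argument is the same after a further shift: $\tau_n$ contains $[z_{n-1}, z_n] = [z_{(n-2)+1}, z_{(n-2)+2}]$ for $n \ge 3$, and Lemma~\ref{lem:lowerdouble} applied with $k = n-2$ gives $|\sigma_n|_{\tilde{S}} \ge E(n-2) \sim e^{e^n}$. The main obstacle is really only bookkeeping: one must check the index arithmetic so that an appropriate subsegment genuinely sits inside $\tau_n$ and so that its corresponding block coincides with the hyperbolic block $M_1$ under the $m$-periodicity. Once this is verified, Lemma~\ref{lem:lowerdouble} does all the real work and the conclusion follows immediately.
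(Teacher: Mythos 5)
You have proved the lemma's literal wording but not what the lemma is actually needed for, and this is a genuine gap rather than a stylistic choice. The statement does say ``the length of $\sigma_n$'', and the path-length $|\sigma_n|_{\tilde{S}}$ is indeed bounded below by a doubly exponential function via Lemma~\ref{lem:lowerdouble}, exactly as you argue. But the paragraph immediately preceding the lemma states the real goal explicitly: ``Our goal is to show $d_{\tilde{S}}(z_1,\sigma_n(1))$ is bounded from below by a double exponential function (see Lemma~\ref{lem:doubleexponentiallower})'', and the paper's own proof estimates the geodesic $[z_1,\sigma_n(1)]$, not the path $\sigma_n$. The distortion of $\pi_1(S)$ in $\pi_1(N)$ is recovered (via Corollary~\ref{cor:GeometricDistortion}) from $d_{\tilde{S}}(z_1,h(z_1))$ for the group element $h$ with $h(z_1)=\sigma_n(1)$; the length of one particular path representing $h$ says nothing, since a far shorter geodesic could a priori exist.

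This is not a pedantic distinction here, because $\sigma_n=\tau_n\cdot\tilde{c}_1\cdot\tau_n^{-1}$ doubles back on itself by design and is far from a quasi-geodesic; the entire purpose of the wrap around $\tilde{c}_1$, as explained just before the lemma, is to drag the endpoint back so that $d(z_1,\sigma_n(1))$ is linear in $n$, which makes it all the more urgent to rule out that $d_{\tilde{S}}(z_1,\sigma_n(1))$ has collapsed as well. The missing idea is a Bass--Serre tree argument: the deck transformation realized by $\tilde{c}_1$ stabilizes the line $\ell_{nm+1}$ and the adjacent piece $\tilde{B}_{nm}$ but moves the far boundary line $\ell_{nm}$ of that piece, so the return branch traced by $\tau_n^{-1}$ splits from the outgoing branch only at the vertex $[\tilde{B}_{nm}]$ of $\mathbf{T}_S$. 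Consequently any $\tilde{S}$-geodesic from $z_1$ to $\sigma_n(1)$ must cross every one of $\tilde{B}_1,\dots,\tilde{B}_{nm}$, in particular $\tilde{B}_{(n-1)m+1}$, and by Lemma~\ref{lem:shortestlowerbound} the shortest crossing of that piece between its two relevant boundary lines is already $\succeq e^{e^n}$. The fact that your argument never invokes Lemma~\ref{lem:shortestlowerbound}, the lemma placed directly before this one for exactly this purpose, is the tell-tale sign that the real content has been bypassed.
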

\begin{proof}
Let $F$ be the function given by Lemma~\ref{lem:lowerdouble}. We recall that $F(n) \sim e^{e^n}$. Let $n_1$ be the constant given by Lemma~\ref{lem:lowerdouble}. For each $n \ge n_1 +1$, let $\bigl [z_{1},\sigma_{n}(1) \bigr ]$ be the geodesic in $\tilde{S}$ connecting $z_{1}$ to $\sigma_{n}(1)$. Let $\mathbf{T}_S$ be the tree given by Definition~\ref{defn:map of trees}. The subpath $[z_{j},z_{j+1}]$ (with $j = 1, \cdots nm-1$), the subpath $[z_{nm}, \alpha_{nm}(1)] \cdot \tilde{c}_{1}^{k_0} \cdot [z_{nm}, \alpha_{nm}(1)]^{-1}$, and the subpath $[z_{j},z_{j+1}]^{-1}$ (with $j = 1, \cdots nm-1$) of $\sigma_n$ belong to pieces of $\tilde{S}$. These pieces  correspond to vertices of $\mathbf{T}_S$. By our construction of $\sigma_n$ and by Lemma~\ref{lem:constantbacktracking}, these vertices are distinct. It follows that the geodesic $[z_1,\sigma_{n}(1)]$ must come through all these pieces. Thus, $\bigl [z_{1},\sigma_{n}(1) \bigr ]$ must come through the piece $\tilde{B}_{(n-1)m+1}$, and $\bigl [z_{1},\sigma_{n}(1) \bigr ]$ enters $\tilde{B}_{(n-1)m+1}$ at $\ell_{(n-1)m}$ and leaves $\tilde{B}_{(n-1)m+1}$ at $\ell_{(n-1)m+1}$. Using the fact that $\alpha_{(n-1)m+1}$ is a shortest path from $\ell_{(n-1)m}$ to $\ell_{(n-1)m+1}$. It follows that $\bigl |\alpha_{(n-1)m+1} \bigr |_{\tilde{S}} \le d_{\tilde{S}}(z_1, \sigma_{n}(1))$. As $F(n) \sim e^{e^{n}}$, it implies that $F(n-1) \sim e^{e^{n}}$. Using Lemma~\ref{lem:lowerdouble}, we have $F(n-1) \le \bigl |\alpha_{(n-1)m+1} \bigr |_{\tilde{S}} \le d_{\tilde{S}}(z_1, \sigma_{n}(1))$. The lemma is verified.
\end{proof}
The proof of the following lemma is similar to the proof of Lemma~5.4 in \cite{Hruska-Nguyen}.
\begin{lem}
\label{lem:linearbound}
The distance in $\tilde{N}$ between the endpoints of $\sigma_{n}$ is bounded above by a linear function of $n$.
\end{lem}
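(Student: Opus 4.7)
The plan is to reinterpret $\sigma_n$ as a lift in $\tilde{N}$ of a loop $\bar{\sigma}_n$ in $N$ based at $s_0$, so that $\sigma_n(1) = [\bar{\sigma}_n]\cdot z_1$. Since $s_0\in c_1$ (the basepoint is the initial point of $\gamma_1$) and the endpoint of $\bar{\tau}_n$ also lies on $c_1$, the concatenation $\bar{\sigma}_n = \bar{\tau}_n\cdot c_1\cdot\bar{\tau}_n^{-1}$ is a genuine loop at $s_0$. I aim to produce a \emph{short} loop $\delta_n$ at $s_0$ representing the same element of $\pi_1(N,s_0)$ as $\bar{\sigma}_n$. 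Then lifting $\delta_n$ to $\tilde{N}$ starting at $z_1$ produces a path of length $|\delta_n|_N$ ending at $[\delta_n]\cdot z_1 = \sigma_n(1)$, yielding $d(z_1,\sigma_n(1))\le|\delta_n|_N$. The natural candidate is $\delta_n := \gamma^n\cdot c_1\cdot\gamma^{-n}$, a loop at $s_0$ of length $2n|\gamma|_N+|c_1|_N$, which is linear in $n$.

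To establish $[\bar{\sigma}_n] = [\delta_n]$, set $g_\gamma := [\gamma]\in\pi_1(N,s_0)$. The lift of $\gamma^n$ to $\tilde{N}$ starting at $z_1$ ends at $g_\gamma^n\cdot z_1$, while $\tau_n$ ends at $x_{nm+1}$. By the injectivity of the tree map $\zeta$ (Proposition~\ref{prop:TreeBijective}), both paths traverse the same sequence of blocks $\tilde{M}_1,\dots,\tilde{M}_{nm}$ and terminate on the same JSJ plane $\tilde{T}_{nm+1}$, landing on the same boundary line of $\tilde{B}_{nm}$ (which is a lift of $c_1 = c_{nm+1}$ inside $\tilde{T}_{nm+1}$). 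Choose a path $\eta$ inside $c_1\subset T_1$ from the projection of $x_{nm+1}$ back to $s_0$. Then $\bar{\tau}_n\cdot\eta$ is a loop at $s_0$, and the endpoints of its lift and of the lift of $\gamma^n$ are both lifts of $s_0$ lying in $\tilde{T}_{nm+1}$; hence they differ by an element of $\Stab(\tilde{T}_{nm+1}) = g_\gamma^n\,\pi_1(T_1,s_0)\,g_\gamma^{-n}$, which gives $[\bar{\tau}_n\cdot\eta] = g_\gamma^n\cdot e$ for some $e\in\pi_1(T_1,s_0)$.

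Writing $\bar{\sigma}_n = (\bar{\tau}_n\cdot\eta)\cdot(\eta^{-1}c_1\eta)\cdot(\bar{\tau}_n\cdot\eta)^{-1}$ and substituting, we obtain
\[
[\bar{\sigma}_n] \;=\; g_\gamma^n\cdot e\cdot[\eta^{-1}c_1\eta]\cdot e^{-1}\cdot g_\gamma^{-n}
\]
in $\pi_1(N,s_0)$. Since $\eta$ was chosen inside $c_1$, the class $[\eta^{-1}c_1\eta]$ coincides with $[c_1]_{s_0}$, and both $e$ and $[c_1]_{s_0}$ lie in the abelian peripheral subgroup $\pi_1(T_1,s_0)\cong\Z^2$, so they commute. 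Thus $[\bar{\sigma}_n] = g_\gamma^n\cdot[c_1]_{s_0}\cdot g_\gamma^{-n} = [\delta_n]$, and the linear bound follows. The main obstacle is establishing $[\bar{\tau}_n\cdot\eta] = g_\gamma^n\cdot e$; this requires Proposition~\ref{prop:TreeBijective} together with the observation that distinct JSJ planes in $\tilde{N}$ are disjoint, so any deck transformation sending one point of a plane to another point of the same plane must stabilize the whole plane. Once this identification is secured, the abelianness of $\pi_1(T_1)$ absorbs the peripheral correction $e$ under the conjugation by $c_1$.
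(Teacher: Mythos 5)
The heart of your argument is the claim that the lift of $\gamma^n$ from $z_1$ and the path $\tau_n$ terminate on the same JSJ plane $\tilde{T}_{nm+1}$ (and more specifically, traverse the same sequence of blocks). This is where the proof breaks down, and the reason is exactly the phenomenon that distinguishes the mixed‐manifold case from the graph‐manifold case treated in \cite{Hruska-Nguyen}.

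Trace what Construction~\ref{cons:sequence x} actually does at one crossing. From $z_1$ one flows inside $\tilde{T}_1$ along the direction of the degeneracy slope $\mathbf{s}_{c_1 B_1}$ until reaching the slice $\tilde{B}_1\times\{t_1\}$; this flow is realized by the deck element $\delta_1^{t_1}$ where $\delta_1\in\pi_1(T_1)$ is the class of the degeneracy slope, so $y_1 = \delta_1^{t_1}\cdot z_1$. Consequently the lift $\tilde{\gamma}_1$ of $\gamma_1$ based at $y_1$ is the $\delta_1^{t_1}$--translate of the lift $\tilde{\gamma}_1'$ of $\gamma_1$ based at $z_1$, and its endpoint $y_2'$ lies on the plane $\delta_1^{t_1}\cdot P$, where $P$ is the plane containing the endpoint of $\tilde{\gamma}_1'$. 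In coordinates $\tilde{M}_1 = \tilde{B}_1\times\R$, the boundary line of $\tilde{B}_1$ under $y_2'$ is $\tilde{\varphi}^{t_1}(\tilde{c}'')$, where $\tilde{c}''$ is the boundary line supporting the endpoint of $\tilde{\gamma}_1'$ and $\tilde{\varphi}$ is the chosen lift of the monodromy $\varphi$. Since $\delta_1^{t_1}\notin\pi_1(S)$ (indeed $\pi_1(S)\cap\pi_1(T_1)=\langle[c_1]\rangle$), the line $\tilde{T}_2\cap\tilde{S}$ in the construction is a genuinely different boundary line of $\tilde{B}_1$ from the one the lift of $\gamma_1$ from $z_1$ lands on --- except in the degenerate case $\tilde{\varphi}=\mathrm{id}$. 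When $B_1$ is a horizontal piece in a Seifert block one can take $\varphi=\mathrm{id}$, so $\tilde{\varphi}=\mathrm{id}$ and your argument goes through; but Proposition~\ref{prop:lower} is only needed when $\gamma$ passes through a geometrically infinite piece, where $\varphi$ is pseudo-Anosov and $\tilde{\varphi}^{t_1}$ moves $\tilde{c}''$ far away. So the two paths diverge into different branches of $\mathbf{T}_N$ after the very first crossing, and Proposition~\ref{prop:TreeBijective} does nothing to repair this: injectivity of $\zeta$ tells you the map is injective, not that two different paths in $\mathbf{T}_S$ starting at the same edge and crossing the same number of edges land on the same terminal edge.

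This is precisely why the paper does \emph{not} identify the endpoint of $\sigma_n$ with a short element of $\pi_1(N)$; it has no such identification. Instead it estimates directly: each step $y_j\to y_{j+1}$ has length bounded by a constant $\bar{A}$ (length of $\gamma_j$ plus the bounded number $\le A$ of periods of the degeneracy slope one flows, coming from Lemma~\ref{lem:construct sequence}), and the two barred/unbarred sequences are joined by $\tilde{c}_1$ of bounded length. The linear bound then falls out of the triangle inequality. If you want a group-theoretic rephrasing of the endpoint, it would have to be in terms of the actual word traced in $\pi_1(N)$ by the flow-and-lift construction, which is a product of degeneracy-slope powers and $\gamma_j$'s --- not $\gamma^n c_1\gamma^{-n}$ --- and bounding that word's length is the same work as the paper's direct estimate.
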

\begin{proof}
We recall that $\tau_{n}$ is the concatenation of geodesics 
\[
[z_{1},z_{2}],[z_{2},z_{3}],\dots,[z_{nm-1},z_{nm}],[z_{nm},\alpha_{nm+1}(1)]
\]
Note that in the Construction~\ref{cons:sequence x}, we also produce points $y_1,\dots,y_{nm+1}$ in $\tilde{N}$. Similarly, we also have points $\bar{y}_{nm+1},\dots,\bar{y}_{1}$ associating to $\tau^{-1}_{n}$.
Our purpose is to show the distance in $(\tilde{N},d)$ between the endpoints of $\sigma_{n}$ is bounded above by a linear function of $n$.
By the triangle inequality it suffices to produce an upper bound for the distance between successive points of the linear sequence 
\[
y_{1},y_{2},\dots,y_{nm+1}, \bar{y}_{nm+1},\dots, \bar{y}_{1}
\]
Let $A$ is the constant given by Lemma~\ref{lem:construct sequence}.
Let $\bar{A}$ be the maximum of all possible numbers
$\max \bigl \{\bigabs{\gamma_{j}} + A\overleftarrow{\lambda_{j+1}} +k_{0} |c_{j}| + t_{1}\overrightarrow{\lambda_{1}} \bigr \}$.
By Remark~\ref{rem:distance} and Lemma~\ref{lem:construct sequence} we have 
$d(y'_{j},y_{j}) = \bigl (t_{j} / \xi_{j} - t_{j-1} \bigr )\overleftarrow{\lambda_{j}} \le A\overleftarrow{\lambda_{j}}$.
Using the triangle inequality, we have
$d(y_{j},y_{j+1}) \le d(y_{j},y'_{j+1}) + d(y'_{j+1},y_{j+1}) \le \bigabs{\gamma_{j}} + d(y'_{j+1},y_{j+1}) \le \bigabs{\gamma_{j}} + A\overleftarrow{\lambda_{j+1}} \le \bar{A}$
for all $j \ge 0$. Therefore $d(y_{1},y_{mn+1}) \le \bar{A}mn$.
Similarly, we have $d(\bar{y}_{1},\bar{y}_{mn+1}) \le \bar{A}mn$. We note that two points $y_{nm+1}$ and $\bar{y}_{nm+1}$ belong to the same plane $\tilde{T}_{nm+1}$ and 
$d(y_{nm+1},\bar{y}_{nm+1}) \le k_{0}|c_{1}| \le \bar{A}$.
Thus, 
$d(y_{1},\bar{y}_{1}) \le d(y_1, y_{nm+1}) + d(y_{nm+1},\bar{y}_{nm+1}) + d(\bar{y}_{nm+1},\bar{y}_1) \le 2\bar{A}mn + \bar{A}$.
Since $d(z_{1},y_{1}) = t_{1}\overrightarrow{\lambda_{1}} \le \bar{A}$ and $d(\bar{z}_{1},\bar{y}_{1}) = t_{1}\overrightarrow{\lambda_{1}} \le \bar{A}$, it follows that $d(\sigma_{n}(0), \sigma_{n}(1))= d(z_1, \bar{z}_1) \le d(z_1,y_1) + d(y_1,\bar{y}_1) + d(\bar{y}_1, \bar{z}_1) \le 2\bar{A}nm +3\bar{A}$.
\end{proof}

\begin{proof}[Proof of Proposition~\ref{prop:lower}]
If there is a closed curve $\gamma$ satisfying the hypothesis of Lemma~\ref{lem:construct sequence} and passing through a hyperbolic block, then Proposition~\ref{prop:lower} is confirmed by a combination of the previous lemmas in this section. What remains to be shown is that the existence of the curve $\gamma$.

Since the spirality of $S$ is non-trivial, we can choose a closed curve $\alpha$ in $S$ with nonempty intersection with $\mathcal{T}_{g}$ such that $w(\alpha) >1$.
If the curve $\alpha$ already passes through a geometrically infinite piece, then we let $\gamma = \alpha$.
If not, we need to extend the curve $\alpha$ to a new closed curve $\gamma$ so that $w(\gamma) = w(\alpha)$ and $\gamma$ has nonempty intersection with a curve in $\mathcal{T}_{g}$ that is a boundary component of a piece of $S$, and this piece  is mapped into a hyperbolic block of $N$. We describe below how we find such a curve $\gamma$.

The collection $\mathcal{T}_{g}$ subdivides $\alpha$ into a concatenation $\alpha_{1}\cdots \alpha_{n}$ such that each $\alpha_{i}$ belongs to a piece $B_{i}$ of $S$, starting on a circle $c_{i} \in \mathcal{T}_{g}$ and ending on the circle $c_{i+1}$. Note that by our assumption above, each piece $B_i$ is horizontal surface.
We recall that $\Gamma(\mathcal{T}_{g})$ is the graph dual to the collection $\mathcal{T}_{g}$ on $S$. Let $v_{i}$ be the vertex in $\Gamma(\mathcal{T}_{g})$ associated to the piece $B_{i}$.
The closed curve $\alpha$ determines the closed cycle $e_{1}\cdots e_{n}$ in $\Gamma(\mathcal{T}_{g})$ where the initial vertex and terminal vertex of the edge $e_{i}$ are $v_{i}$ and $v_{i+1}$ respectively (with a convention that $v_{n+1} =v_{1})$. 

Since $S$ contains a geometrically infinite piece, let $u$ be the vertex in $\Gamma(\mathcal{T}_{g})$ associated to this piece. It follows that $u \neq v_{i}$ for any $i =1,\dots,n$. There exists $j \in \{1, \dots, n\}$ such that the following holds: there exists a $\beta$ in $\Gamma(\mathcal{T}_{g})$ with no self intersection connecting $v_j$ to $u$ such that the other vertices $v_{i}$ with $i \neq j$ does not appear on $\beta$. Without loss generality, we assume $j =1$.
 Note that $S$ is a clean almost fiber surface, so every piece of $S$ is neither an annulus or a disk. We thus choose a path $\gamma'$ connecting $\alpha_{1}(0)$ to $\alpha_{1}(1)$ with non-empty intersection  with $\mathcal{T}_{g}$ such that it can not be homotoped out of the geometrically finite piece, and the corresponding path of $\gamma' \subset S$ in $\Gamma(\mathcal{T}_{g})$ is the back-tracking path $\beta \cdot \beta^{-1}$. Let $\gamma$ be the concatenation of $\gamma'\cdot \alpha_{2}\cdots \alpha_{n}$. It follows that $w(\gamma) = w(\alpha)$. Since $w(\alpha) >1$, we obtain $w(\gamma) >1$.
\end{proof}

\section{Distortion of surfaces in non-geometric 3-manifolds}
\label{section:putting results}

In Section~\ref{sec:almostfiber}, we show that the distortion of a clean surface subgroup in a non-geometric $3$--manifold group can be determined by looking at the distortion of the clean almost fiber part. We recall that the almost fiber part contains only horizontal and geometrically infinite pieces.
The distortion of properly immersed $\pi_1$--injective horizontal surfaces in graph manifolds is computed in \cite{Hruska-Nguyen}. In the setting of mixed manifold, the distortion of a clean almost fiber part  is addressed in Section~\ref{sec:distortion mixed}. 
In this section, we compute the distortion of arbitrary clean surface in a non-geometric $3$--manifold by putting the previous results together.

\begin{lem}
\label{lem:distortioningraphmanifold}
Let $S$ be a clean almost fiber surface in a  graph manifold $N$. Let $\Delta$ be the distortion of $\pi_1(S)$ in $\pi_1(N)$. If $S$ contains only one horizontal piece then $\Delta$ is linear. If $S$ contains at least two horizontal pieces, then $\Delta$ is quadratic if the spirality of $S$ is trivial, otherwise it is exponential.
\end{lem}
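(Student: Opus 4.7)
The plan is to reduce each case to previously established results, using Remark~\ref{rem:SUNLIU} to translate separability into triviality of spirality.

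For the single horizontal piece case, the surface $S$ itself is this piece and is mapped into a single Seifert fibered block $M$ of $N$. By Remark~\ref{rem:distortionwellknow}, the distortion of $\pi_1(S)$ in $\pi_1(M)$ is linear. Since $M$ is a JSJ block, $\pi_1(M)$ is undistorted in $\pi_1(N)$, and Proposition~\ref{prop:distortion} gives that $\Delta$ is linear.

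For the case of at least two horizontal pieces, the plan is to appeal to the main distortion theorem of Hruska--Nguyen in \cite{Hruska-Nguyen}, which establishes that a properly immersed $\pi_1$--injective horizontal surface in a graph manifold has quadratic distortion when its fundamental group is separable in $\pi_1(N)$ and exponential distortion otherwise. Combining this dichotomy with the equivalence in Remark~\ref{rem:SUNLIU} (which says that $\pi_1(S)$ is separable in $\pi_1(N)$ if and only if the spirality of $S$ is trivial) will immediately yield the desired conclusion, since triviality/nontriviality of spirality is precisely the hypothesis we want to track.

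The main obstacle I anticipate is that a clean almost fiber surface is slightly more general than a properly immersed horizontal surface: its boundary circles may be mapped into JSJ tori rather than into $\partial N$. To bridge this gap, I would verify that the Hruska--Nguyen arguments use only those features of $S$ which are already built into Definition~\ref{defn:clean surface}: namely that each piece is an essential horizontal subsurface of its Seifert block, that adjacent pieces meet along essential curves which lift to lines in JSJ planes, that the dual tree map $\zeta$ of Definition~\ref{defn:map of trees} is injective (which we have by Proposition~\ref{prop:TreeBijective}), and that $g$ lifts to an embedding in the cover corresponding to $\pi_1(S)$. With these ingredients in place, the geometric estimates in \cite{Hruska-Nguyen} go through unchanged for clean surfaces, so the quadratic-versus-exponential dichotomy transfers directly; substituting separability by spirality via Remark~\ref{rem:SUNLIU} then completes the lemma.
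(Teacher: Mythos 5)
Your proposal is correct and follows essentially the same approach as the paper: for a single horizontal piece, combine Remark~\ref{rem:distortionwellknow} with Proposition~\ref{prop:distortion} and the undistortedness of a block in $\pi_1(N)$; for multiple pieces, invoke the Hruska--Nguyen dichotomy and observe that the argument carries over to clean almost fiber surfaces. The only slight difference is bookkeeping: the paper takes the Hruska--Nguyen result directly in terms of the Rubinstein--Wang spirality (so no translation is needed inside the lemma, and the separability reformulation is deferred to the proof of Theorem~\ref{thm:intro:graphmanifold} via Remark~\ref{rem:SUNLIU}), whereas you import the separability formulation and translate immediately; both orderings are equivalent since for horizontal surfaces in graph manifolds the two invariants coincide by Rubinstein--Wang. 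Your checklist of the Definition~\ref{defn:clean surface} features needed for the Hruska--Nguyen estimates to transfer (essential pieces, essential lifted curves, injectivity of $\zeta$ from Proposition~\ref{prop:TreeBijective}, and the embedding lift) is more explicit than the paper's one-line assertion that the proof ``still holds,'' and is a reasonable elaboration rather than a divergence.
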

\begin{proof}
The fundamental group of a Seifert fibered block in $\pi_1(N)$ is undistorted. If $S$ contains only one horizontal piece then $\Delta$ is linear by Remark~\ref{rem:distortionwellknow} and Proposition~\ref{prop:distortion}.
We now consider the case $S$ has at least two horizontal pieces. We remark that the main theorem in \cite{Hruska-Nguyen} states for properly immersed $\pi_1$--injective, horizontal surfaces. However, the proof of the main theorem in \cite{Hruska-Nguyen} still hold for clean almost fiber surfaces.
\end{proof}

\begin{proof}[Proof of Theorem~\ref{thm:intro:graphmanifold}]
The proof is a combination of Lemma~\ref{lem:distortioningraphmanifold}, Theorem~\ref{thm:distortionalmostfiber}, and (1) in Remark~\ref{rem:SUNLIU}.
\end{proof}


\begin{proof}[Proof of Theorem~\ref{thm:thesismain}]
If $\Phi(S)$ is empty then the distortion of $\pi_1(S)$ in $\pi_1(N)$ is linear by Corollary~\ref{cor:linear distortion}. We now assume that $\Phi(S)$ is non-empty. By Theorem~\ref{thm:distortionalmostfiber}, it is suffice to compute the distortion of each component of the almost fiber part $\Phi(S)$ in $N$.

Let $N'$ be a submanifold of $N$ such that the restriction $g|_{S'} \colon S' \looparrowright N'$ is a clean almost fiber surface. Note that $\pi_1(N')$ is undistorted in $\pi_1(N)$, thus the distortion of $\pi_1(S')$ in $\pi_1(N)$ is equivalent to the distortion of $\pi_1(S')$ in $\pi_1(N')$. To compute the distortion of $\pi_1(S')$ in $\pi_1(N')$, we note that the distortions of clean almost fiber surfaces in mixed manifolds, graph manifolds, Seifert fibered spaces and hyperbolic spaces are addressed in Theorem~\ref{thm:introductionUpperBound}, Theorem~\ref{thm:intro:graphmanifold} and Remark~\ref{rem:distortionwellknow} respectively. The proof of the theorem follows easily by combining these results together with (1) in Remark~\ref{rem:SUNLIU}.
\end{proof}

\bibliographystyle{alpha}
\bibliography{hoang}

\begin{thebibliography}{PW14b}

\bibitem[{Ago}]{Agol04}
I.~{Agol}.
\newblock {Tameness of hyperbolic 3-manifolds}.
\newblock arXiv:math/0405568.

\bibitem[BN08]{NeuBeh08}
Jason~A. Behrstock and Walter~D. Neumann.
\newblock Quasi-isometric classification of graph manifold groups.
\newblock {\em Duke Math. J.}, 141(2):217--240, 2008.

\bibitem[Bon86]{Bonahon86}
Francis Bonahon.
\newblock Bouts des vari\'{e}t\'{e}s hyperboliques de dimension {$3$}.
\newblock {\em Ann. of Math. (2)}, 124(1):71--158, 1986.

\bibitem[Bow93]{Bowditch93}
B.~H. Bowditch.
\newblock Geometrical finiteness for hyperbolic groups.
\newblock {\em J. Funct. Anal.}, 113(2):245--317, 1993.

\bibitem[Can96]{Canary96}
Richard~D. Canary.
\newblock A covering theorem for hyperbolic {$3$}-manifolds and its
  applications.
\newblock {\em Topology}, 35(3):751--778, 1996.

\bibitem[CG06]{CalegariGabai06}
Danny Calegari and David Gabai.
\newblock Shrinkwrapping and the taming of hyperbolic 3-manifolds.
\newblock {\em J. Amer. Math. Soc.}, 19(2):385--446, 2006.

\bibitem[FLP12]{FLP}
Albert Fathi, Fran\c{c}ois Laudenbach, and Valentin Po\'enaru.
\newblock {\em Thurston's work on surfaces}, volume~48 of {\em Mathematical
  Notes}.
\newblock Princeton University Press, Princeton, NJ, 2012.
\newblock Translated from the 1979 French original by Djun M. Kim and Dan
  Margalit.

\bibitem[Has87]{Hass87}
Joel Hass.
\newblock Minimal surfaces in manifolds with {$S^1$} actions and the simple
  loop conjecture for {S}eifert fibered spaces.
\newblock {\em Proc. Amer. Math. Soc.}, 99(2):383--388, 1987.

\bibitem[HK09]{HK09}
G.~Christopher Hruska and Bruce Kleiner.
\newblock Erratum to: ``{H}adamard spaces with isolated flats'' [{G}eom.
  {T}opol. {\bf 9} (2005), 1501--1538; mr2175151].
\newblock {\em Geom. Topol.}, 13(2):699--707, 2009.

\bibitem[HN19]{Hruska-Nguyen}
G.~Christopher Hruska and Hoang~Thanh Nguyen.
\newblock Distortion of surfaces in graph manifolds.
\newblock {\em Algebr. Geom. Topol.}, 19(1):363--395, 2019.

\bibitem[HP15]{HagenPrzytycki15}
M.F. Hagen and P.~Przytycki.
\newblock Cocompactly cubulated graph manifolds.
\newblock {\em Israel J. Math.}, 207(1):377--394, 2015.

\bibitem[Hru10]{Hruska2010}
G.~Christopher Hruska.
\newblock Relative hyperbolicity and relative quasiconvexity for countable
  groups.
\newblock {\em Algebr. Geom. Topol.}, 10(3):1807--1856, 2010.

\bibitem[KL98]{KL98}
M.~Kapovich and B.~Leeb.
\newblock {$3$}-manifold groups and nonpositive curvature.
\newblock {\em Geom. Funct. Anal.}, 8(5):841--852, 1998.

\bibitem[Lee95]{Leeb95}
Bernhard Leeb.
\newblock {$3$}-manifolds with(out) metrics of nonpositive curvature.
\newblock {\em Invent. Math.}, 122(2):277--289, 1995.

\bibitem[Liu17]{YiLiu2017}
Yi~Liu.
\newblock A characterization of virtually embedded subsurfaces in 3-manifolds.
\newblock {\em Trans. Amer. Math. Soc.}, 369(2):1237--1264, 2017.

\bibitem[PW14a]{PrzytyckiWise14}
Piotr Przytycki and Daniel~T. Wise.
\newblock Graph manifolds with boundary are virtually special.
\newblock {\em J. Topol.}, 7(2):419--435, 2014.

\bibitem[PW14b]{PW14}
Piotr Przytycki and Daniel~T. Wise.
\newblock Separability of embedded surfaces in 3-manifolds.
\newblock {\em Compos. Math.}, 150(9):1623--1630, 2014.

\bibitem[RW98]{RW98}
J.~Hyam Rubinstein and Shicheng Wang.
\newblock {$\pi_1$}-injective surfaces in graph manifolds.
\newblock {\em Comment. Math. Helv.}, 73(4):499--515, 1998.

\bibitem[Sco78]{Scott78}
Peter Scott.
\newblock Subgroups of surface groups are almost geometric.
\newblock {\em J. London Math. Soc. (2)}, 17(3):555--565, 1978.

\bibitem[Sta83]{Stallings83}
J.R. Stallings.
\newblock Topology of finite graphs.
\newblock {\em Invent. Math.}, 71(3):551--565, 1983.

\bibitem[Sun]{Sun18}
H.~Sun.
\newblock A characterization on separable subgroups of 3-manifold groups.
\newblock Preprint. arXiv:1805.08580 [math.GT].

\bibitem[Thu79]{Thurston79}
W.P. Thurston.
\newblock {\em The geometry and topology of 3-manifolds,}.
\newblock Princeton Lecture Notes, 1979.

\bibitem[Tid18]{Tidmore16}
Joseph Tidmore.
\newblock Cocompact cubulations of mixed 3-manifolds.
\newblock {\em Groups Geom. Dyn.}, 12(4):1429--1460, 2018.

\bibitem[Woo16]{Woodhouse16}
D.J. Woodhouse.
\newblock Classifying finite dimensional cubulations of tubular groups.
\newblock {\em Michigan Math. J.}, 65(3):511--532, 2016.

\end{thebibliography}
\end{document}